\newtheorem{theorem}{Theorem}[chapter]
\newtheorem{corollary}[theorem]{Corollary}
\newtheorem{lemma}[theorem]{Lemma}
\newtheorem{proposition}[theorem]{Proposition}
\newtheorem{conjecture}[theorem]{Conjecture}
\theoremstyle{definition}
\newtheorem{example}[theorem]{Example}
\newcommand{\C}{{\mathbb C}}
\newcommand{\Cd}{{\C^d}}
\newcommand{\N}{{\mathbb N}}
\newcommand{\R}{{\mathbb R}}
\newcommand{\Rd}{{\R^d}}
\newcommand{\T}{{\mathbb T}}
\newcommand{\Z}{{\mathbb Z}}
\newcommand{\e}{{\varepsilon}}
\newcommand{\lam}{{\lambda}}
\newcommand{\Interior}{\operatorname{Int}}
\newcommand{\res}{\operatorname{res}}
\newcommand{\sech}{\operatorname{sech}}
\newcommand{\spec}{\operatorname{spec}}
\newcommand{\Ran}{\operatorname{Ran}}
\renewcommand{\Re}{\operatorname{Re}}
\renewcommand{\Im}{\operatorname{Im}}
\newcommand{\LHP}{\text{LHP}}
\newcommand{\RHP}{\text{RHP}}
\newcommand{\la}{\langle}
\newcommand{\ra}{\rangle}
\newcommand{\lv}{\lVert}
\newcommand{\rv}{\rVert}
\title{\textbf{Spectral Theory of Partial Differential Equations} \\ \ \\ \textbf{Lecture Notes} \\ \ \\ University of Illinois \\ at Urbana--Champaign}
\author{Richard S. Laugesen
\footnote{Copyright \copyright\ 2011, Richard S. Laugesen
(Laugesen@illinois.edu). This work is licensed under the Creative
Commons Attribution--Noncommercial--Share Alike 3.0 Unported
License. To view a copy of this license, visit
\protect\url{http://creativecommons.org/licenses/by-nc-sa/3.0/}.} }
\begin{document}

\maketitle

\section*{Preface}

A \emph{textbook} presents far more material than any professor can cover in class. These \emph{lecture notes} present only somewhat more than I covered during the half-semester course Spectral Theory of Partial Differential Equations (Math 595 STP) at the University of Illinois, Urbana--Champaign, in Fall 2011.

\vspace{6pt} \noindent I make no claims of originality for the material presented other than some originality of emphasis: I emphasize computable examples before developing the general theory. This approach leads to occasional redundancy, and sometimes we use ideas before they are properly defined, but I think students gain a better understanding of the purpose of a theory after they are first well grounded in specific examples.

Please email me with corrections, and suggested improvements.

\vspace{24pt} \noindent Richard S. Laugesen \qquad \qquad \qquad
\textsc{Email:} Laugesen\@@illinois.edu
\\ Department of Mathematics \\ University of Illinois at Urbana--Champaign, U.S.A.

\newpage
\section*{Prerequisites and notation}

We assume familiarity with elementary Hilbert space theory: inner product, norm, Cauchy--Schwarz, orthogonal complement, Riesz Representation Theorem, orthonormal basis (ONB), bounded operators, and compact operators. Our treatment of discrete spectra builds on the spectral theorem for compact, selfadjoint operators.

All functions are assumed to be measurable. We use the function spaces
\begin{align*}
L^1 & = \text{integrable functions,} \\
L^2 & = \text{square integrable functions,} \\
L^\infty & = \text{bounded functions,}
\end{align*}
but we have no need of general $L^p$ spaces.

Sometimes we employ the $L^2$-theory of the Fourier transform,
\[
\widehat{f}(\xi) = \int_\Rd f(x) e^{-2\pi i \xi \cdot x} \, dx .
\]
Only the basic facts are needed, such as that the Fourier transform preserves the $L^2$ norm and maps derivatives in the spatial domain to multipliers in the frequency domain.

We use the language of Sobolev spaces throughout. Readers unfamiliar with this language can proceed unharmed: we mainly need that
\begin{align*}
H^1 = W^{1,2} & = \{ \text{$L^2$-functions with $1$ derivative in $L^2$} \} , \\
H^2 = W^{2,2} & = \{ \text{$L^2$-functions with $2$ derivatives in $L^2$} \} ,
\end{align*}
and
\[
H^1_0 = W^{1,2}_0 = \{ \text{$H^1$-functions that equal zero on the boundary} \} .
\]
(These characterizations are not mathematically precise, but they are good enough for our purposes.) Later we will recall the standard inner products that make these spaces into Hilbert spaces.

For more on Sobolev space theory, and related concepts of weak solutions and elliptic regularity, see \cite{E}.

\newpage
\section*{Introduction}

Spectral methods permeate the theory of partial differential equations. One solves linear PDEs by separation of variables, getting eigenvalues when the spectrum is discrete and continuous spectrum when it is not. Linearized stability of a steady state or traveling wave of a nonlinear PDE depends on the sign of the first eigenvalue, or on the location of the continuous spectrum in the complex plane.

This minicourse aims at highlights of spectral theory for \textbf{selfadjoint} partial differential operators, with a heavy emphasis on problems with \textbf{discrete spectrum}.

\medskip
\emph{Style of the course.} Research work differs from standard course work. Research often starts with questions motivated by analogy, or by trying to generalize special cases. Normally we find answers in a nonlinear fashion, slowly developing a coherent theory by linking up and extending our scraps of known information. We cannot predict what we will need to know in order to succeed, and we certainly do not have enough time to study all relevant background material. To succeed in research, we must develop a rough mental map of the surrounding mathematical landscape, so that we know the key concepts and canonical examples (without necessarily knowing the proofs). Then when we need to learn more about a topic, we know where to begin.

This course aims to develop your mental map of spectral theory in partial differential equations. We will emphasize computable examples, and will be neither complete in our coverage nor completely rigorous in our approach. Yet you will finish the course having a much better appreciation of the main issues and techniques in the subject.

\medskip
\emph{Closing thoughts.} If the course were longer, then we could treat topics such as nodal patterns, geometric bounds for the first eigenvalue and the spectral gap, majorization techniques (passing from eigenvalue sums to spectral zeta functions and heat traces), and inverse spectral problems. And we could investigate more deeply the spectral and scattering theory of operators with continuous spectrum, giving applications to stability of traveling waves and similarity solutions. These fascinating topics must await another course\ldots

\tableofcontents

\part{Discrete Spectrum}
\label{part:discrete}

\chapter{ODE preview}

\subsubsection*{Goal}

To review the role of eigenvalues and eigenvectors in solving 1st and 2nd order systems of linear ODEs; to interpret eigenvalues as decay rates, frequencies, and stability indices; and to observe formal analogies with PDEs.

\subsubsection*{Notational convention} Eigenvalues are written with multiplicity, and are listed in increasing order (when real-valued):
\[
\lam_1 \leq \lam_2 \leq \lam_3 \leq \cdots
\]

\subsubsection*{Spectrum of a real symmetric matrix}
If $A$ is a real symmetric $d \times d$ matrix (\emph{e.g.}\, $A = \left[ \begin{smallmatrix} a & b \\ b & c \end{smallmatrix} \right]$ when $d=2$) or Hermitian matrix then its \emph{spectrum} is the collection of eigenvalues:
\[
\spec(A) = \{ \lam_1,\ldots,\lam_d \} \subset \R
\]
\begin{figure}
\begin{center}
\includegraphics[width=0.4\textwidth]{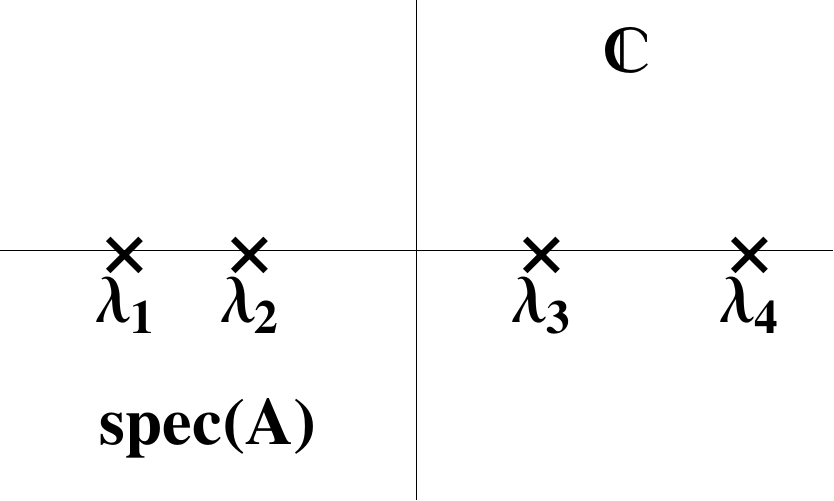}
\end{center}
\end{figure}
(see the figure). Recall that
\[
A v_j = \lam_j v_j
\]
where the eigenvectors $\{ v_1,\ldots,v_d\}$ can be chosen to form an ONB for $\Rd$.

Observe $A : \Rd \to \Rd$ is diagonal with respect to the eigenbasis:
\begin{align*}
A \big( \sum c_j v_j \big) & = \sum \lam_j c_j v_j \\ \ \\
\left[ \begin{matrix} \lam_1 & & 0 \\ & \ddots & \\ 0 & & \lam_d \end{matrix} \right] \! \left[ \begin{matrix} c_1 \\ \vdots \\ c_d \end{matrix}\right] & = \left[ \begin{matrix} \lam_1 c_1 \\ \vdots \\ \lam_d c_d \end{matrix}\right]
\end{align*}

\subsection*{What does the spectrum tell us about linear ODEs?}
\begin{example}[1st order] \label{ex1-1} The equation
\begin{align*}
\frac{dv}{dt} & = -Av \\
v(0)& = \sum c_j v_j
\end{align*}
has solution
\[
v(t) = e^{-At}v(0) \overset{\text{def}}{=} \sum e^{-\lam_j t} c_j v_j .
\]
Notice $\lam_j=$\textbf{decay rate} of the solution in direction $v_j$ if $\lam_j>0$, or \textbf{growth rate} (if $\lam_j<0$).

Long-time behavior: the solution is dominated by the first mode, with
\[
v(t) \sim e^{-\lam_1 t} c_1 v_1 \qquad \text{for large $t$,}
\]
assuming $\lam_1<\lam_2$ (so that the second mode decays faster than the first). The rate of collapse onto the first mode is governed by the \textbf{spectral gap} $\lam_2-\lam_1$ since
\begin{align*}
v(t)
& = e^{-\lam_1 t} \big( c_1 v_1 + \sum_{j=2}^d e^{-(\lam_j-\lam_1)t} c_j v_j \big) \\
& \sim e^{-\lam_1 t} \big( c_1 v_1 + O(e^{-(\lam_2-\lam_1)t}) .
\end{align*}
\end{example}
\begin{example}[2nd order] \label{ex1-2} Assume $\lam_1>0$, so that all the eigenvalues are positive. Then
\begin{align*}
\frac{d^2v}{dt^2} & = -Av \\
v(0)& = \sum c_j v_j \\
v^\prime(0)& = \sum c_j^\prime v_j
\end{align*}
has solution
\begin{align*}
v(t)
& = \cos(\sqrt{A}t)v(0) + \frac{1}{\sqrt{A}} \sin(\sqrt{A}t) v^\prime(0) \\
& \overset{\text{def}}{=} \sum \cos(\sqrt{\lam_j}t) c_j v_j + \sum \frac{1}{\sqrt{\lam_j}} \sin(\sqrt{\lam_j}t) c_j^\prime v_j .
\end{align*}
Notice $\sqrt{\lam_j}=$\textbf{frequency} of the solution in direction $v_j$.
\end{example}
\begin{example}[1st order imaginary] \label{ex1-3} The equation
\begin{align*}
i \frac{dv}{dt} & = Av \\
v(0)& = \sum c_j v_j
\end{align*}
has complex-valued solution
\[
v(t) = e^{-iAt}v(0) \overset{\text{def}}{=} \sum e^{-i\lam_j t} c_j v_j .
\]
This time $\lam_j=$\textbf{frequency} of the solution in direction $v_j$.
\end{example}

\subsection*{What does the spectrum tell us about nonlinear ODEs? In/stability!}
\begin{example}[1st order nonlinear] \label{ex1-4} Suppose
\[
\frac{dv}{dt} = F(v)
\]
where the vector field $F$ satisfies $F(0)=0$, with first order Taylor expansion
\[
F(v) = Bv + O(|v|^2)
\]
for some matrix $B$ having $d$ linearly independent eigenvectors $v_1,\ldots,v_d$ and corresponding eigenvalues $\lam_1,\ldots,\lam_d \in \C$. (The eigenvalues come in complex conjugate pairs, since $B$ is real.)

Clearly $v(t) \equiv 0$ is an equilibrium solution. Is it stable? To investigate, we linearize the ODE around the equilibrium to get $\frac{dv}{dt} = Bv$, which has solution
\[
v(t) = e^{Bt}v(0) = \sum e^{\lam_j t} c_j v_j .
\]
Notice $v(t) \to 0$ as $t \to \infty$ if $\Re(\lam_j)<0$ for all $j$, whereas $|v(t)| \to \infty$ if $\Re(\lam_j)>0$ for some $j$ (provided the corresponding coefficient $c_j$ is nonzero, and so on). Hence the equilibrium solution $v(t) \equiv 0$ is:
\begin{itemize}
\item \emph{linearly asymptotically stable} if $\spec(B) \subset \LHP$,
\begin{figure}[h]
\begin{center}
\includegraphics[width=0.4\textwidth]{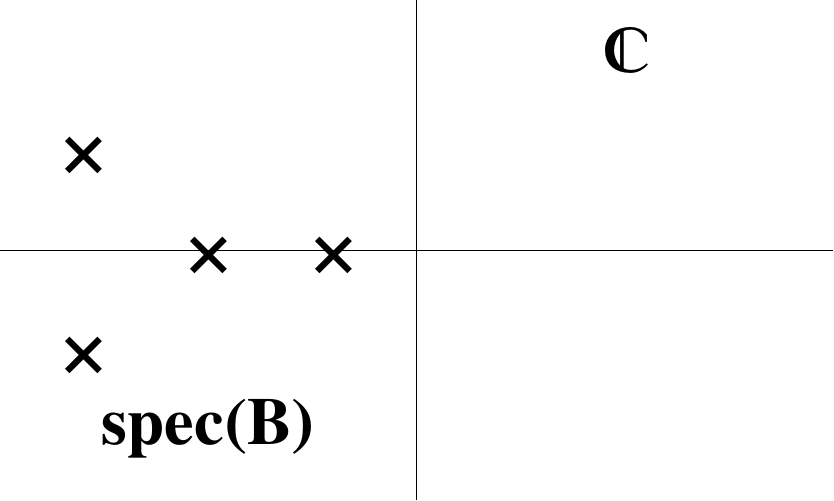}
\end{center}
\end{figure}
\item \emph{linearly unstable} if $\spec(B) \cap \RHP \neq \emptyset$.
\begin{figure}[h]
\begin{center}
\includegraphics[width=0.4\textwidth]{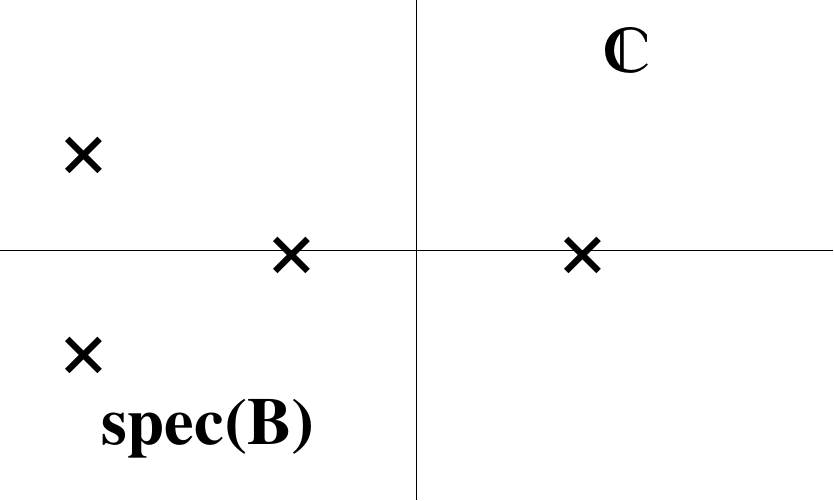}
\end{center}
\end{figure}
\end{itemize}
The \emph{Linearization Theorem} guarantees that the nonlinear ODE indeed behaves like the linearized ODE near the equilibrium solution, in the stable and unstable cases.

The nonlinear ODE's behavior requires further investigation in the neutrally stable case where the spectrum lies in the closed left half plane and intersects the imaginary axis ($\Re(\lam_j) \leq 0$ for all $j$ and $\Re(\lam_j)=0$ for some $j$).
\begin{figure}[h]
\begin{center}
\includegraphics[width=0.4\textwidth]{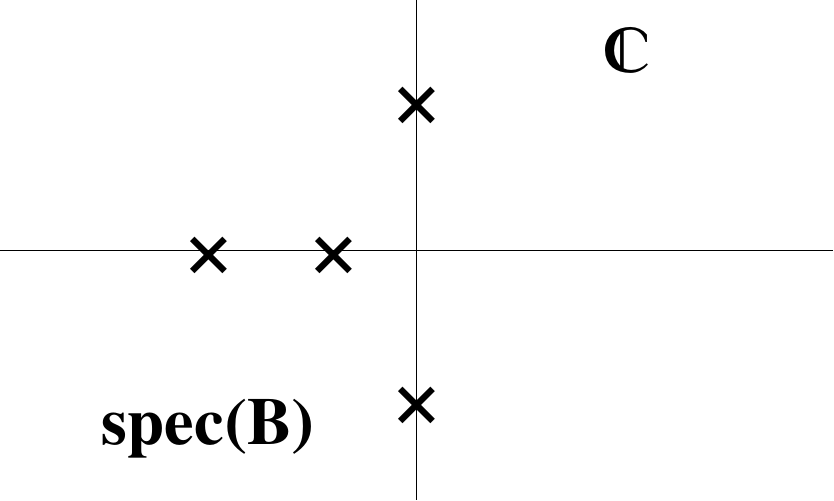}
\end{center}
\end{figure}

For example, if $B=\left[ \begin{smallmatrix} 0 & -1 \\ 1 & 0\end{smallmatrix} \right]$ (which has eigenvalues $\pm i$), then the phase portrait for $\frac{dv}{dt}=Bv$ consists of circles centered at the origin, but the phase portrait for the nonlinear system $\frac{dv}{dt} = F(v)$ might spiral in towards the origin (stability) or out towards infinity (instability), or could display even more complicated behavior.
\end{example}

\subsection*{Looking ahead to PDEs} Now suppose $A$ is an elliptic operator on a domain $\Omega \subset \Rd$. For simplicity, take $A=-\Delta$. Assume boundary conditions that make the operator self-adjoint (we will say more about boundary conditions later). Then the eigenvalues $\lam_j$ and eigenfunctions $v_j(x)$ of the Laplacian satisfy
\[
-\Delta v_j = \lam_j v_j \qquad \text{in $\Omega$}
\]
and the spectrum increases to infinity:
\[
\lam_1 \leq \lam_2 \leq \lam_3 \leq \cdots \to \infty .
\]
The eigenfunctions form an ONB for $L^2(\Omega)$.

Substituting $A=-\Delta$ into the ODE Examples~\ref{ex1-1}--\ref{ex1-3} transforms them into famous partial differential equations for the function $v(x,t)$. We solve these PDEs formally by separation of variables:
\begin{itemize}
\item Example \protect\ref{ex1-1} --- diffusion equation $v_t=\Delta v$. Separation of variables gives the solution
\[
v = e^{\Delta t}v(\cdot,0) \overset{\text{def}}{=} \sum e^{-\lam_j t} c_j v_j
\]
where the initial value is $v(\cdot,0) = \sum c_j v_j$. Here $\lam_j=$decay rate.

\item Example \protect\ref{ex1-2} --- wave equation $v_{tt}=\Delta v$. Separation of variables gives
\begin{align*}
v
& = \cos(\sqrt{-\Delta}t)v(\cdot,0) + \frac{1}{\sqrt{-\Delta}} \sin(\sqrt{-\Delta}t) v_t(\cdot,0) \\
& \overset{\text{def}}{=} \sum \cos(\sqrt{\lam_j}t) c_j v_j + \sum \frac{1}{\sqrt{\lam_j}} \sin(\sqrt{\lam_j}t) c_j^\prime v_j .
\end{align*}
So $\sqrt{\lam_j}=$frequency and $v_j=$mode of vibration.

\item Example \protect\ref{ex1-3} --- Schr\"{o}dinger equation $iv_t=-\Delta v$. Separation of variables gives
\[
v = e^{i\Delta t}v(\cdot,0) \overset{\text{def}}{=} \sum e^{-i\lam_j t} c_j v_j .
\]
Here $\lam_j=$frequency or energy level, and $v_j=$quantum state.
\end{itemize}

\medskip
We aim in what follows to analyze not just the Laplacian, but a whole family of related operators including:
\begin{align*}
A & = -\Delta && \text{Laplacian,} \\
A & = -\Delta + V(x) && \text{Schr\"{o}dinger operator,} \\
A & = (i\nabla+\vec{A})^2 && \text{magnetic Laplacian,} \\
A & = (-\Delta)^2= \Delta \Delta && \text{biLaplace operator.}
\end{align*}
The spectral theory of these operators helps explain the stability of different kinds of ``equilibria'' for evolution equations: steady states, standing waves, traveling waves, and similarity solutions.

\chapter{Laplacian --- computable spectra} \label{ch:lce}

\subsubsection*{Goal} To develop a library of explicitly computable spectra, which we use later to motivate and understand the general theory. The examples are classical and so proofs are left to the reader, or else omitted, except that Weyl's asymptotic law is proved in detail for rectangles.

\paragraph*{References} \cite{S} Chapters 4, 10; \cite{F} Lesson 30.

\subsubsection*{Notation} Let $\Omega$ be a bounded domain in $\Rd, d \geq 1$. Fix $L>0$.

Abbreviate ``boundary condition'' as ``BC'':
\begin{itemize}
\item \emph{Dirichlet} BC means $u=0$ on $\partial \Omega$,
\item \emph{Robin} BC means $\frac{\partial u}{\partial n} + \sigma u =  0$ on $\partial \Omega$ (where $\sigma \in \R$ is the Robin constant),
\item \emph{Neumann} BC means $\frac{\partial u}{\partial n}=0$ on $\partial \Omega$.
\end{itemize}

\subsection*{Spectra of the Laplacian}
\[
\Delta = \nabla \cdot \nabla = \Big( \frac{\partial\ \,}{\partial x_1} \Big)^{\! 2} + \cdots + \Big( \frac{\partial\ \,}{\partial x_d} \Big)^{\! 2}
\]
Eigenfunctions satisfy $\boxed{-\Delta u = \lam u}$, and we order the eigenvalues in increasing order as
\[
\lam_1 \leq \lam_2 \leq \lam_3 \leq \cdots \to \infty .
\]
To get an ONB one should normalize the eigenfunctions in $L^2$, but for simplicity, we will not normalize the following examples.

\paragraph*{One dimension} $\boxed{-u^{\prime \prime}=\lam u}$

\bigskip
\noindent 1. \emph{Circle} $\T = \R / 2\pi \Z$, periodic BC: $u(-\pi)=u(\pi),u^\prime(-\pi)=u^\prime(\pi)$.

Eigenfunctions $e^{ijx}$ for $j \in \Z$, or equivalently $1,\cos(jx),\sin(jx)$ for $j \geq 1$.

Eigenvalues $\lam_j=j^2$ for $j \in \Z$, or $\lam=0^2,1^2,1^2,2^2,2^2,\ldots$

\bigskip
\noindent 2. \emph{Interval} $(0,L)$

\medskip
\noindent (a) Dirichlet BC: $u(0)=u(L)=0$.

Eigenfunctions $u_j(x)=\sin(j\pi x/L)$ for $j \geq 1$.

Eigenvalues $\lam_j=(j\pi/L)^2$ for $j \geq 1$, \emph{e.g.}\ $L=\pi \ \Rightarrow \ \lam=1^2,2^2,3^2,\ldots$
\begin{figure}[h]
\begin{center}
\includegraphics[width=0.4\textwidth]{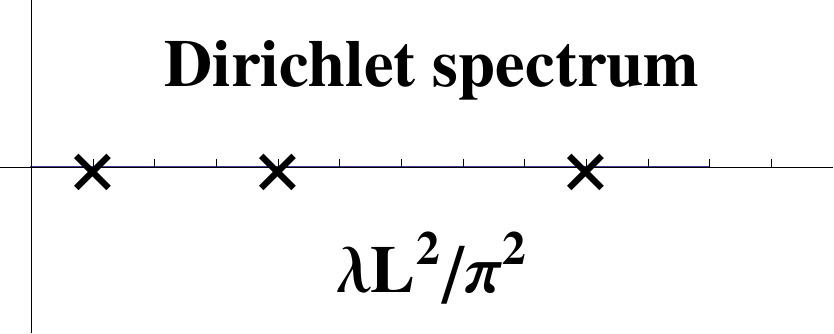} \qquad
\includegraphics[width=0.4\textwidth]{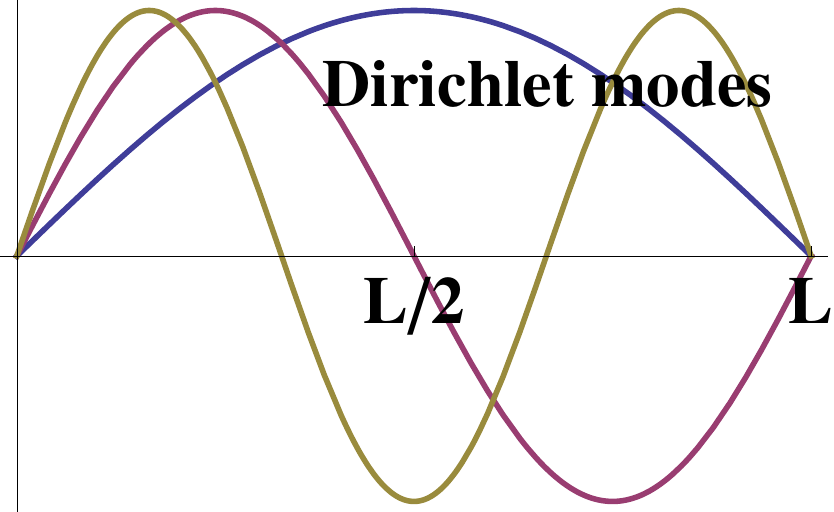}
\end{center}
\end{figure}

\medskip
\noindent (b) Robin BC: $-u^\prime(0)+\sigma u(0)=u^\prime(L)+\sigma u(L)=0$.

Eigenfunctions $u_j(x)=\sqrt{\rho_j} \cos(\sqrt{\rho_j}x)+\sigma \sin(\sqrt{\rho_j}x)$.

Eigenvalues $\rho_j=$ $j$th positive root of $\tan(\sqrt{\rho}L)=\frac{2\sigma \sqrt{\rho}}{\rho - \sigma^2}$ for $j \geq 1$.
\begin{figure}[h]
\begin{center}
\includegraphics[width=0.4\textwidth]{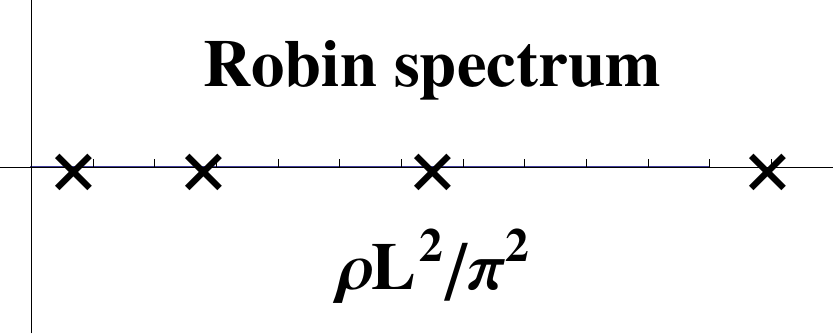} \qquad
\includegraphics[width=0.4\textwidth]{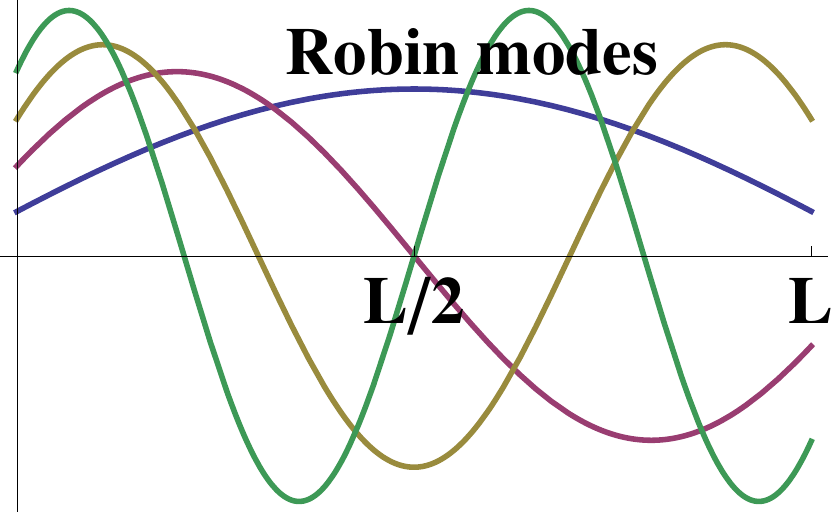}
\end{center}
\end{figure}

\medskip
\noindent (c) Neumann BC: $u^\prime(0)=u^\prime(L)=0$.

Eigenfunctions $u_j(x)=\cos(j\pi x/L)$ for $j \geq 0$ (note $u_0 \equiv 1$).

Eigenvalues $\mu_j=(j\pi/L)^2$ for $j \geq 0$, \emph{e.g.}\ $L=\pi \ \Rightarrow \ \lam=0^2,1^2,2^2,3^2,\ldots$
\begin{figure}[h]
\begin{center}
\includegraphics[width=0.4\textwidth]{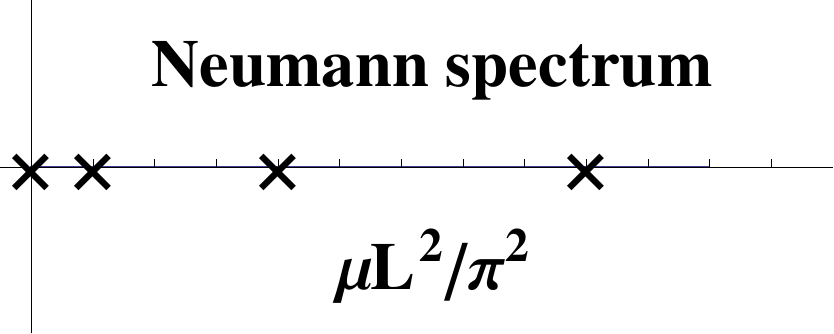} \qquad
\includegraphics[width=0.4\textwidth]{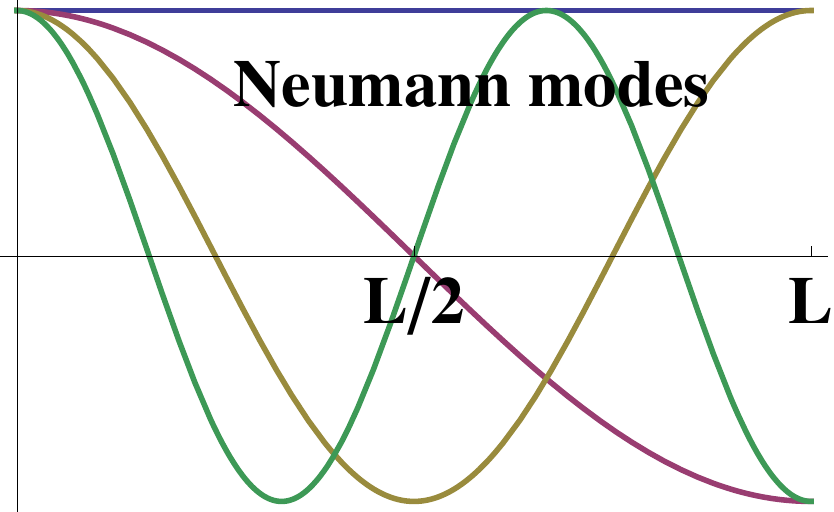}
\end{center}
\end{figure}

\paragraph*{Spectral features in $1$ dim} \

i. Scaling: eigenvalue must balance $d^2/dx^2$, and so $\lam \sim (\text{length scale})^{-2}$.

\qquad \qquad Precisely, $\lam_j \big( (0,tL) \big)=\lam_j \big( (0,L) \big) /t^2$.

ii. Asymptotic: eigenvalues grow at a regular rate, $\lam_j \sim (\text{const.})j^2$

iii. Robin spectrum lies between Neumann and Dirichlet:
\[
\text{Neumann} \xleftarrow{\ \sigma \to 0 \ } \text{Robin} \xrightarrow{\ \sigma \to \infty\ } \text{Dirichlet}
\]
as one sees formally by letting $\sigma$ approach $0$ or $\infty$ in the Robin BC $\tfrac{\partial u}{\partial n}+\sigma u=0$.

\paragraph*{Two dimensions} $\boxed{-\Delta u=\lam u}$

\bigskip
\noindent 1. \emph{Rectangle} $\Omega = (0,L) \times (0,M)$ (product of intervals).

Separate variables using rectangular coordinates $x_1,x_2$. See the figures at the end of the chapter!

(Note that every rectangle can be reduced to a rectangle with sides parallel to the coordinate axes because the Laplacian, and hence its spectrum, is rotationally and translationally invariant.)

\medskip
\noindent (a) Dirichlet BC: $u=0$

Eigenfunctions $u_{jk}(x)=\sin(j\pi x_1/L) \sin(k\pi x_2/M)$ for $j,k \geq 1$.

Eigenvalues $\lam_{jk}=(j\pi/L)^2+(k\pi/M)^2$ for $j,k \geq 1$,

\emph{e.g.}\ $L=M=\pi \ \Rightarrow \ \lam=2,5,5,8,10,10,\ldots$

\medskip
\noindent (b) Neumann BC: $\tfrac{\partial u}{\partial n}=0$

Eigenfunctions $u_{jk}(x)=\cos(j\pi x_1/L) \cos(k\pi x_2/M)$ for $j,k \geq 0$.

Eigenvalues $\mu_{jk}=(j\pi/L)^2+(k\pi/M)^2$ for $j,k \geq 0$,

\emph{e.g.}\ $L=M=\pi \ \Rightarrow \ \lam=0,1,1,2,4,4,\ldots$

\bigskip
\noindent 2. \emph{Disk} $\Omega = \{ x \in \R^2 : |x|<R \}$.

Separate variables using polar coordinates $r,\theta$.

\medskip
\noindent (a) Dirichlet BC: $u=0$

Eigenfunctions
\begin{align*}
& \text{$J_0(rj_{0,m}/R)$ for $m \geq 1$,} \\
& \text{$J_n(rj_{n,m}r/R) \cos (n\theta)$ and $J_n(rj_{n,m}r/R) \sin (n\theta)$ for $n \geq 1, m \geq 1$.}
\end{align*}
Notice the modes with $n=0$ are purely radial, whereas when $n \geq 1$ the modes have angular dependence.

Eigenvalues $\lam=(j_{n,m}/R)^2$ for $n \geq 0, m \geq 1$, where
\begin{align*}
J_n & = \text{Bessel function of order $n$, and} \\
j_{n,m} & = \text{$m$-th positive root of $J_n(r)=0$.}
\end{align*}
The eigenvalue $\lam_{n,m}$ has multiplicity $2$ when $n \geq 1$, associated to both cosine and sine modes.

From the graphs of the Bessel functions $J_0,J_1,J_2$ we can read off the first 4 roots:
\[
j_{0,1} \simeq 2.40, \quad j_{1,1} \simeq 3.83 \quad j_{2,1} \simeq 5.13 \quad j_{1,2} \simeq 5.52 .
\]
These roots generate the first 6 eigenvalues (remembering the eigenvalues are double when $n \geq 1$).
\begin{figure}[h]
\begin{center}
\includegraphics[width=0.5\textwidth]{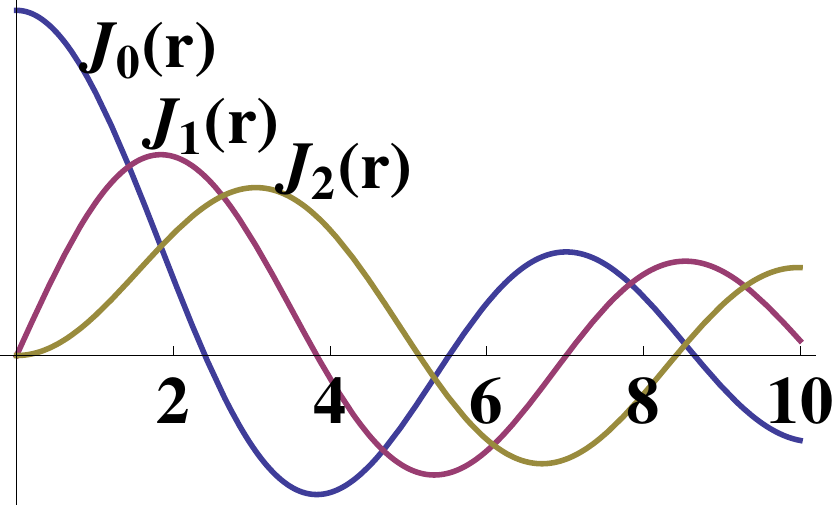}
\end{center}
\end{figure}

\medskip
\noindent (b) Neumann BC: $\tfrac{\partial u}{\partial n}=0$

Use roots of $J_n^\prime(r)=0$. See \cite[Chapter III]{B}.

\bigskip
\noindent 3. \emph{Equilateral triangle} of sidelength $L$.

Separation of variables fails, but one may reflect repeatedly to a hexagonal lattice whose eigenfunctions are trigonometric.

Dirichlet eigenvalues $\lam_{jk}=\tfrac{16\pi^2}{9L^2}(j^2+jk+k^2)$ for $j,k \geq 1$.

Neumann eigenvalues $\mu_{jk}=\tfrac{16\pi^2}{9L^2}(j^2+jk+k^2)$ for $j,k \geq 0$.

\noindent See \cite{MW,McC}.

\paragraph*{Spectral features in $2$ dim} \

\medskip
i. Scaling: eigenvalue must balance $\Delta$, and so $\lam \sim (\text{length scale})^{-2}$.

\qquad \qquad Precisely, $\lam_j(t\Omega)=\lam_j(\Omega)/t^2$.

\medskip
ii. Dirichlet and Neumann spectra behave quite differently when the domain degenerates. Consider the rectangle, for example. Fix one side length $L$, and let the other side length $M$ tend to $0$. Then the first positive Dirichlet eigenvalue blows up: taking $j=k=1$ gives eigenvalue $(\pi/L)^2+(\pi/M)^2 \to \infty$. The first positive Neumann eigenvalue is constant (independent of $M$): taking $j=1,k=0$, gives eigenvalue $(\pi/L)^2$.

\medskip
iii. Asymptotic: eigenvalues of the rectangle grow at a regular rate.
\begin{proposition}(Weyl's law for rectangles) \label{pr:rectweyl}
The rectangle $(0,L) \times (0,M)$ has
\[
\lam_j \sim \mu_j \sim \frac{4\pi j}{\text{Area}} \qquad  \text{as $j \to \infty$,}
\]
where $\text{Area}=LM$ is the area of the rectangle and $\lam_1,\lam_2,\lam_3,\ldots$ and $\mu_1,\mu_2,\mu_3,\ldots$ are the Dirichlet and Neumann eigenvalues respectively, in increasing order.
\end{proposition}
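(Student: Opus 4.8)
The plan is to count eigenvalues by counting lattice points in a planar region, which is the classical Gauss circle problem approach. Recall from the explicit formula that the Dirichlet eigenvalues are $\lam_{jk}=(j\pi/L)^2+(k\pi/M)^2$ for $j,k\geq 1$, and similarly the Neumann eigenvalues $\mu_{jk}$ with $j,k\geq 0$. Introduce the eigenvalue counting function $N(\lam)=\#\{j:\lam_j\leq\lam\}$; since $\lam_j\leq\lam$ exactly when $N(\lam)\geq j$, the asymptotic $\lam_j\sim 4\pi j/\text{Area}$ is equivalent to $N(\lam)\sim\text{Area}\cdot\lam/(4\pi)$ as $\lam\to\infty$. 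So the real content is proving this Weyl-type asymptotic for $N$.

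The key step is the geometric identification: $N_{\text{Dir}}(\lam)$ equals the number of integer pairs $(j,k)$ with $j,k\geq 1$ lying inside the ellipse $(j\pi/L)^2+(k\pi/M)^2\leq\lam$, i.e.\ inside $\{(x,y):x>0,\,y>0,\,x^2/a^2+y^2/b^2\leq 1\}$ after rescaling, where $a=L\sqrt{\lam}/\pi$ and $b=M\sqrt{\lam}/\pi$. This quarter-ellipse has area $\tfrac14\pi ab=\tfrac14\pi\cdot(LM\lam/\pi^2)=LM\lam/(4\pi)$. First I would state the standard lattice-point-counting estimate: the number of integer points in a nice planar region equals its area plus an error controlled by the perimeter, so $N_{\text{Dir}}(\lam)=\text{Area}\cdot\lam/(4\pi)+O(\sqrt{\lam})$, the error coming from the boundary strip of the quarter-ellipse (whose perimeter grows like $\sqrt{\lam}$) together with the two axis edges we exclude in the Dirichlet case. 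For Neumann one counts $j,k\geq 0$ instead, which adds the two axis edges back; this changes $N$ only by $O(\sqrt{\lam})$, so the leading term is identical. Dividing by $\lam$ and letting $\lam\to\infty$ gives $N(\lam)/\lam\to\text{Area}/(4\pi)$ in both cases, and inverting yields the claimed asymptotics for $\lam_j$ and $\mu_j$.

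To make the lattice-count rigorous without invoking heavy machinery, I would sandwich: cover the quarter-ellipse by unit squares with lower-left corners at the admissible lattice points. Each such square lies within distance $\sqrt{2}$ of the ellipse, so the union of squares is trapped between the quarter-ellipse shrunk and enlarged by $\sqrt 2$ in the radial direction; since the area of an elliptical annulus of width $\sqrt 2$ is $O(\sqrt\lam)$, the count and the area differ by $O(\sqrt\lam)$. This is an elementary comparison of areas, requiring only that the scaled ellipse has semi-axes of order $\sqrt\lam$.

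The main obstacle is bookkeeping rather than depth: one must be careful that the $O(\sqrt\lam)$ error genuinely is lower order, and that the boundary cases ($j$ or $k$ equal to $0$ or on the ellipse itself, eigenvalue multiplicities) do not secretly contribute to the leading term. None of this is hard, but it is where a sloppy argument would go wrong, so I would isolate the lattice-counting estimate as a clean sublemma before assembling the proof.
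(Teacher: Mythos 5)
Your proposal is correct and follows essentially the same route as the paper: identify $N(\alpha)$ with the number of lattice points in a quarter-ellipse of semi-axes $\sqrt{\alpha}L/\pi$, $\sqrt{\alpha}M/\pi$, sandwich the count between the ellipse's area and the area minus an $O(\sqrt{\alpha})$ boundary/perimeter correction, and then invert the asymptotic $N(\alpha)\sim \text{Area}\cdot\alpha/(4\pi)$ to get $\lam_j \sim 4\pi j/\text{Area}$ (the paper isolates this inversion, including the multiple-eigenvalue subtlety you flag, as a separate lemma). The only cosmetic difference is that the paper attaches unit squares by their upper-right corners so they lie inside the ellipse, rather than your lower-left-corner/$\sqrt{2}$-annulus bookkeeping; both yield the same $O(\sqrt{\alpha})$ error.
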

\begin{proof} We give the proof for Dirichlet eigenvalues. (The Neumann case is similar.) Define for $\alpha>0$ the eigenvalue counting function
\begin{align*}
N(\alpha)
& = \# \{ \text{eigenvalues} \leq \alpha \} \\
& = \# \big\{ j,k \geq 1 : \frac{j^2}{\alpha L^2/\pi^2} + \frac{k^2}{\alpha M^2/\pi^2} \leq 1 \big\} \\
& = \# \big\{ (j,k) \in \N \times \N : (j,k) \in E \big\}
\end{align*}
where $E$ is the ellipse $(x/a)^2+(y/b)^2 \leq 1$ and $a=\sqrt{\alpha}L/\pi, b=\sqrt{\alpha}M/\pi$.
\begin{figure}[h]
\begin{center}
\includegraphics[width=0.4\textwidth]{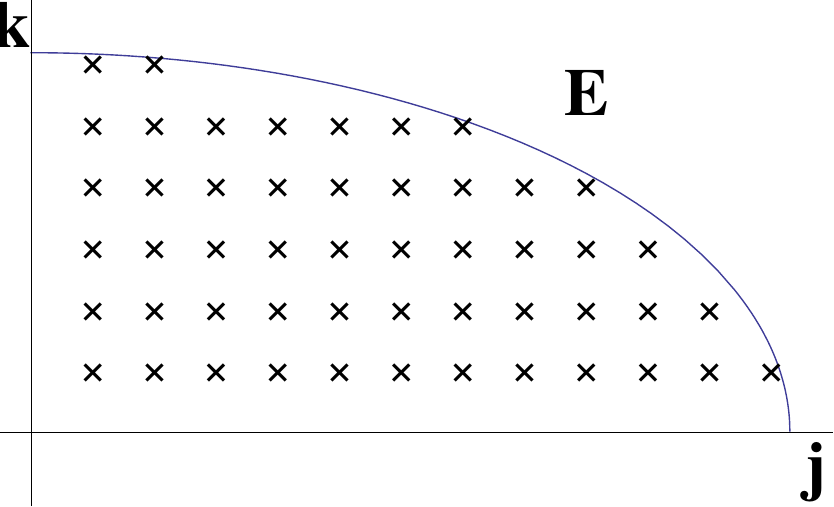} \qquad
\includegraphics[width=0.4\textwidth]{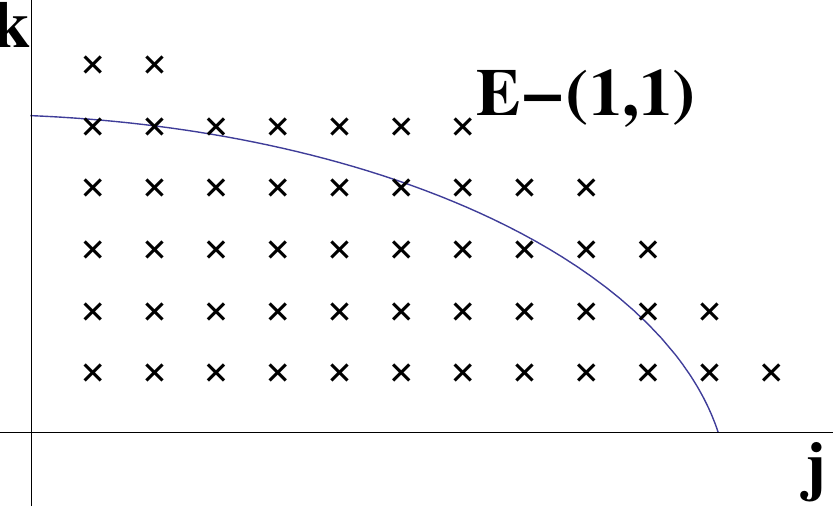}
\end{center}
\end{figure}

We associate each lattice point $(j,k) \in E$ with the square
\[
S(j,k)=[j-1,j] \times [k-1,k]
\]
whose upper right corner lies at $(j,k)$. These squares all lie within $E$, and so by comparing areas we find
\[
N(\alpha) \leq (\text{area of $E$ in first quadrant}) = \frac{1}{4} \pi ab = \frac{\text{Area}}{4\pi} \alpha .
\]
On the other hand, a little thought shows that the union of the squares covers a copy of $E$ shifted down and left by one unit:
\[
\cup_{(j,k) \in E} \, S(j,k) \supset \big( E - (1,1) \big) \cap (\text{first quadrant}) .
\]
Comparing areas shows that
\begin{align*}
N(\alpha)
& \geq \frac{1}{4} \pi ab - a - b \\
& = \frac{LM}{4\pi} \alpha - \frac{L+M}{\pi} \sqrt{\alpha} \\
& = \frac{\text{Area}}{4\pi} \alpha - \frac{\text{Perimeter}}{2\pi} \sqrt{\alpha} .
\end{align*}
Combining our upper and lower estimates shows that
\[
N(\alpha) \sim \frac{\text{Area}}{4\pi} \alpha
\]
as $\alpha \to \infty$. To complete the proof we simply invert this last asymptotic, with the help of the lemma below.
\end{proof}
\begin{lemma}(Inversion of asymptotics) \label{le:inversion} Fix $c>0$. Then:
\[
N(\alpha) \sim \frac{\alpha}{c} \qquad \Longrightarrow \qquad \lam_j \sim cj .
\]
\end{lemma}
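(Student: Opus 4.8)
The plan is to extract from the hypothesis $N(\alpha) \sim \alpha/c$ the precise counting behavior and then read off the asymptotics of the individual eigenvalues by applying $N$ to values near $\lam_j$. First I would record the elementary but crucial observation that, because $N(\alpha)$ counts eigenvalues (with multiplicity) that are $\leq \alpha$, we have for each $j$ the sandwiching
\[
N(\lam_j - \e) < j \leq N(\lam_j)
\]
for every sufficiently small $\e>0$; indeed $N(\lam_j) \geq j$ since $\lam_1,\ldots,\lam_j$ are all $\leq \lam_j$, while $N(\lam_j-\e) \leq j-1$ once $\e$ is small enough that no eigenvalue lies in $(\lam_j-\e,\lam_j)$. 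Letting $\e \to 0$ and using that $N$ is nondecreasing and right-continuous gives $N(\lam_j^-) \leq j \leq N(\lam_j)$, which is the key two-sided bound.

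Next I would feed the hypothesis $N(\alpha)/\alpha \to 1/c$ into these inequalities. From $j \leq N(\lam_j)$ and $N(\lam_j) = \lam_j/c + o(\lam_j)$ we get $j \leq \lam_j/c + o(\lam_j)$, hence $\liminf_j \lam_j/(cj) \geq 1$. For the reverse inequality I would use $j \geq N(\lam_j^-)$ together with the fact that $N(\lam_j^-) = \lam_j/c + o(\lam_j)$ as well — the one-sided limit has the same asymptotics because $N(\alpha^-)/\alpha$ and $N(\alpha)/\alpha$ have the same limit (the jump at any point is finite, and more to the point $\alpha \mapsto N(\alpha^-)$ also satisfies the same $\sim \alpha/c$ relation). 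This yields $j \geq \lam_j/c + o(\lam_j)$, hence $\limsup_j \lam_j/(cj) \leq 1$. Combining, $\lam_j/(cj) \to 1$, i.e. $\lam_j \sim cj$. One small point worth spelling out: all these estimates presuppose $\lam_j \to \infty$, which follows because if the $\lam_j$ were bounded then $N$ would be bounded, contradicting $N(\alpha) \sim \alpha/c \to \infty$.

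The only mildly delicate step is handling the $o(\lam_j)$ error terms cleanly — one must be careful that ``$N(\lam_j) = \lam_j/c + o(\lam_j)$'' is legitimate, since $\lam_j$ depends on $j$; this is fine because $\lam_j \to \infty$, so substituting $\alpha = \lam_j$ into the statement $N(\alpha) = \alpha/c + o(\alpha)$ as $\alpha \to \infty$ is valid. I would phrase the argument so as to avoid any appeal to continuity of $N$ that it does not have: $N$ is only right-continuous and integer-valued, but that is exactly why the two separate one-sided bounds $N(\lam_j^-) \leq j \leq N(\lam_j)$ are the right tool. I expect the write-up to be short — perhaps half a page — with the ``main obstacle'' being merely the bookkeeping of which inequality is strict and the verification that the left-limit function inherits the asymptotic, rather than any real mathematical difficulty.
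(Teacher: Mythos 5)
Your proof is correct and is essentially the paper's argument in different notation: the two-sided bound $N(\lam_j^-)\leq j\leq N(\lam_j)$ is exactly the paper's substitution of $\alpha=\lam_j$ into the upper estimate and $\alpha=\lam_j-\delta$ (then $\delta\to 0$) into the lower one, and your $o(\lam_j)$ bookkeeping replaces the paper's $(1\pm\e)$ inequalities. The only blemish is the throwaway justification that ``if the $\lam_j$ were bounded then $N$ would be bounded'' --- in fact $N$ would be infinite at a finite point --- but the needed fact $\lam_j\to\infty$ does follow from $N(\alpha)\sim\alpha/c$ forcing $N(\alpha)<\infty$ for all $\alpha$.
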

\begin{proof}
Formally substituting $\alpha=\lam_j$ and $N(\alpha)=j$ takes us from the first asymptotic to the second. The difficulty with making this substitution rigorous is that if $\lam_j$ is a multiple eigenvalue, then $N(\lam_j)$ can exceed $j$.

To circumvent the problem, we argue as follows. Given $\e>0$ we know from $N(\alpha) \sim \alpha/c$ that
\[
(1-\e) \frac{\alpha}{c} < N(\alpha) < (1+\e) \frac{\alpha}{c}
\]
for all large $\alpha$. Substituting $\alpha=\lam_j$ into the right hand inequality implies that
\[
j < (1+\e) \frac{\lam_j}{c}
\]
for all large $j$. Substituting $\alpha=\lam_j-\delta$ into the left hand inequality implies that
\[
(1-\e) \frac{\lam_j-\delta}{c} < j
\]
for each large $j$ and $0<\delta<1$, and hence (by letting $\delta \to 0$) that
\[
(1-\e) \frac{\lam_j}{c} \leq j .
\]
We conclude that
\[
\frac{1}{1+\e} < \frac{\lam_j}{cj} \leq \frac{1}{1-\e}
\]
for all large $j$, so that
\[
\lim_{j \to \infty} \frac{\lam_j}{cj}  = 1
\]
as desired.
\end{proof}

Later, in Chapter~\ref{ch:weyl}, we will prove \textbf{Weyl's Asymptotic Law} that
\[
\lam_j \sim 4\pi j/\text{Area}
\]
for all bounded domains in $2$ dimensions, regardless of shape or boundary conditions.

\paragraph*{Question to ask yourself} What does a ``typical'' eigenfunction look like, in each of the examples above? See the following figures.

\begin{figure}[h]
\begin{center}
\includegraphics[width=0.4\textwidth]{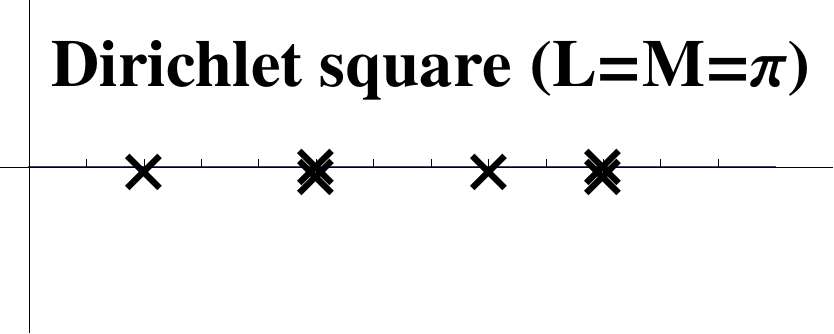} \qquad
\includegraphics[width=0.4\textwidth]{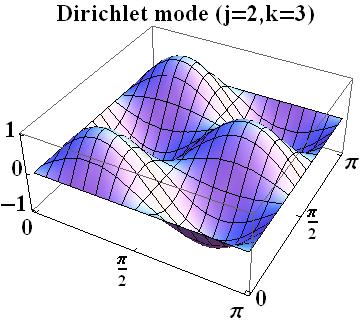}
%

\vspace*{36pt}
%
\includegraphics[width=0.4\textwidth]{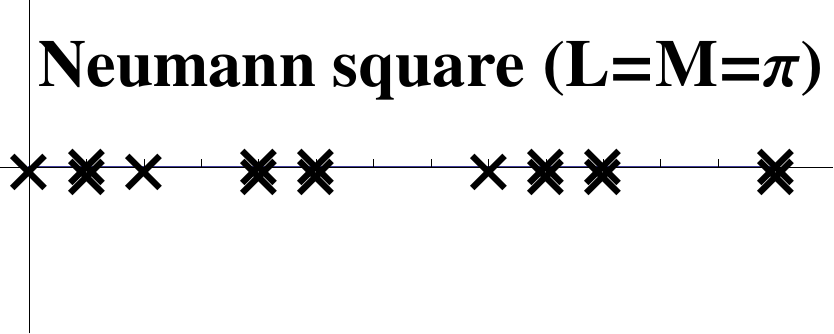} \qquad
\includegraphics[width=0.4\textwidth]{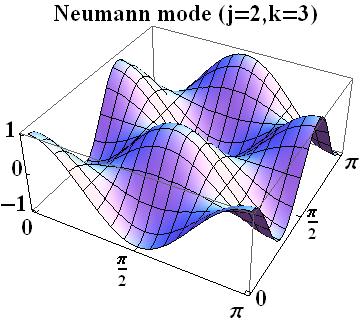}
%

\vspace*{36pt}
%
\includegraphics[width=0.4\textwidth]{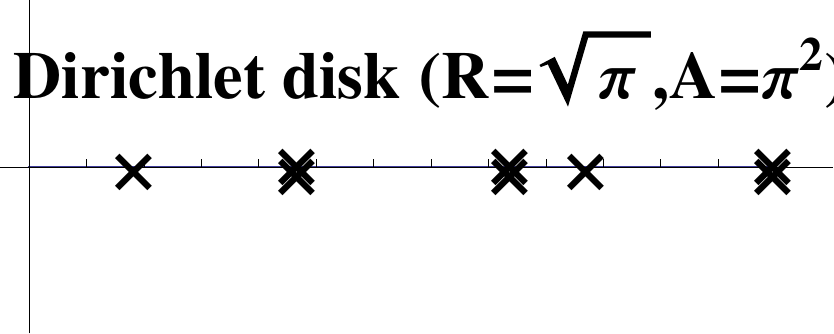} \qquad
\includegraphics[width=0.4\textwidth]{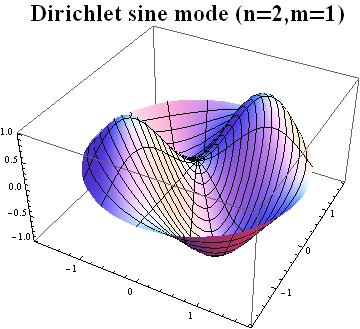}
\end{center}
\end{figure}

\chapter{Schr\"{o}dinger --- computable spectra} \label{ch:soce}

\subsubsection*{Goal} To study the classic examples of the harmonic oscillator ($1$ dim) and hydrogen atom ($3$ dim).

\paragraph*{References} \cite{S} Sections 9.4, 9.5, 10.7; \cite{GS} Section 7.5, 7.7

\subsection*{Harmonic oscillator in $1$ dimension $\boxed{-u^{\prime \prime}+x^2 u = E u}$}

Boundary condition: $u(x) \to 0$ as $x \to \pm \infty$. (Later we give a deeper perspective, in terms of a weighted $L^2$-space.)

Eigenfunctions $u_k(x)=H_k(x)e^{-x^2/2}$ for $k \geq 0$, where $H_k=$ $k$-th Hermite polynomial.

Eigenvalues $E_k=2k+1$ for $k \geq 0$, or $E=1,3,5,7,\ldots$

\noindent \emph{Examples.} $H_0(x)=1, H_1(x)=2x, H_2(x)=4x^2-2, H_k(x)=(-1)^k e^{x^2} \big( \tfrac{d\ }{dx} \big)^k e^{-x^2}$

Ground state: $u_0(x)=e^{-x^2/2}=$ Gaussian. (Check: $-u_0^{\prime \prime}+x^2 u_0 = u_0$)

\begin{figure}[h]
\begin{center}
\includegraphics[width=0.4\textwidth]{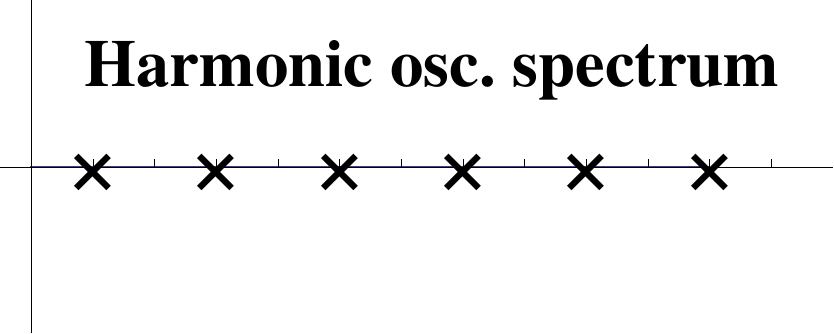} \qquad
\includegraphics[width=0.4\textwidth]{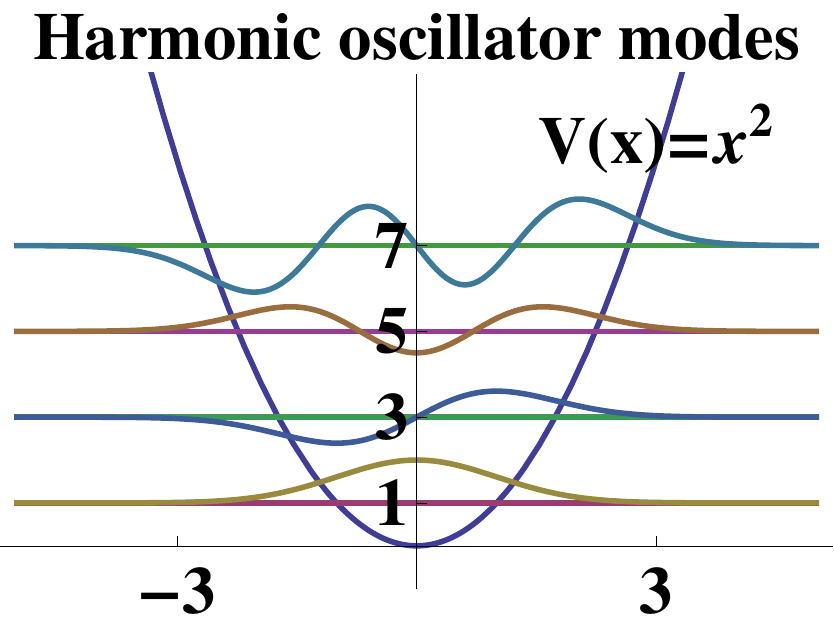}
\end{center}
\end{figure}

\subsubsection*{Quantum mechanical interpretation} If $u(x,t)$ solves the time-dependent Schr\"{od}inger equation
\[
iu_t = -u^{\prime \prime} + x^2 u
\]
with potential $V(x)=x^2$ and $u$ has $L^2$ norm equal to $1$, then $|u|^2$ represents the probability density for the location of a particle in a quadratic potential well.

The $k$-th eigenfunction $u_k(x)$ is called the \emph{$k$-th excited state}, because it gives a ``standing wave'' solution
\[
u(x,t)=e^{-iE_k t} u_k(x)
\]
to the time-dependent equation. The higher the frequency or ``energy'' $E_k$ of the excited state, the more it can spread out in the confining potential well, as the solution plots show.

\subsubsection*{Harmonic oscillator investigations}

\noindent \underline{Method 1: ODEs} Since $u_0(x)=e^{-x^2/2}$ is an eigenfunction, we guess that all eigenfunctions decay like $e^{-x^2/2}$. So we try the change of variable $u=we^{-x^2/2}$. The eigenfunction equation becomes
\[
w^{\prime \prime} - 2x w^\prime + (E-1)w = 0 ,
\]
which we recognize as the \textbf{Hermite equation}. Solving by power series, we find that the only appropriate solutions have terminating power series: they are the \textbf{Hermite polynomials}. (All other solutions grow like $e^{x^2}$ at infinity, violating the boundary condition on $u$.)

\medskip
\noindent \underline{Method 2: Raising and lowering} Define
\begin{align*}
h^+ & = - \frac{d\ }{dx} + x \qquad \text{(raising or creation operator),} \\
h^- & = \frac{d\ }{dx} + x \qquad \text{(lowering or annihilation operator).}
\end{align*}
Write $H=-\tfrac{d^2\ }{dx^2} + x^2$ for the harmonic oscillator operator. Then one computes that
\begin{align*}
H
& = h^+h^- + 1 \\
& = h^-h^+ - 1
\end{align*}
\emph{Claim.} If $u$ is an eigenfunction with eigenvalue $E$ then $h^\pm u$ is an eigenfunction with eigenvalue $E\pm 2$. (In other words, $h^+$ ``raises'' the energy, and $h^-$ ``lowers'' the energy.)

\noindent \emph{Proof.}
\begin{align*}
H(h^+ u)
& = (h^+ h^- + 1)(h^+ u) \\
& = h^+(h^-h^+ +1)u \\
& = h^+ (H+2)u \\
& = h^+(E+2) u \\
& = (E+2) h^+u
\end{align*}
and similarly $H(h^-u)=(E-2)h^-u$ (exercise).

\smallskip The only exception to the Claim is that $h^- u$ will not be an eigenfunction if $h^- u \equiv 0$, which occurs precisely when $u=u_0=e^{-x^2/2}$. Thus the lowering operator annihilates the ground state.

\subsubsection*{Relation to classical harmonic oscillator} Consider a classical oscillator with mass $m=2$, spring constant $k=2$, and displacement $x(t)$, so that $2\ddot{x}=-2x$. The total energy is
\[
\dot{x}^2 + x^2 = \text{const.} = E .
\]
To describe a quantum oscillator, we formally replace the momentum $\dot{x}$ with the ``momentum operator'' $-i \tfrac{d\ }{dx}$ and let the equation act on a function $u$:
\[
\Big[ \big( -i \frac{d\ }{dx} \big)^2 + x^2 \Big] u = E u .
\]
This is exactly the eigenfunction equation $-u^{\prime \prime}+x^2 u = Eu$.

\subsection*{Harmonic oscillator in higher dimensions $\boxed{-\Delta u+|x|^2 u = E u}$}
Here $|x|^2=x_1^2+\cdots +x_d^2$. The operator separates into a sum of $1$ dimensional operators, and hence has product type eigenfunctions
\[
u=u_{k_1}(x_1) \cdots u_{k_d}(x_d) , \qquad E = (2k_1 + 1) + \cdots + (2k_d + 1) .
\]

\subsection*{Hydrogen atom in $3$ dimensions $\boxed{-\Delta u - \tfrac{2}{|x|} u = E u}$}
Here $V(x)=-2/|x|$ is an attractive electrostatic (``Coulomb'') potential created by the proton in the hydrogen nucleus. (Notice the gradient of this potential gives the correct $|x|^{-2}$ inverse square law for electrostatic force.)

Boundary conditions: $u(x) \to 0$ as $|x| \to \infty$ (we will say more later about the precise formulation of the eigenvalue problem).

Eigenvalues: $E=-1,-\tfrac{1}{4}, -\tfrac{1}{9}, \ldots$ with multiplicities $1,4,9,\ldots$

\noindent That is, the eigenvalue $E=-1/n^2$ has multiplicity $n^2$.

Eigenfunctions: $e^{-r/n} L^\ell_n(r) Y^m_\ell(\theta,\phi)$ for $0 \leq |m| \leq n-1$, where $Y^m_\ell$ is a spherical harmonic and $L^\ell_n$ equals $r^\ell$ times a Laguerre polynomial.

(Recall the spherical harmonics are eigenfunctions of the spherical Laplacian in $3$ dimensions, with $-\Delta_{\text{sphere}}Y^m_\ell = \ell(\ell+1)Y^m_\ell$. In $2$ dimensions the spherical harmonics have the form $Y=\cos(k\theta)$ and $Y=\sin(k\theta)$, which satisfy $-\tfrac{d^2 \ }{d\theta^2} Y = k^2 Y$.)

\smallskip
\noindent \emph{Examples.} The first three purely radial eigenfunctions ($\ell=m=0,n=1,2,3$) are $e^{-r}, e^{-r/2}(1-\frac{r}{2}), e^{-r/3}(1-\tfrac{2}{3}r+\tfrac{2}{27}r^2)$.

\begin{figure}[h]
\begin{center}
\includegraphics[width=0.4\textwidth]{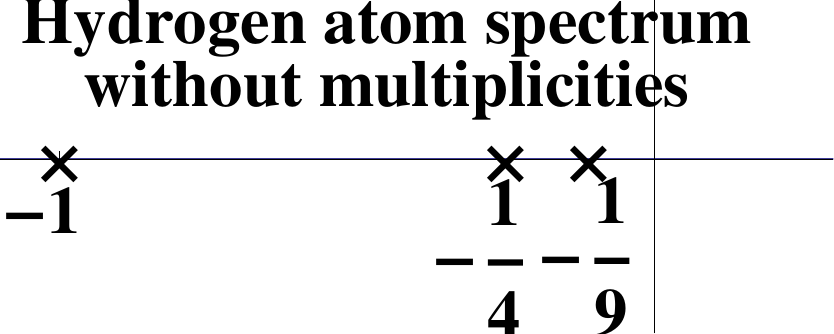} \qquad
\includegraphics[width=0.4\textwidth]{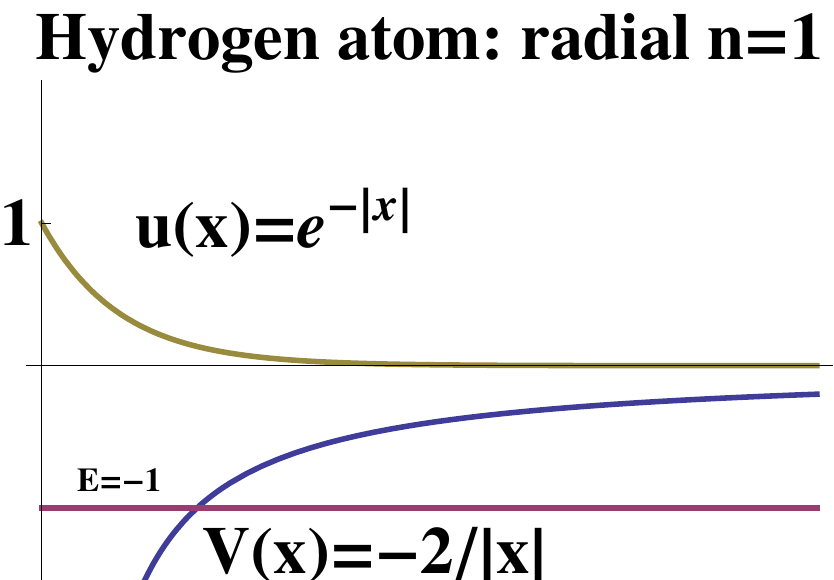} \qquad
\end{center}
\end{figure}
\begin{figure}[h]
\begin{center}
\includegraphics[width=0.4\textwidth]{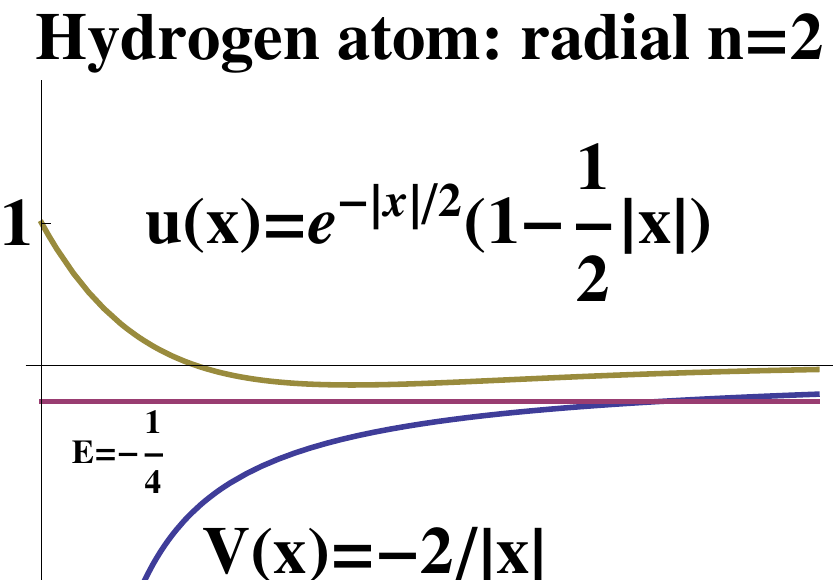} \qquad
\includegraphics[width=0.4\textwidth]{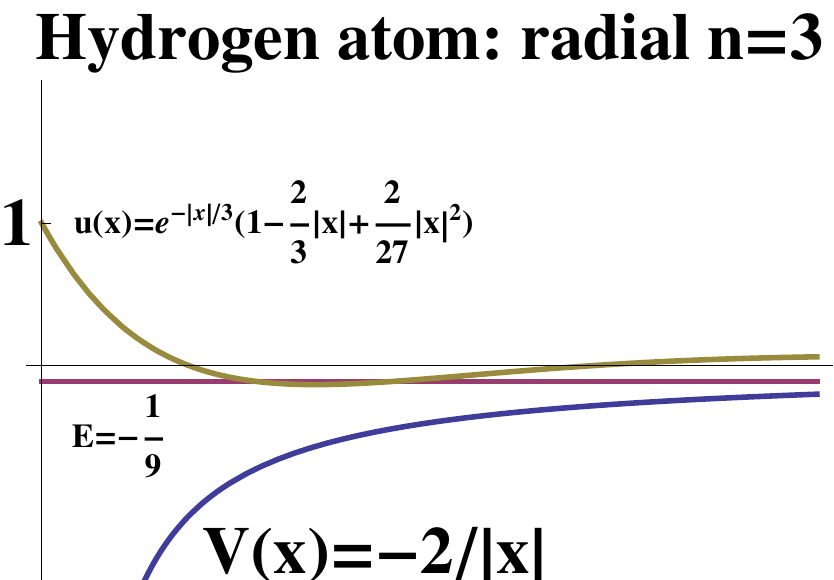}
\end{center}
\end{figure}

The corner in the graph of the eigenfunction at $r=0$ is caused by the singularity of the Coulomb potential.

\paragraph*{Continuous spectrum} Eigenfunctions with positive energy $E>0$ do exist, but they oscillate as $|x| \to \infty$, and thus do not satisfy our boundary conditions. They represent ``free electrons'' that are not bound to the nucleus. See our later discussion of continuous spectrum, in Chapter~\ref{ch:spectrum}.

\chapter{Discrete spectral theorem} \label{ch:eONB}

\subsubsection*{Goal} To state the spectral theorem for an elliptic
sesquilinear form on a dense, compactly imbedded Hilbert space, and to prove it using the spectral theorem for compact, selfadjoint operators. In later chapters we apply the spectral theorem to unify and extend the examples of Chapters~\ref{ch:lce} and \ref{ch:soce}.

\paragraph*{References} \cite{BlBl} Section 6.3

\subsection*{Matrix preview --- weak eigenvectors}
Consider a Hermitian matrix $A$, and suppose $u$ is an eigenvector with eigenvalue $\gamma$, so that $Au=\gamma u$. Take the dot product with an arbitrary vector $v$ to obtain
\[
Au \cdot \overline{v} = \gamma u \cdot \overline{v} , \qquad \forall v \in \Cd .
\]
We call this condition the ``weak form'' of the eigenvector equation. Clearly it implies the original ``strong'' form, because if $(Au-\gamma u) \cdot \overline{v} = 0$ for all $v$, then $Au-\gamma u=0$.

We will find the weak form useful below, when we generalize to Hilbert spaces.

The Hermitian nature of the matrix guarantees conjugate-symmetry of the left side of the weak equation, when $u$ and $v$ are interchanged:
\[
Au \cdot \overline{v} = \overline{Av \cdot \overline{u}} .
\]
This symmetry ensures that all eigenvalues are real, by choosing $v=u$. We will need a similar symmetry property in the Hilbert space setting.

\subsection*{PDE preview --- weak eigenfunctions}
Consider the eigenfunction equation $-\Delta u = \lam u$ for the
Laplacian, in a domain $\Omega$. Multiply by a function $v \in
H^1_0(\Omega)$, so that $v$ equals $0$ on $\partial \Omega$, and integrate
to obtain
\[
- \int_\Omega v \Delta u \, dx = \lam \int _\Omega uv \, dx .
\]
Assume $u$ and $v$ are real-valued. Green's theorem and the boundary condition on $v$ imply
\[
\int_\Omega \nabla u \cdot \nabla v \, dx = \lam \la u , v
\ra_{L^2(\Omega)} , \qquad \forall v \in H^1_0(\Omega).
\]
We call this condition the ``weak form'' of the eigenfunction equation. To
prove existence of ONBs of such weak eigenfunctions, we will generalize
to a Hilbert space problem.

Notice the left side of the weak eigenfunction equation, $\int_\Omega \nabla u \cdot \nabla v \, dx$, is  symmetric with respect to $u$ and $v$.

\subsection*{Hypotheses} Consider two infinite dimensional Hilbert spaces
$\mathcal{H}$ and $\mathcal{K}$ over $\R$ (or $\C$).

$\mathcal{H}$: inner product $\la u , v \ra_\mathcal{H}$, norm $\lv u
\rv_\mathcal{H}$

$\mathcal{K}$: inner product $\la u , v \ra_\mathcal{K}$, norm $\lv u
\rv_\mathcal{K}$

\medskip
\noindent Assume:

1. $\mathcal{K}$ is continuously and densely imbedded in $\mathcal{H}$,
meaning there exists a continuous linear injection $\iota : \mathcal{K}
\to \mathcal{H}$ with $\iota(\mathcal{K})$ dense in $\mathcal{H}$.

2. The imbedding $\mathcal{K} \hookrightarrow \mathcal{H}$ is
\textbf{compact}, meaning if $B$ is a bounded subset of $\mathcal{K}$ then
$B$ is precompact when considered as a subset of $\mathcal{H}$.
(Equivalently, every bounded sequence in $\mathcal{K}$ has a subsequence
that converges in $\mathcal{H}$.)

3. We have a map $a : \mathcal{K} \times \mathcal{K} \to \R$ (or $\C$)
that is sesquilinear, continuous, and \textbf{symmetric}, meaning
\begin{align*}
u \mapsto a(u,v) \ & \text{is linear, for each fixed $v$,} \\
v \mapsto a(u,v) \ & \text{is linear (or conjugate linear), for each fixed
$u$,} \\
|a(u,v)| & \leq (\text{const.}) \lv u \rv_\mathcal{K} \lv v
\rv_\mathcal{K} \\
a(v,u) & = a(u,v) \quad \text{(or $\overline{a(u,v)}$)}
\end{align*}

4. $a$ is \textbf{elliptic} on $\mathcal{K}$, meaning
\[
a(u,u) \geq c \lv u \rv_\mathcal{K}^2 \qquad \forall u \in \mathcal{K} ,
\]
for some $c>0$. Hence $a(u,u) \asymp \lv u \rv_\mathcal{K}^2$.

\medskip
An important consequence of symmetry and ellipticity is that:
\begin{quote}
$a(u,v)$ defines an inner product whose norm is equivalent to the $\lv
\cdot \rv_\mathcal{K}$-norm.
\end{quote}

\subsection*{Spectral theorem}
\begin{theorem} \label{th:spec}
Under the hypotheses above, there exist vectors $u_1,u_2,u_3,\ldots \in
\mathcal{K}$ and numbers
\[
0 < \gamma_1 \leq \gamma_2 \leq \gamma_3 \leq \cdots \to \infty
\]
such that:
\begin{itemize}
\item$u_j$ is an eigenvector of $a(\cdot,\cdot)$ with eigenvalue
$\gamma_j$, meaning
\begin{equation} \label{eONB:eig}
a(u_j,v) = \gamma_j \la u_j , v \ra_\mathcal{H} \qquad \forall v \in
\mathcal{K} ,
\end{equation}
\item $\{ u_j \}$ is an ONB for $\mathcal{H}$,
\item $\{ u_j/\sqrt{\gamma_j} \}$ is an ONB for $\mathcal{K}$ with respect
to the $a$-inner product.
\end{itemize}
The decomposition
\begin{equation} \label{eq:orthogtwice}
f = \sum_j \la f , u_j \ra_\mathcal{H} \, u_j
\end{equation}
converges in $\mathcal{H}$ for each $f \in \mathcal{H}$, and converges in
$\mathcal{K}$ for each $f \in \mathcal{K}$.
\end{theorem}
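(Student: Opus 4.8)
The plan is to convert the sesquilinear form problem into an eigenvalue problem for a compact, selfadjoint operator on $\mathcal{H}$, apply the spectral theorem for such operators, and then translate the conclusions back. First I would equip $\mathcal{K}$ with the $a$-inner product; by the remark following the hypotheses, symmetry and ellipticity (hypothesis 4) together with continuity (hypothesis 3) make $a(\cdot,\cdot)$ an inner product whose norm is equivalent to $\lv \cdot \rv_\mathcal{K}$, so $(\mathcal{K}, a)$ is a Hilbert space. Next, for each fixed $f \in \mathcal{H}$, the map $v \mapsto \la f, \iota v \ra_\mathcal{H}$ is a bounded conjugate-linear functional on $(\mathcal{K},a)$ (bounded because $\iota$ is continuous from $\mathcal{K}$ to $\mathcal{H}$ and $\lv \cdot \rv_\mathcal{K} \asymp \sqrt{a(\cdot,\cdot)}$), so by Riesz representation there is a unique $Tf \in \mathcal{K}$ with
\[
a(Tf, v) = \la f , \iota v \ra_\mathcal{H} \qquad \forall v \in \mathcal{K}.
\]
This defines a bounded linear operator $T : \mathcal{H} \to \mathcal{K}$.

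The key properties of $S := \iota \circ T : \mathcal{H} \to \mathcal{H}$ are then: (i) $S$ is selfadjoint, since $\la Sf, g \ra_\mathcal{H} = \la \iota T g, f \ra_\mathcal{H}$ unwinds via $a(Tf, Tg) = \la f, \iota T g\ra_\mathcal{H}$ and the symmetry $a(Tf,Tg) = a(Tg,Tf)$ (or its conjugate in the complex case); (ii) $S$ is positive, because $\la Sf, f\ra_\mathcal{H} = a(Tf,Tf) \geq 0$, and in fact injective, since $Sf = 0$ forces $a(Tf,Tf)=0$, hence $Tf=0$, hence $\la f, \iota v\ra = 0$ for all $v$, hence $f=0$ by density of $\iota(\mathcal{K})$; (iii) $S$ is compact, because $T$ maps bounded subsets of $\mathcal{H}$ to bounded subsets of $\mathcal{K}$ (as $T$ is bounded into $\mathcal{K}$) and the imbedding $\iota : \mathcal{K} \hookrightarrow \mathcal{H}$ is compact by hypothesis 2. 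Then the spectral theorem for compact selfadjoint operators gives an ONB $\{u_j\}$ of $\mathcal{H}$ (we can take all of $\mathcal{H}$ since $S$ is injective, so there is no kernel to worry about) with $S u_j = s_j u_j$, $s_j > 0$, $s_j \to 0$. Setting $\gamma_j := 1/s_j$ gives $0 < \gamma_1 \leq \gamma_2 \leq \cdots \to \infty$, and unwinding $S u_j = s_j u_j$ through the definitions yields exactly the weak eigenvector equation~\eqref{eONB:eig}: $a(u_j, v) = \gamma_j \la u_j, \iota v\ra_\mathcal{H}$ (identifying $\iota$-images with their preimages notationally, as the statement does). In particular each $u_j$ lies in $\mathcal{K}$, since $u_j = \gamma_j S u_j = \gamma_j \iota(T u_j) \in \iota(\mathcal{K})$.

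It remains to check the two ONB statements for $\mathcal{K}$ and the convergence of~\eqref{eq:orthogtwice}. For orthonormality of $\{u_j/\sqrt{\gamma_j}\}$ in $(\mathcal{K},a)$: compute $a(u_j, u_k) = \gamma_j \la u_j, u_k\ra_\mathcal{H} = \gamma_j \delta_{jk}$ using~\eqref{eONB:eig} with $v = u_k \in \mathcal{K}$, so $a(u_j/\sqrt{\gamma_j}, u_k/\sqrt{\gamma_k}) = \delta_{jk}$. For completeness in $\mathcal{K}$: if $w \in \mathcal{K}$ satisfies $a(w, u_j) = 0$ for all $j$, then $\gamma_j \la w, u_j\ra_\mathcal{H} = 0$, so $\la w, u_j\ra_\mathcal{H} = 0$ for all $j$, hence $w = 0$ since $\{u_j\}$ is an ONB for $\mathcal{H}$; thus $\{u_j/\sqrt{\gamma_j}\}$ spans a dense subspace of $\mathcal{K}$ and is therefore an ONB there. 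The $\mathcal{H}$-convergence of~\eqref{eq:orthogtwice} for $f \in \mathcal{H}$ is just the standard ONB expansion. For $f \in \mathcal{K}$, expand $f$ instead in the $a$-ONB: $f = \sum_j a(f, u_j/\sqrt{\gamma_j})\, u_j/\sqrt{\gamma_j}$ with convergence in $\mathcal{K}$, and observe $a(f, u_j) = \gamma_j \la f, u_j\ra_\mathcal{H}$, so the coefficient of $u_j$ is $a(f,u_j)/\gamma_j = \la f, u_j\ra_\mathcal{H}$ — the two expansions have identical coefficients, which is exactly the assertion. The main obstacle I anticipate is nothing deep but rather bookkeeping: being careful with the injection $\iota$ (the theorem statement suppresses it, writing $\la u_j, v\ra_\mathcal{H}$ for $v \in \mathcal{K}$), and verifying compactness of $S$ cleanly — namely that $T$ being bounded $\mathcal{H} \to \mathcal{K}$ plus compact imbedding $\mathcal{K}\hookrightarrow\mathcal{H}$ really does give compactness of the composition, which is immediate from the sequential characterization in hypothesis 2.
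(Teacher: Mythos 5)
Your proposal is correct and follows essentially the same route as the paper's proof: Riesz representation in the $a$-inner product to build the bounded solution operator $\mathcal{H}\to\mathcal{K}$, compactness and selfadjointness of its composition with the imbedding, the spectral theorem for compact selfadjoint operators, and then reciprocation of the eigenvalues plus the $a$-orthonormality/completeness check in $\mathcal{K}$. Your handling of positivity (via $\la Sf,f\ra_{\mathcal H}=a(Tf,Tf)\ge 0$ together with injectivity) and your observation that the $\mathcal H$- and $\mathcal K$-expansions share identical coefficients match the paper's argument in substance.
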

The idea is to show that a certain ``inverse'' operator associated with
$a$ is compact and selfadjoint on $\mathcal{H}$. This approach makes sense
in terms of differential equations, where $a$ would correspond to a
differential operator such as $-\Delta$ (which is unbounded) and the
inverse would correspond to an integral operator $(-\Delta)^{-1}$ (which
is bounded, and in fact compact, on suitable domains). Indeed, we will
begin by solving the analogue of $-\Delta u=f$ weakly in our Hilbert space
setting, with the help of the Riesz Representation Theorem.

\medskip \noindent \emph{Remark.} For a more general spectral theorem, readers may consult the recent paper \cite{Au} and certain references therein. Briefly, the eigenvectors there satisfy $a(u_j,v) = \gamma_j b(u_j,v)$ for all $v \in \mathcal{K}$, where the bilinear form $b$ is assumed to be weakly continuous on $\mathcal{K} \times \mathcal{K}$. In our situation, $b(u,v)=\la u,v \ra_\mathcal{H}$ and our assumption that $\mathcal{K}$ imbeds compactly into $\mathcal{H}$ implies weak continuity of $b$.

\medskip
\noindent \emph{Proof of Theorem~\ref{th:spec}.}
We first claim that for each $f \in \mathcal{H}$ there exists a unique $u
\in \mathcal{K}$ such that
\begin{equation} \label{eONB:weak}
a(u,v) = \la f , v \ra_\mathcal{H} \qquad \forall v \in \mathcal{K} .
\end{equation}
Furthermore, the map
\begin{align*}
B : & \mathcal{H} \to \mathcal{K} \\
& f \mapsto u
\end{align*}
is linear and bounded. To prove this claim, fix $f \in \mathcal{H}$ and
define a bounded linear functional $F(v)=\la v , f \ra_\mathcal{H}$ on
$\mathcal{K}$, noting for the boundedness that
\begin{align*}
|F(v)|
& \leq \lv v \rv_\mathcal{H} \lv f \rv_\mathcal{H} \\
& \leq (\text{const.}) \lv v \rv_\mathcal{K} \lv f \rv_\mathcal{H} \qquad
\text{since $\mathcal{K}$ is imbedded in $\mathcal{H}$} \\
& \leq (\text{const.}) a(v,v)^{1/2} \lv f \rv_\mathcal{H}
\end{align*}
by ellipticity. Hence by the Riesz Representation Theorem on $\mathcal{K}$
(with respect to the $a$-inner product and norm on $\mathcal{K}$), there
exists a unique $u \in \mathcal{K}$ such that $F(v) = a(v,u)$ for all $v
\in \mathcal{K}$. That is,
\[
\la v , f \ra_\mathcal{H} = a(v,u) \qquad \forall v \in \mathcal{K} ,
\]
as desired for \eqref{eONB:weak}. Thus the map $B:f \mapsto u$ is well
defined. Clearly it is linear. And
\[
a(u,u) = |F(u)| \leq (\text{const.}) a(u,u)^{1/2} \lv f \rv_\mathcal{H} .
\]
Hence $a(u,u)^{1/2} \leq (\text{const.}) \lv f \rv_\mathcal{H}$, so that
$B$ is bounded from $\mathcal{H}$ to $\mathcal{K}$, which proves our
initial claim.

Next, $B:\mathcal{H} \to \mathcal{K} \to \mathcal{H}$ is compact, since
$\mathcal{K}$ imbeds compactly into $\mathcal{H}$. Further, $B$ is
selfadjoint on $\mathcal{H}$, since for all $f,g \in \mathcal{H}$ we have
\begin{align*}
\la Bf , g \ra_\mathcal{H}
& = \overline{\la g , Bf \ra_\mathcal{H}} \\
& = \overline{a(Bg,Bf)} && \text{by definition of $B$,} \\
& = a(Bf,Bg) && \text{by symmetry of $a$,} \\
& = \la f , Bg \ra_\mathcal{H} && \text{by definition of $B$,}
\end{align*}
which implies $B^*=B$.

Hence the spectral theorem for compact, self-adjoint operators
\cite[App.~D]{E} provides an ONB for $\mathcal{H}$ consisting of
eigenvectors of $B$, with
\[
Bu_j = \widetilde{\gamma}_j u_j
\]
for some eigenvalues $\widetilde{\gamma}_j \to 0$. The decomposition
\eqref{eq:orthogtwice} holds in $\mathcal{H}$  because $\{ u_j \}$ forms
an ONB for $\mathcal{H}$.

The eigenvalues of $B$ are all nonzero, because $B$ is injective: $Bf=0$
would imply $\la f , v \ra_\mathcal{H} = 0$ for all $v \in \mathcal{K}$ by
\eqref{eONB:weak}, so that $f=0$ (using density of $\mathcal{K}$ in
$\mathcal{H}$). Thus $\widetilde{\gamma_j} \neq 0$.

Since we may divide by the eigenvalue we deduce that $u_j=
B(u_j/\widetilde{\gamma_j})$. Thus $u_j$ belongs to the range of $B$, and
so $u_j \in \mathcal{K}$.

The eigenvalues are all positive, since
\[
\widetilde{\gamma}_j a(u_j,v) = a(Bu_j,v) = \la u_j , v \ra_\mathcal{H}
\qquad \forall v \in \mathcal{K}
\]
and choosing $v=u_j \in \mathcal{K}$ and using ellipticity shows that
$\widetilde{\gamma}_j>0$. Thus we see that the reciprocal numbers
$0<\gamma_j \overset{\text{def}}{=} 1/\widetilde{\gamma}_j \to \infty$
satisfy
\[
a(u_j,v) = \gamma_j \la u_j , v \ra_\mathcal{H} \qquad \forall v \in
\mathcal{K} ,
\]
which is \eqref{eONB:eig}.

Finally, we have $a$-orthonormality of the set $\{ u_j/\sqrt{\gamma_j} \}$:
\begin{align*}
a(u_j,u_k)
& = \gamma_j \la u_j , u_k \ra_\mathcal{H} \\
& = \gamma_j \delta_{jk} \\
& = \sqrt{\gamma_j} \sqrt{\gamma_k} \, \delta_{jk} .
\end{align*}
This orthonormal set is complete in $\mathcal{K}$, because if $a(u_j,v)=0$
for all $j$ then $\la u_j , v \ra_\mathcal{H}=0$ for all $j$, by
\eqref{eONB:eig}, so that $v=0$. Therefore each $f \in \mathcal{K}$ can be
decomposed as
\[
f = \sum_j a(f,u_j/\sqrt{\gamma_j}) \, u_j/\sqrt{\gamma_j}
\]
with convergence in $\mathcal{K}$, and this decomposition reduces to
\eqref{eq:orthogtwice} because $a(f,u_j)=\gamma_j \la f,u_j
\ra_\mathcal{H}$.
\qed

\paragraph*{Remark.} Eigenvectors corresponding to distinct eigenvalues
are automatically orthogonal, since
\begin{align*}
(\gamma_j - \gamma_k) \la u_j , u_k \ra_\mathcal{H}
& = \gamma_j \la u_j , u_k \ra_\mathcal{H} - \overline{\gamma_k \la u_k ,
u_j \ra_\mathcal{H}} \\
& = a(u_j,u_k) - \overline{a(u_k,u_j)} \\
& = 0
\end{align*}
by symmetry of $a$.

\chapter[Laplace eigenfunctions]{Application: ONBs of Laplace eigenfunctions} \label{ch:aONB}

\subsubsection*{Goal} To apply the spectral theorem from the previous chapter to the Dirichlet, Robin and Neumann Laplacians, and to the fourth order biLaplacian.

\subsection*{Laplacian}

\paragraph*{Dirichlet Laplacian}
\begin{align*}
-\Delta u & = \lam u \qquad \text{in $\Omega$} \\
u & = 0 \qquad \ \ \text{on $\partial \Omega$}
\end{align*}

$\Omega=$bounded domain in $\Rd$.

$\mathcal{H}=L^2(\Omega)$, inner product $\la u , v \ra_{L^2} = \int_\Omega uv \, dx$.

$\mathcal{K}=H^1_0(\Omega)=$Sobolev space, which is the completion of $C^\infty_0(\Omega)$ (smooth functions equalling zero on a neighborhood of the boundary) under the inner product
\[
\la u , v \ra_{H^1} = \int_\Omega [\nabla u \cdot \nabla v + uv] \, dx.
\]

Density: $H^1_0$ contains $C^\infty_0$, which is dense in $L^2$.

Continuous imbedding $H^1_0 \hookrightarrow L^2$ is trivial:
\begin{align*}
\lv u \rv_{L^2}
& = \Big( \int_\Omega u^2 \, dx \Big)^{\! 1/2} \\
& \leq \Big( \int_\Omega [ \, |\nabla u|^2 +  u^2] \, dx \Big)^{\! 1/2} \\
& = \lv u \rv_{H^1}
\end{align*}

Compact imbedding: $H^1_0 \hookrightarrow L^2$ compactly by the Rellich--Kondrachov Theorem \cite[Theorem 7.22]{GT}.

Sesquilinear form: define
\[
a(u,v) = \int_\Omega \nabla u \cdot \nabla v \, dx + \int_\Omega uv \, dx = \la u , v \ra_{H^1}, \qquad u,v \in H^1_0(\Omega) .
\]
Clearly $a$ is symmetric and continuous on $H^1_0(\Omega)$.

Ellipticity: $a(u,u) = \lv u \rv_{H^1}^2$

\medskip
The discrete spectral Theorem~\ref{th:spec} gives an ONB $\{ u_j \}$ for $L^2(\Omega)$ and corresponding eigenvalues which we denote $\gamma_j=\lam_j+1>0$ satisfying
\[
\la u_j, v \ra_{H^1} = (\lam_j + 1) \la u_j , v \ra_{L^2} \qquad \forall v \in H^1_0(\Omega) .
\]
Equivalently,
\[
\int_\Omega \nabla u_j \cdot \nabla v \, dx = \lam_j \int_\Omega u_j v \, dx \qquad \forall v \in H^1_0(\Omega) .
\]
That is,
\[
-\Delta u_j = \lam_j u_j
\]
weakly, so that $u_j$ is a \emph{weak eigenfunction} of the Laplacian with eigenvalue $\lam_j$. Elliptic regularity theory gives that $u_j$ is $C^\infty$-smooth in $\Omega$ \cite[Corollary 8.11]{GT}, and hence satisfies the eigenfunction equation classically. The boundary condition $u_j=0$ is satisfied in the sense of Sobolev spaces (since $H^1_0$ is the closure of $C^\infty_0$), and is satisfied classically on any smooth portion of $\partial \Omega$, again by elliptic regularity.

The eigenvalues are nonnegative, with
\[
\lam_j = \frac{\int_\Omega |\nabla u_j|^2 \, dx}{\int_\Omega u_j^2 \, dx} \geq 0 ,
\]
as we see by choosing $v=u_j$ in the weak formulation.

Further, $\lam_j>0$ because: if $\lam_j=0$ then $|\nabla u_j| \equiv 0$ by the last formula, so that $u_j \equiv 0$ by the Sobolev inequality for $H^1_0$ \cite[Theorem 7.10]{GT}, but $u_j$ cannot vanish identically because it has $L^2$-norm equal to $1$. Hence
\[
0<\lam_1 \leq \lam_2 \leq \lam_3 \leq \cdots \to \infty .
\]

\medskip
\emph{Aside.} The Sobolev inequality we used is easily proved: for $u \in H^1_0(\Omega)$,
\begin{align*}
\lv u \rv_{L^2}^2
& = \int_\Omega u^2 \, dx \\
& = - \int_\Omega 2 x_i u \frac{\partial u}{\partial x_i} \, dx \qquad \text{by parts} \\
& \leq 2 (\max_{x \in \overline{\Omega}} |x|) \lv u \rv_{L^2} \lv \partial u/\partial x_i \rv_{L^2} \\
& \leq (\text{const.}) \lv u \rv_{L^2} \lv \nabla u \rv_{L^2}
\end{align*}
so that we have a Sobolev inequality
\[
\lv u \rv_{L^2} \leq (\text{const.}) \lv \nabla u \rv_{L^2} \qquad \forall u \in H^1_0(\Omega) ,
\]
where the constant depends on the domain $\Omega$. Incidentally, this Sobolev inequality provides another proof that $\lam_j>0$ for the Dirichlet Laplacian.

\paragraph*{Neumann Laplacian}
\begin{align*}
-\Delta u & = \mu u \qquad \text{in $\Omega$} \\
\frac{\partial u}{\partial n} & = 0 \qquad \ \ \text{on $\partial \Omega$}
\end{align*}

$\Omega=$bounded domain in $\Rd$ with Lipschitz boundary.

$\mathcal{H}=L^2(\Omega)$

$\mathcal{K}=H^1(\Omega)=$Sobolev space, which is the completion of $C^\infty(\overline{\Omega})$ under the inner product $\la u , v \ra_{H^1}$ (see \cite[p.\,174]{GT}).

Argue as for the Dirichlet Laplacian. The compact imbedding is provided by the Rellich--Kondrachov Theorem \cite[Theorem 7.26]{GT}, which relies on Lipschitz smoothness of the boundary.

One writes the eigenvalues in the discrete spectral Theorem~\ref{th:spec} as $\gamma_j=\mu_j+1>0$ and finds
\begin{equation} \label{eq:morethanweak}
\int_\Omega \nabla u_j \cdot \nabla v \, dx = \mu_j \int_\Omega u_j v \, dx \qquad \forall v \in H^1(\Omega) ,
\end{equation}
which implies that
\[
-\Delta u_j = \mu_j u_j
\]
weakly (and hence classically). In fact \eqref{eq:morethanweak} says a little more, because it holds for all $v \in H^1(\Omega)$, not just for $v \in H^1_0(\Omega)$ as needed for a weak solution. We will use this additional information in the next chapter to show that eigenfunctions automatically satisfy the Neumann boundary condition (even though we never imposed it)!

Choosing $v=u_j$ proves $\mu_j \geq 0$. The first Neumann eigenvalue is zero: $\mu_1=0$, with a constant eigenfunction $u_1 \equiv \text{const.} \neq 0$. (This constant function belongs to $H^1(\Omega)$, although not to $H^1_0(\Omega)$.) Hence
\[
0=\mu_1 \leq \mu_2 \leq \mu_3 \leq \cdots \to \infty .
\]

\paragraph*{Robin Laplacian}

\begin{align*}
-\Delta u & = \rho u \qquad \text{in $\Omega$} \\
\frac{\partial u}{\partial n} + \sigma u & = 0 \qquad \ \ \text{on $\partial \Omega$}
\end{align*}

$\Omega=$bounded domain in $\Rd$ with Lipschitz boundary.

$\mathcal{H}=L^2(\Omega)$

$\mathcal{K}=H^1(\Omega)$

$\sigma > 0$ is the Robin constant.

The density and compact imbedding conditions are as in the Neumann case above.




Before defining the sesquilinear form, we need to make sense of the boundary values of $u$. Sobolev functions do have well defined boundary values. More precisely, there is a bounded linear operator (called the \emph{trace operator}) $T : H^1(\Omega) \to L^2(\partial \Omega)$ such that
\begin{equation} \label{eq:trace-est}
\lv Tu \rv_{L^2(\partial \Omega)} \leq \tau \lv u \rv_{H^1(\Omega)}
\end{equation}
for some $\tau>0$, and with the property that if $u$ extends to a continuous function on $\partial \Omega$, then $Tu=u$ on $\partial \Omega$. (Thus the trace operator truly captures the boundary values of $u$.) Further, if $u \in H^1_0(\Omega)$ then $Tu=0$, meaning that functions in $H^1_0$ ``equal zero on the boundary''. For these trace results, see \cite[Section 5.5]{E} for domains with $C^1$ boundary, or \cite[{\S}4.3]{EG} for the slightly rougher case of Lipschitz boundary.

Sesquilinear form:
\[
a(u,v) = \int_\Omega \nabla u \cdot \nabla v \, dx + \sigma \int_{\partial \Omega} uv \, dS(x) + \int_\Omega uv \, dx
\]
(where $u$ and $v$ on the boundary should be interpreted as the trace values $Tu$ and $Tv$). Clearly $a$ is symmetric and continuous on $H^1(\Omega)$.

Ellipticity: $a(u,u) \geq \lv u \rv_{H^1}^2$, since $\sigma>0$.

\medskip
One writes the eigenvalues in the discrete spectral Theorem~\ref{th:spec} as $\gamma_j=\rho_j+1>0$ and finds
\[
\int_\Omega \nabla u_j \cdot \nabla v \, dx + \sigma \int_{\partial \Omega} u_j v \, dS(x) = \rho_j \int_\Omega u_j v \, dx \qquad \forall v \in H^1(\Omega) ,
\]
which implies that
\[
-\Delta u_j = \rho_j u_j
\]
weakly and hence classically. For the weak solution here we need (by definition) only to use trial functions $v \in H^1_0(\Omega)$ (functions equalling zero on the boundary). In the next chapter we use the full class $v \in H^1(\Omega)$ to show that the eigenfunctions satisfy the Robin boundary condition.

Choosing $v=u_j$ proves
\[
\rho_j = \frac{\int_\Omega |\nabla u_j|^2 \, dx + \sigma \int_{\partial \Omega} u_j^2 \, dS(x)}{\int_\Omega u_j^2 \, dx} \geq 0 ,
\]
using again that $\sigma>0$. Further, $\rho_j>0$ because: if $\rho_j=0$ then $|\nabla u_j| \equiv 0$ so that $u_j \equiv \text{const.}$, and this constant must equal zero because $\int_{\partial \Omega} u_j^2 \, dS(x) = 0$; but $u_j$ cannot vanish identically because it has $L^2$-norm equal to $1$. Hence when $\sigma>0$ we have
\[
0<\rho_1 \leq \rho_2 \leq \rho_3 \leq \cdots \to \infty .
\]

\paragraph*{Negative Robin constant: $\sigma<0$.} Ellipticity more difficult to prove when $\sigma<0$. We start by controlling the boundary values in terms of the gradient and $L^2$ norm. We have
\[
\int_{\partial \Omega} u^2 \, dS(x) \leq (\text{const.}) \int_\Omega \lvert \nabla u \rvert \lvert u \rvert \, dx + (\text{const.}) \int_\Omega u^2 \, dx ,
\]
as one sees by inspecting the proof of the trace theorem (\cite[{\S}5.5]{E} or \cite[{\S}4.3]{EG}). An application of Cauchy-with-$\e$ gives
\[
\int_{\partial \Omega} u^2 \, dS(x) \leq \e \lv \nabla u \rv_{L^2}^2 + C \lv u \rv_{L^2}^2
\]
for some constant $C=C(\e)>0$ (independent of $u$). Let us choose $\e = 1/2|\sigma|$, so that
\[
a(u,u) \geq \frac{1}{2} \lv u \rv_{H^1}^2 - C|\sigma| \, \lv u \rv_{L^2}^2 .
\]
Hence the new sesquilinear form $\widetilde{a}(u,v)=a(u,v)+C|\sigma| \la u,v \ra_{L^2}$ is elliptic. We apply the discrete spectral theorem to this new form, and then obtain the eigenvalues of $a$ by subtracting $C|\sigma|$ (with the same ONB of eigenfunctions).

\subsubsection*{Eigenfunction expansions in the $L^2$ and $H^1$ norms}

The $L^2$-ONB of eigenfunctions $\{ u_j \}$ of the Laplacian gives the decomposition
\begin{equation} \label{eq:ONBH1}
f = \sum_j \la f , u_j \ra_{L^2} \, u_j
\end{equation}
with convergence in the $L^2$ and $H^1$ norms, for all $f$ in the following spaces:
\[
f \in \begin{cases}
H^1_0(\Omega) & \text{for Dirichlet,} \\
H^1(\Omega) & \text{for Neumann,} \\
H^1(\Omega) & \text{for Robin.}
\end{cases}
\]
These claims follow immediately from the discrete spectral Theorem~\ref{th:spec}, in view of our applications above.

\subsubsection*{Invariance of eigenvalues under translation, rotation and reflection, and scaling under dilation}

Eigenvalues of the Laplacian remain invariant when the domain $\Omega$ is translated, rotated or reflected, as one sees by a straightforward change of variable in either the classical or weak formulation of the eigenvalue problem. Physically, this invariance simply means that a vibrating membrane is unaware of any coordinate system we impose upon it.

Dilations do change the eigenvalues, of course, by a simple rescaling relation: $\lam_j(t \Omega) = t^{-2} \lam_j(\Omega)$ for each $j$ and all $t>0$, and similarly for the Neumann eigenvalues. (We can understand this scale factor $t^{-2}$ physically, by recalling that large drums vibrate with low tones.) The Robin eigenvalues rescale the same way under dilation, provided the Robin parameter is rescaled to $\sigma/t$ on the domain $t\Omega$.

\subsection*{BiLaplacian --- vibrating plates}

The fourth order wave equation $\phi_{tt} = - \Delta \Delta \phi$ describes the transverse vibrations of a rigid plate. (In one dimension, this equation simplifies to the \emph{beam equation}: $\phi_{tt} = - \phi^{\prime \prime \prime \prime}$). After separating out the time variable, one arrives at the eigenvalue problem for the biLaplacian:
\[
\Delta \Delta u = \Lambda u \qquad \text{in $\Omega$.}
\]
We will prove existence of an orthonormal basis of eigenfunctions. For simplicity, we treat only the Dirichlet case, which has boundary conditions
\[
u=|\nabla u|=0 \qquad \text{on $\partial \Omega$.}
\]
(The Neumann ``natural'' boundary conditions are rather complicated, for the biLaplacian.)

\bigskip
$\Omega=$bounded domain in $\Rd$

$\mathcal{H}=L^2(\Omega)$

$\mathcal{K}=H^2_0(\Omega)=$ completion of $C^\infty_0(\Omega)$ under the inner product
\[
\la u , v \ra_{H^2} = \int_\Omega [ \, \sum_{m,n=1}^d u_{x_m x_n} v_{x_m x_n} + \sum_{m=1}^d u_{x_m} v_{x_m} + uv \, ] \, dx.
\]

Density: $H^2_0$ contains $C^\infty_0$, which is dense in $L^2$.

Compact imbedding: $H^2_0 \hookrightarrow H^1_0 \hookrightarrow L^2$ and the second imbedding is compact.

Sesquilinear form: define
\[
a(u,v) = \int_\Omega [ \, \sum_{m,n=1}^d u_{x_m x_n} v_{x_m x_n}  + uv \, ] \, dx, \qquad u,v \in H^2_0(\Omega) .
\]
Clearly $a$ is symmetric and continuous on $H^2_0(\Omega)$.

Ellipticity: $\lv u \rv_{H^2}^2 \leq (d+1) a(u,u)$, because integration by parts gives
\begin{align*}
\int_\Omega \sum_{m=1}^d u_{x_m}^2 \, dx
& = -\sum_{m=1}^d \int_\Omega u_{x_m x_m} u \, dx \\
& \leq \sum_{m=1}^d \int_\Omega [ \, u_{x_m x_m}^2 + u^2 \, ] \, dx \\
& \leq a(u,u) d .
\end{align*}

\medskip
The discrete spectral Theorem~\ref{th:spec} gives an ONB $\{ u_j \}$ for $L^2(\Omega)$ and corresponding eigenvalues which we denote $\gamma_j=\Lambda_j+1>0$ satisfying
\[
a(u_j, v) = (\Lambda_j + 1) \la u_j , v \ra_{L^2} \qquad \forall v \in H^2_0(\Omega) .
\]
Equivalently,
\[
\int_\Omega \sum_{m,n=1}^d (u_j)_{x_m x_n} v_{x_m x_n} \, dx = \Lambda_j \int_\Omega u_j v \, dx \qquad \forall v \in H^2_0(\Omega) .
\]
That is,
\[
\sum_{m,n=1}^d  (u_j)_{x_m x_m x_n x_n} = \Lambda_j u_j
\]
weakly, which says
\[
\Delta \Delta u_j = \Lambda_j u_j
\]
weakly. Hence $u_j$ is a \emph{weak eigenfunction} of the biLaplacian with eigenvalue $\Lambda_j$. Elliptic regularity  gives that $u_j$ is $C^\infty$-smooth, and hence satisfies the eigenfunction equation classically. The boundary condition $u_j=|\nabla u_j|=0$ is satisfied in the sense of Sobolev spaces (since $u_j$ and each partial derivative $(u_j)_{x_m}$ belong to $H^1_0$), and the boundary condition is satisfied classically on any smooth portion of $\partial \Omega$, again by elliptic regularity.

The eigenvalues are nonnegative, with
\[
\Lambda_j = \frac{\int_\Omega |D^2 u_j|^2 \, dx}{\int_\Omega u_j^2 \, dx} \geq 0 ,
\]
as we see by choosing $v=u_j$ in the weak formulation and writing $D^2 u = [u_{x_m x_n}]_{m,n=1}^d$ for the Hessian matrix.

Further, $\Lambda_j>0$ because: if $\Lambda_j=0$ then $(u_j)_{x_m x_n} \equiv 0$ by the last formula, so that $(u_j)_{x_m} \equiv 0$ by the Sobolev inequality for $H^1_0$ applied to $(u_j)_{x_m}$, and hence $u_j \equiv 0$ by the same Sobolev inequality, which gives a contradiction. Hence
\[
0<\Lambda_1 \leq \Lambda_2 \leq \Lambda_3 \leq \cdots \to \infty .
\]

\subsection*{Compact resolvents}

The essence of the proof of the discrete spectral Theorem~\ref{th:spec} is to show that the inverse operator $B$ is compact, which means for our differential operators that the inverse is a compact integral operator. For example, in the Neumann Laplacian application we see that $(-\Delta+1)^{-1}$ is compact from $L^2(\Omega)$ to $H^1(\Omega)$. So is $(-\Delta+\alpha)^{-1}$ for any positive $\alpha$, but $\alpha=0$ does not give an invertible operator because the Neumann Laplacian has nontrivial kernel, with $-\Delta(c)=0$ for every constant $c$.

Thus for the Neumann Laplacian, the \emph{resolvent operator}
\[
R_\lam = (-\Delta-\lam)^{-1}
\]
is compact whenever $\lam$ is negative.

\chapter{Natural boundary conditions} \label{ch:NBC}

\subsubsection*{Goal} To understand how the Neumann and Robin boundary conditions arise
``naturally'' from the weak eigenfunction equation.

\paragraph*{Dirichlet boundary conditions} \

\noindent are imposed directly by our choice of function space $H^1_0(\Omega)$, since each function in that space is a limit of functions with compact support in $\Omega$.

\paragraph*{Neumann boundary conditions} \

The weak form of the Neumann eigenequation for the Laplacian, from Chapter~\ref{ch:aONB}, is:
\begin{equation} \label{eq:NBC1}
\int_\Omega \nabla u \cdot \nabla v \, dx = \mu \int_\Omega u v \, dx
\qquad \forall v \in H^1(\Omega) .
\end{equation}
From this formula we showed that $-\Delta u = \mu u$ weakly and hence classically, by using only functions $v$ that vanish on the boundary, meaning $v \in H^1_0(\Omega)$.

To deduce the Neumann boundary condition $\partial u/\partial n = 0$, we will take $v$ \emph{not} to vanish on the boundary. Assume for simplicity that the boundary is smooth, so that $u$ extends smoothly to $\overline{\Omega}$. Green's formula (integration by parts) applied to \eqref{eq:NBC1} implies that
\[
\int_\Omega (-\Delta u) v \, dx + \int_{\partial \Omega} \frac{\partial
u}{\partial n} v \, dS = \int_\Omega (\mu u) v \, dx \qquad \forall v \in
C^\infty(\overline{\Omega}) .
\]
Since $-\Delta u = \mu u$, we deduce
\[
\int_{\partial \Omega} \frac{\partial u}{\partial n} v \, dS = 0 \qquad
\forall v \in C^\infty(\overline{\Omega}) .
\]
One may choose $v \in C^\infty(\overline{\Omega})$ to equal the normal derivative of $u$ on the boundary (meaning $v|_{\partial \Omega} = \tfrac{\partial
u}{\partial n}$), or alternatively one may use density of $C^\infty(\overline{\Omega}) \big|_{\partial \Omega}$ in $L^2(\partial \Omega)$; either way one concludes that
\[
\frac{\partial u}{\partial n} = 0 \qquad \text{on $\partial \Omega$,}
\]
which is the Neumann boundary condition.

\emph{Note.} If the boundary is only piecewise smooth, then one merely applies the above reasoning on the smooth portions of the boundary, to show the Neumann condition holds there.

\paragraph*{Robin boundary conditions} \

Integrating by parts in the Robin eigenfunction equation
\[
\int_\Omega \nabla u \cdot \nabla v \, dx + \sigma \int_{\partial \Omega}
uv \, dS = \rho \int_\Omega u v \, dx \qquad \forall v \in H^1(\Omega)
\]
(that is, applying Green's formula to this equation) and then using that $-\Delta u = \rho u$ gives that
\[
\int_{\partial \Omega} \big( \frac{\partial u}{\partial n} + \sigma u
\big) v \, dS = 0 \qquad \forall v \in C^\infty(\overline{\Omega})
.
\]
Like above, we obtain the Robin boundary condition
\[
\frac{\partial u}{\partial n} + \sigma u = 0 \qquad \text{on $\partial
\Omega$,}
\]
at least on smooth portions of the boundary.

\paragraph*{BiLaplacian --- natural boundary conditions} \

Natural boundary conditions for the biLaplacian can be derived similarly \cite[{\S}5]{C}. They are much more complicated than for the Laplacian.

\chapter[Magnetic Laplacian]{Application: ONB of eigenfunctions for the
Laplacian with magnetic field} \label{ch:maglaplace}

\subsubsection*{Goals} To apply the spectral theorem from
Chapter~\ref{ch:eONB} to the magnetic Laplacian (the
Schr\"{o}dinger operator for a particle in the presence of a classical magnetic field).

\subsubsection*{Magnetic Laplacian} \

Take a bounded domain $\Omega$ in $\Rd$, with $d=2$ or $d=3$. We seek an
ONB of eigenfunctions and eigenvalues for the \emph{magnetic Laplacian}
\begin{align*}
(i\nabla + \vec{A})^2 \, u & = \beta u \qquad \text{in $\Omega$,} \\
u & = 0 \qquad \ \text{on $\partial \Omega$,}
\end{align*}
where $u(x)$ is complex-valued and
\[
\vec{A} : \Rd \to \Rd
\]
is a given bounded vector field.

Physically, $\vec{A}$ represents the \emph{vector potential}, whose curl
equals the magnetic field: $\nabla \times \vec{A} = \vec{B}$. Note that in
$2$ dimensions, one extends $\vec{A}=(A_1,A_2)$ to a $3$-vector
$(A_1,A_2,0)$ before taking the curl, so that the field
$\vec{B}=(0,0,\tfrac{\partial A_2}{\partial x_1}-\tfrac{\partial
A_1}{\partial x_2})$ cuts vertically through the plane of the domain.
For a brief explanation of how the magnetic Laplacian arises from the correspondence between classical energy functions and quantum mechanical Hamiltonians, see \cite[p.~173]{RS2}.

Now we choose the Hilbert spaces and sesquilinear form. Consider only the
Dirichlet boundary condition, for simplicity:

\bigskip
\noindent $\mathcal{H}=L^2(\Omega;\C)$ (complex valued functions), with inner product
\[
\la u , v \ra_{L^2} = \int_\Omega u \overline{v} \, dx .
\]

\noindent $\mathcal{K}=H^1_0(\Omega;\C)$ with inner product
\[
\la u , v \ra_{H^1} = \int_\Omega [ \, \nabla u \cdot \overline{\nabla v}
+ u \overline{v}  \, ] \, dx
\]

Density: $\mathcal{K}$ contains $C^\infty_0$, which is dense in $L^2$.

Continuous imbedding $H^1_0 \hookrightarrow L^2$ is trivial, since $\lv u
\rv_{L^2} \leq \lv u \rv_{H^1}$, and the imbedding is compact by the
Rellich--Kondrachov Theorem \cite[Theorem 7.22]{GT}.

\smallskip
Sesquilinear form: define
\[
a(u,v) = \int_\Omega (i\nabla + \vec{A}) u \cdot \overline{(i\nabla +
\vec{A})v} \, dx + C \int_\Omega u\overline{v} \, dx , \qquad u,v \in
H^1_0(\Omega;\C) ,
\]
with constant $C= \lv \vec{A} \rv_{L^\infty}^2+\tfrac{1}{2}$. Clearly $a$ is
symmetric and continuous on $H^1_0$.

Ellipticity:
\begin{align*}
a(u,u)
& = \int_\Omega \big[ \, |\nabla u|^2 + 2 \Re (i\nabla u \cdot \vec{A}
\overline{u}) + |\vec{A}|^2 |u|^2 + C|u|^2 \, \big] \, dx \\
& \geq \int_\Omega \big[ \, |\nabla u|^2 - 2 |\nabla u| |\vec{A}| |u| + 2
|\vec{A}|^2 |u|^2 + \frac{1}{2}|u|^2 \, \big] \, dx \\
& \geq \int_\Omega \big[ \, \frac{1}{2} |\nabla u|^2 + \frac{1}{2} |u|^2
\, \big] \, dx \\
& = \frac{1}{2} \lv u \rv_{H^1}^2
\end{align*}

\medskip
The discrete spectral Theorem~\ref{th:spec} gives an ONB $\{ u_j \}$ for
$L^2(\Omega;\C)$ and corresponding eigenvalues which we denote
$\gamma_j=\beta_j+C>0$ satisfying
\[
\int_\Omega (i\nabla + \vec{A}) u_j \cdot \overline{(i\nabla + \vec{A})v)}
\, dx = \beta_j \int_\Omega u_j \overline{v} \, dx \qquad \forall v \in
H^1_0(\Omega;\C) .
\]
In particular,
\[
(i\nabla + \vec{A})^2 \, u_j = \beta_j u_j
\]
weakly (and hence classically, assuming smoothness of the vector potential
$\vec{A}$), so that $u_j$ is an eigenfunction of the magnetic Laplacian
$(i\nabla + \vec{A})^2$ with eigenvalue $\beta_j$. We have
\[
\beta_1 \leq \beta_2 \leq \beta_3 \leq \cdots \to \infty .
\]
The eigenvalues satisfy
\[
\beta_j = \frac{\int_\Omega |(i\nabla + \vec{A}) u_j|^2 \, dx}{\int_\Omega
|u_j|^2 \, dx} ,
\]
as we see by choosing $v=u_j$ in the weak formulation. Hence the
eigenvalues are all nonnegative.

In fact $\beta_1>0$ if the magnetic field vanishes nowhere, as we will show by proving the contrapositive. If $\beta_1=0$ then $(i\nabla + \vec{A}) u_1 \equiv 0$, which implies $\vec{A} = -i\nabla \log u_1$ wherever $u_1$ is nonzero. Then $\nabla \times \vec{A}=0$ wherever $u_1$ is nonzero, since the curl of a gradient vanishes identically. (Here we assume $u_1$ is twice continuously differentiable.) Thus the magnetic field vanishes somewhere, as we wanted to show.

\smallskip
\emph{Aside.} The preceding argument works regardless of the boundary condition. In the case of Dirichlet boundary conditions, one need not assume the magnetic field is nonvanishing, because the above argument and the reality of $\vec{A}$ together imply that if $\beta_1=0$ then $|u_1|$ is constant, which is impossible since $u_1=0$ on the boundary.

\paragraph*{Gauge invariance} \

Many different vector potentials can generate the same magnetic field. For example, in $2$ dimensions the potentials
\[
\vec{A} = (0,x_1) , \qquad \vec{A} = (-x_2,0) , \qquad \vec{A} = \frac{1}{2}(-x_2,x_1) ,
\]
all generate the same (constant) magnetic field: $\nabla \times \vec{A} = (0,0,1)$. Indeed, adding any gradient vector $\nabla f$ to the potential leaves the magnetic field unchanged, since the curl of a gradient equals zero. This phenomenon goes by the name of \emph{gauge invariance}.

How is the spectral theory of the magnetic Laplacian affected by gauge invariance? The sesquilinear form definitely changes when we replace $\vec{A}$ with $\vec{A}+\nabla f$. Fortunately, the new eigenfunctions are related to the old by a unitary transformation, as follows. Suppose $f$ is $C^1$-smooth on the closure of the domain. For any trial function $u \in H^1_0(\Omega;\C)$ we note that the modulated function $e^{if}u$ also belongs to $H^1_0(\Omega;\C)$, and that
\[
(i\nabla + \vec{A})u = (i\nabla + \vec{A} + \nabla f)(e^{if}u) .
\]
Thus if we write $a$ for the original sesquilinear form and $\widetilde{a}$ for the analogous form coming from the vector potential $\vec{A}+\nabla f$, we deduce
\[
a(u,v) = \widetilde{a}(e^{if}u,e^{if}v)
\]
for all trial functions $u,v$. Since also $\la u,v \ra_{L^2} = \la e^{if}u,e^{if}v \ra_{L^2}$, we find that the ONB of eigenfunctions $u_j$ associated with $a$ transforms to an ONB of eigenfunctions $e^{if}u_j$ associated with $\widetilde{a}$. The eigenvalues (energy levels) $\beta_j$ are unchanged by this transformation.

For geometric invariance of the spectrum with respect to rotations, reflections and translations, and for a discussion of the Neumann and Robin situations, see
\cite[Appendix A]{LLR}.

\subsubsection{Higher dimensions}

In dimensions $d \geq 4$ we identify the vector potential $\vec{A} : \Rd
\to \Rd$ with a $1$-form
\[
A = A_1 \, dx_1 + \cdots + A_d \, dx_d
\]
and obtain the magnetic field from the exterior derivative:
\[
B = dA .
\]
Apart from that, the spectral theory proceeds as in dimensions $2$ and $3$.

\chapter[Schr\"{o}dinger in confining well]{Application: ONB of eigenfunctions for Schr\"{o}dinger in a confining well} \label{ch:aSch}

\subsubsection*{Goal} To apply the spectral theorem from Chapter~\ref{ch:eONB} to the harmonic oscillator and more general confining potentials in higher dimensions.

\paragraph*{Schr\"{o}dinger operator with potential growing to infinity} \

We treat a locally bounded, real-valued potential $V(x)$ on $\Rd$ that grows at infinity:
\[
-C \leq V(x) \to \infty \qquad \text{as $|x| \to \infty$,}
\]
for some constant $C>0$. For example, $V(x)=|x|^2$ gives the harmonic oscillator.

We aim to prove existence of an ONB of eigenfunctions and eigenvalues for
\begin{align*}
(-\Delta+V)u & = Eu \qquad \text{in $\Rd$} \\
u & \to 0 \qquad \ \text{as $|x| \to \infty$}
\end{align*}

\bigskip
$\Omega=\Rd$

$\mathcal{H}=L^2(\Rd)$, inner product $\la u , v \ra_{L^2} = \int_\Rd uv \, dx$.

$\mathcal{K}=H^1(\Rd) \cap L^2(|V| \, dx)$ under the inner product
\[
\la u , v \ra_\mathcal{K} = \int_\Rd [ \, \nabla u \cdot \nabla v + (1+|V|)uv \, ] \, dx.
\]

Density: $\mathcal{K}$ contains $C^\infty_0$, which is dense in $L^2$.

Continuous imbedding $\mathcal{K} \hookrightarrow L^2$ is trivial, since $\lv u \rv_{L^2} \leq \lv u \rv_\mathcal{K}$. To prove the imbedding is compact:

\emph{Proof that imbedding is compact.} Suppose $\{ f_k \}$ is a bounded sequence in $\mathcal{K}$, say with $\lv f_k \rv_\mathcal{K}
\leq M$ for all $k$. We must prove the existence of a subsequence
converging in $L^2(\Rd)$.

The sequence is bounded in $H^1(B(R))$ for each ball $B(R) \subset \Rd$
that is centered at the origin. Take $R=1$. The Rellich--Kondrachov theorem
provides a subsequence that converges in $L^2 \big( B(1) \big)$. Repeating
with $R=2$ provides a sub-subsequence converging in $L^2 \big( B(2)
\big)$. Continue in this fashion and then consider the diagonal
subsequence, to obtain a subsequence that converges in $L^2 \big( B(R)
\big)$ for each $R>0$.

We will show this subsequence converges in $L^2(\Rd)$. Denote it by $\{
f_{k_\ell} \}$. Let $\e>0$. Since $V(x)$ grows to infinity, we may choose
$R$ so large that $V(x) \geq 1/\e$ when $|x| \geq R$. Then
\begin{align*}
\int_{\Rd \setminus B(R)} f_{k_\ell}^2 \, dx
& \leq \e \int_{\Rd \setminus B(R)} f_{k_\ell}^2 V \, dx \\
& \leq \e \lv f_{k_\ell} \rv_\mathcal{K}^2 \\
& \leq \e M^2
\end{align*}
for all $\ell$. Since also $\{ f_{k_\ell} \}$ converges on $B(R)$, we have
\[
\limsup_{\ell,m \to \infty} \lv f_{k_\ell} - f_{k_m} \rv_{L^2(\Rd)}
= \limsup_{\ell,m \to \infty} \lv f_{k_\ell} - f_{k_m} \rv_{L^2(\Rd \setminus B(R))}
\leq 2\sqrt{\e} M .
\]
Therefore $\{ f_{k_\ell} \}$ is Cauchy in $L^2(\Rd)$, and hence converges.

\smallskip
Sesquilinear form: define
\[
a(u,v) = \int_\Rd [ \, \nabla u \cdot \nabla v + Vuv \, ] \, dx + (2C+1) \int_\Rd uv \, dx , \qquad u,v \in \mathcal{K} .
\]
Clearly $a$ is symmetric and continuous on $\mathcal{K}$.

Ellipticity: $a(u,u) \geq \lv u \rv_\mathcal{K}^2$, since $V+2C+1 \geq 1+|V|$.

\medskip
The discrete spectral Theorem~\ref{th:spec} gives an ONB $\{ u_j \}$ for $L^2(\Rd)$ and corresponding eigenvalues which we denote $\gamma_j=E_j+2C+1>0$ satisfying
\[
\int_\Rd [ \, \nabla u_j \cdot \nabla v + Vu_j v ] \, dx = E_j \int_\Rd u_j v \, dx \qquad \forall v \in \mathcal{K} .
\]
In particular,
\[
-\Delta u_j + Vu_j = E_j u_j
\]
weakly (and hence classically, assuming smoothness of $V$), so that $u_j$ is an eigenfunction of the Schr\"{o}dinger operator $-\Delta + V$, with eigenvalue $E_j$. We have
\[
E_1 \leq E_2 \leq E_3 \leq \cdots \to \infty .
\]

The boundary condition $u_j \to 0$ at infinity is interpreted to mean, more precisely, that $u_j$ belongs to the space $H^1(\Rd) \cap L^2(|V| \, dx)$. This condition suffices to rule out the existence of any other eigenvalues for the harmonic oscillator, for example, as one can show by direct estimation \cite{S}.

The eigenvalues satisfy
\[
E_j = \frac{\int_\Rd \big( \, |\nabla u_j|^2 + Vu_j^2 \, \big) \, dx}{\int_\Rd u_j^2 \, dx} ,
\]
as we see by choosing $v=u_j$ in the weak formulation. Hence if $V \geq 0$ then the eigenvalues are all positive.

\chapter[Variational characterizations]{Variational characterizations of eigenvalues}

\subsubsection*{Goal} To obtain minimax and maximin characterizations of the eigenvalues of the sesquilinear form in Chapter~\ref{ch:eONB}.

\paragraph*{References} \cite{B} Section III.1.2

\paragraph*{Motivation and hypotheses.} How can one estimate the eigenvalues if the spectrum cannot be computed explicitly?
We will develop two complementary variational characterizations of eigenvalues. The intuition for these characterizations comes from
the special case of eigenvalues of a Hermitian (or real symmetric) matrix $A$, for which the sesquilinear form is $a(u,v)=Au \cdot \overline{v}$
and the first eigenvalue is
\[
\gamma_1 = \min_{v \neq 0} \frac{Av \cdot \overline{v}}{v \cdot \overline{v}} .
\]

We will work under the assumptions of the discrete spectral theorem in Chapter~\ref{ch:eONB}, for the sesquilinear form $a$.
Recall the ordering
\[
\gamma_1 \leq \gamma_2 \leq \gamma_3 \leq \cdots \to \infty .
\]

\subsection*{Poincar\'{e}'s minimax characterization of the eigenvalues} \

Define the \emph{Rayleigh quotient} of $u$ to be
\[
\frac{a(u,u)}{\la u,u \ra_\mathcal{H}} .
\]
We claim $\gamma_1$ equals the minimum value of the Rayleigh quotient:
\begin{equation} \label{eq:vc1}
\boxed{\gamma_1 = \min_{f \in \mathcal{K} \setminus \{ 0 \}} \frac{a(f,f)}{\la f,f \ra_\mathcal{H}}.}
\end{equation}
This characterization of the first eigenvalue is the \emph{Rayleigh principle}.

More generally, each eigenvalue is given by a minimax formula known as the \emph{Poincar\'{e} principle}:
\begin{equation} \label{eq:vc2}
\boxed{\gamma_j = \min_S \max_{f \in S \setminus \{ 0 \}} \frac{a(f,f)}{\la f,f \ra_\mathcal{H}}}
\end{equation}
where $S$ ranges over all $j$-dimensional subspaces of $\mathcal{K}$.

\medskip \noindent \emph{Remark.} The Rayleigh and Poincar\'{e} principles provide \emph{upper bounds} on eigenvalues,
since they expresses $\gamma_j$ as a minimum. More precisely, we obtain an upper bound on $\gamma_j$ by choosing $S$ to be any $j$-dimensional subspace and evaluating the maximum of the Rayleigh quotient over $f \in S$.

\medskip
\noindent \emph{Proof of Poincar\'{e} principle.} First we prove the Rayleigh principle for the first eigenvalue. Let $f \in \mathcal{K}$. Then $f$ can be expanded in terms of the ONB of eigenvectors as
\[
f = \sum_j c_j u_j
\]
where $c_j = \la f,u_j \ra_\mathcal{H}$. This series converges in both $\mathcal{H}$ and $\mathcal{K}$ (as we proved in Chapter~\ref{ch:eONB}). Hence we may substitute it into the Rayleigh quotient to obtain
\begin{align}
\frac{a(f,f)}{\la f,f \ra_\mathcal{H}}
& = \frac{\sum_{j,k} c_j \overline{c_k} a(u_j,u_k)}{\sum_{j,k} c_j \overline{c_k} \la u_j,u_k \ra_\mathcal{H}} \notag \\
& = \frac{\sum_j |c_j|^2 \gamma_j}{\sum_j |c_j|^2} \label{eq:poincare1}
\end{align}
since the eigenvectors $\{ u_j \}$ are orthonormal in $\mathcal{H}$ and the collection $\{ u_j/\sqrt{\gamma_j} \}$ is $a$-orthonormal in $\mathcal{K}$ (that is, $a(u_j,u_k)=\gamma_j \delta_{jk}$).
The expression \eqref{eq:poincare1} is obviously greater than or equal to $\gamma_1$, with equality when $f=u_1$, and so we have proved the Rayleigh principle \eqref{eq:vc1}.

Next we prove the minimax formula \eqref{eq:vc2} for $j=2$. (We leave the case of higher $j$-values as an exercise.)
Choose $S=\{ c_1 u_1 + c_2 u_2 : c_1,c_2 \text{\ scalars} \}$ to be the span of the first two eigenvectors. Then
\[
\max_{f \in S \setminus \{ 0 \}} \frac{a(f,f)}{\la f,f \ra_\mathcal{H}} = \max_{(c_1,c_2) \neq (0,0)} \frac{\sum_{j=1}^2 |c_j|^2 \gamma_j}{\sum_{j=1}^2 |c_j|^2} = \gamma_2 .
\]
Hence the minimum on the right side of \eqref{eq:vc2} is $\leq \gamma_2$.

To prove the opposite inequality, consider an arbitrary $2$-dimensional subspace $S \subset \mathcal{K}$.
This subspace contains a nonzero vector $g$ that is orthogonal to $u_1$ (since given a basis
$\{ v_1,v_2 \}$ for the subspace, we can find scalars $d_1,d_2$ not both zero such that $g=d_1 v_1 + d_2 v_2$ satisfies
$0 = d_1 \la v_1,u_1 \ra_\mathcal{H} + d_2 \la v_2,u_1 \ra_\mathcal{H} = \la g,u_1 \ra_\mathcal{H}$). Then $c_1=0$ in the expansion for $g$, and so by \eqref{eq:poincare1},
\[
\frac{a(g,g)}{\la g,g \ra_\mathcal{H}} = \frac{\sum_{j=2}^\infty |c_j|^2 \gamma_j}{\sum_{j=2}^\infty |c_j|^2} \geq \gamma_2 .
\]
Hence
\[
\max_{f \in S \setminus \{ 0 \}} \frac{a(f,f)}{\la f,f \ra_\mathcal{H}} \geq \frac{a(g,g)}{\la g,g \ra_\mathcal{H}} \geq \gamma_2 ,
\]
which implies that the minimum on the right side of \eqref{eq:vc2} is  $\geq \gamma_2$.
\qed

\paragraph*{Variational characterization of eigenvalue sums.}
The sum of the first $n$ eigenvalues has a simple ``minimum'' characterization, similar to the Rayleigh principle
for the first eigenvalue, but now involving pairwise orthogonal trial functions:
\begin{align}
& \gamma_1 + \cdots + \gamma_n \label{eq:vc3} \\
& = \min \Big\{ \frac{a(f_1,f_1)}{\la f_1,f_1 \ra_\mathcal{H}} + \cdots + \frac{a(f_n,f_n)}{\la f_n,f_n \ra_\mathcal{H}} : f_j \in \mathcal{K} \setminus \{ 0 \}, \la f_j,f_k \ra_\mathcal{H} = 0 \text{\ when $j \neq k$} \Big\} . \notag
\end{align}
See Bandle's book for the proof and related results \cite[Section III.1.2]{B}.

\subsection*{Courant's maximin characterization} \

The eigenvalues are given also by a maximin formula known as the \emph{Courant principle}:
\begin{equation} \label{eq:vc4}
\boxed{\gamma_j = \max_S \min_{f \in S^\perp \setminus \{ 0 \}} \frac{a(f,f)}{\la f,f \ra_\mathcal{H}}}
\end{equation}
where this time $S$ ranges over all $(j-1)$-dimensional subspaces of $\mathcal{K}$.

\medskip \noindent \emph{Remark.} The Courant principle provide \emph{lower bounds} on eigenvalues,
since it expresses $\gamma_j$ as a maximum. The lower bounds are difficult to compute, however, because $S^\perp$ is
an infinite dimensional space.

\medskip
\noindent \emph{Sketch of proof of Courant principle.} The Courant principle reduces to
Rayleigh's principle when $j=1$, since in that case $S$ is the zero subspace and $S^\perp=\mathcal{K}$.

Now take $j=2$ (we leave the higher values of $j$ as an exercise). For the ``$\leq$'' direction of the proof, we
choose $S$ to be the $1$-dimensional space spanned by the first eigenvector $u_1$. Then every $f \in S^\perp$
has $c_1=\la f , u_1 \ra_\mathcal{H}=0$ and so
\[
\gamma_2 \leq \min_{f \in S^\perp \setminus \{ 0 \}} \frac{a(f,f)}{\la f,f \ra_\mathcal{H}}
\]
by expanding $f=\sum_{j=2}^\infty c_j u_j$ and computing as in our proof of the Poincar\'{e} principle.

For the ``$\geq$'' direction of the proof, consider an arbitrary $1$-dimensional subspace $S$ of $\mathcal{K}$. Then $S^\perp$ contains some vector of the form $f=c_1 u_1 + c_2 u_2$ with at least one of $c_1$ or $c_2$ nonzero. Hence
\[
\min_{f \in S^\perp \setminus \{ 0 \}} \frac{a(f,f)}{\la f,f \ra_\mathcal{H}} \leq \frac{\sum_{j=1}^2 |c_j|^2 \gamma_j}{\sum_{j=1}^2 |c_j|^2} \leq \gamma_2 ,
\]
as desired.

\subsection*{Eigenvalues as critical values of the Rayleigh quotient}
Even if we did not know the existence of an ONB of eigenvectors we could still prove the Rayleigh principle,
by the following direct approach. Define $\gamma^*$ to equal the infimum of the Rayleigh quotient:
\[
\gamma^* = \inf_{f \in \mathcal{K} \setminus \{ 0 \}} \frac{a(f,f)}{\la f,f \ra_\mathcal{H}} .
\]
We will prove $\gamma^*$ is an eigenvalue. It follows that $\gamma^*$ is the lowest eigenvalue, $\gamma^*=\gamma_1$ (because if any eigenvector $f$ corresponded to a smaller eigenvalue, then the Rayleigh quotient of $f$ would be smaller than $\gamma^*$, a contradiction).

First, choose an infimizing sequence $\{ f_k \}$ normalized with $\lv f_k \rv_\mathcal{H} = 1$, so that
\[
a(f_k,f_k) \to \gamma^* .
\]
By weak compactness of closed balls in the Hilbert space $\mathcal{K}$, we may suppose $f_k$ converges weakly
in $\mathcal{K}$ to some $u \in \mathcal{K}$. Hence $f_k$ also converges weakly in $\mathcal{H}$ to $u$ (because if
$F(\cdot)$ is any bounded linear functional on $\mathcal{H}$ then it is also a bounded linear functional on $\mathcal{K}$).
We may further suppose $f_k$ converges in $\mathcal{H}$ to some $v \in \mathcal{H}$ (by compactness of the imbedding $\mathcal{K} \hookrightarrow \mathcal{H}$)
and then $f_k$ converges weakly in $\mathcal{H}$ to $v$, which forces $v=u$. To summarize: $f_k \rightharpoonup u$
weakly in $\mathcal{K}$ and $f_k \to u$ in $\mathcal{H}$. In particular, $\lv u \rv_\mathcal{H} = 1$. Therefore we have
\begin{align*}
0
& \leq a(f_k-u,f_k-u) \\
& = a(f_k,f_k) -2 \Re a(f_k,u) + a(u,u) \\
& \to \gamma^* - 2 \Re a(u,u) + a(u,u) \qquad \text{using weak convergence $f_k \rightharpoonup u$} \\
& = \gamma^* - a(u,u) \\
& \leq 0
\end{align*}
by definition of $\gamma^*$ as an infimum.

We have shown that the infimum defining $\gamma^*$ is actually a minimum,
\[
\gamma^* = \min_{f \in \mathcal{K} \setminus \{ 0 \}} \frac{a(f,f)}{\la f,f \ra_\mathcal{H}} ,
\]
and that the minimum is attained when $f=u$.

Our second task is to show $u$ is an eigenvector with eigenvalue $\gamma^*$. Let $v \in \mathcal{K}$ be arbitrary and use $f=u+\e v$ as a trial
function in the Rayleigh quotient; since $u$ gives the minimizer, the derivative at $\e=0$ must equal zero by
the first derivative test from calculus:
\[
0 = \left. \frac{d\ }{d\e} \, \frac{a(u+\e v, u+\e v)}{\la u+\e v, u+\e v \ra_\mathcal{H}} \, \right|_{\e=0} = 2\Re a(u,v) - \gamma^* 2\Re \la u,v \ra_\mathcal{H} .
\]
The same equation holds with $\Im$ instead of $\Re$, as we see by replacing $v$ with $iv$. (This last step is unnecessary when working with real Hilbert spaces, of course.) Hence
\[
a(u,v) = \gamma^* \la u,v \ra_\mathcal{H} \qquad \forall v \in \mathcal{K},
\]
which means $u$ is an eigenvector for the sesquilinear form $a$, with eigenvalue $\gamma^*$.

\emph{Aside.} The higher eigenvalues ($\gamma_j$ for $j>1$) can be obtained by a similar process, minimizing the Rayleigh quotient on the orthogonal complement of
the span of the preceding eigenfunctions $u_1,\ldots,u_{j-1}$. In particular,
\[
\gamma_2 = \min_{f \perp u_1} \frac{a(f,f)}{\la f,f \ra_\mathcal{H}}
\]
where $f \perp u_1$ means that the trial function $f \in \mathcal{K} \setminus \{ 0 \}$ is assumed orthogonal to $u_1$ in $\mathcal{H}$: $\la f,u_1 \ra_\mathcal{H} =0$.

\chapter[Monotonicity of eigenvalues]{Monotonicity properties of eigenvalues} \label{ch:monot}

\subsubsection*{Goal} To apply Poincar\'{e}'s minimax principle to the Laplacian and related operators, and hence to establish monotonicity results for Dirichlet and Neumann eigenvalues of the Laplacian, and a diamagnetic comparison for the magnetic Laplacian.

\paragraph*{References} \cite{B}

\subsection*{Laplacian, biLaplacian, and Schr\"{o}dinger operators}
Applying the Rayleigh principle \eqref{eq:vc1} to the examples in Chapters~\ref{ch:aONB}--\ref{ch:aSch} gives:
\begin{align*}
\lam_1 & = \min_{f \in H^1_0(\Omega)} \frac{\int_\Omega |\nabla f|^2 \, dx}{\int_\Omega f^2 \, dx} && \text{Dirichlet Laplacian on $\Omega$,} \\
\rho_1 & = \min_{f \in H^1(\Omega)} \frac{\int_\Omega |\nabla f|^2 \, dx + \sigma \int_{\partial \Omega} f^2 \, dS}{\int_\Omega f^2 \, dx} && \text{Robin Laplacian on $\Omega$,} \\
\mu_1 & = \min_{f \in H^1(\Omega)} \frac{\int_\Omega |\nabla f|^2 \, dx}{\int_\Omega f^2 \, dx} && \text{Neumann Laplacian on $\Omega$,}
\end{align*}
\begin{align*}
\Lambda_1 & = \min_{f \in H^2_0(\Omega)} \frac{\int_\Omega \sum_{m,n=1}^d f_{x_m x_n}^2 \, dx}{\int_\Omega f^2 \, dx} && \text{Dirichlet biLaplacian on $\Omega$} \\
& = \min_{f \in H^2_0(\Omega)} \frac{\int_\Omega (\Delta f)^2 \, dx}{\int_\Omega f^2 \, dx} , \\
\beta_1 & = \min_{f \in H^1_0(\Omega)} \frac{\int_\Rd |i\nabla f + \vec{A}f|^2 \, dx}{\int_\Rd |f|^2 \, dx} && \text{magnetic Laplacian} \\
E_1 & = \min_{f \in H^1(\Rd) \cap L^2(|V| \, dx)} \frac{\int_\Rd \big( |\nabla f|^2 + Vf^2 \big) \, dx}{\int_\Rd f^2 \, dx} && \text{Schr\"{o}dinger with potential} \\
& && \text{$V(x)$ growing to infinity.}
\end{align*}
The Poincar\'{e} principle applies too, giving formulas for the higher eigenvalues and hence implying certain monotonicity relations, as follows.

\subsubsection*{Neumann $\leq$ Robin $\leq$ Dirichlet}
Free membranes give lower tones than partially free and fixed membranes:
\begin{theorem}[Neumann--Robin--Dirichlet comparison] \label{th:nleqd}
Let $\Omega$ be a bounded domain in $\Rd$ with Lipschitz boundary, and fix $\sigma>0$.

Then the Neumann eigenvalues of the Laplacian lie below their Robin counterparts, which in turn lie below the Dirichlet eigenvalues:
\[
\mu_j \leq \rho_j \leq \lam_j \qquad \forall j \geq 1 .
\]
\end{theorem}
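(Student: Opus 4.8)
The plan is to prove the two inequalities $\mu_j \leq \rho_j$ and $\rho_j \leq \lam_j$ separately, in each case using the Poincar\'{e} minimax principle \eqref{eq:vc2}. The key structural fact is that all three operators share the same ambient Hilbert space $\mathcal{H}=L^2(\Omega)$ with the same inner product, and that their form domains are nested: $H^1_0(\Omega) \subset H^1(\Omega)$. Moreover, on the smaller domain $H^1_0(\Omega)$ the Robin form agrees with the Neumann (and Dirichlet) form, since the boundary term $\sigma \int_{\partial\Omega} f^2 \, dS$ vanishes for functions with zero trace.

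First I would prove $\mu_j \leq \rho_j$. By the Poincar\'{e} principle, $\rho_j = \min_S \max_{f \in S \setminus \{0\}} R_{\mathrm{Robin}}(f)$ where $S$ ranges over $j$-dimensional subspaces of $H^1(\Omega)$ and $R_{\mathrm{Robin}}(f) = \big(\int_\Omega |\nabla f|^2 \, dx + \sigma \int_{\partial\Omega} f^2 \, dS\big) / \int_\Omega f^2 \, dx$, while $\mu_j$ is the same minimax but with $R_{\mathrm{Neumann}}(f) = \int_\Omega |\nabla f|^2 \, dx / \int_\Omega f^2 \, dx$ over the same class of subspaces. Since $\sigma > 0$, we have $R_{\mathrm{Neumann}}(f) \leq R_{\mathrm{Robin}}(f)$ pointwise for every $f \in H^1(\Omega)$. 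Taking max over any fixed $S$ preserves the inequality, and then taking min over $S$ preserves it as well, giving $\mu_j \leq \rho_j$.

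Next, $\rho_j \leq \lam_j$. Here the Dirichlet minimax ranges only over $j$-dimensional subspaces $S \subset H^1_0(\Omega)$, a smaller collection than the $j$-dimensional subspaces of $H^1(\Omega)$ used for the Robin minimax. For any such $S \subset H^1_0(\Omega)$, every $f \in S$ has zero trace, so $R_{\mathrm{Robin}}(f) = R_{\mathrm{Dirichlet}}(f)$. Hence $\max_{f \in S} R_{\mathrm{Robin}}(f) = \max_{f \in S} R_{\mathrm{Dirichlet}}(f)$ for each Dirichlet-admissible $S$. Taking the minimum of the left side over the \emph{larger} class of Robin-admissible subspaces can only decrease it, so $\rho_j = \min_{S \subset H^1(\Omega)} \max_{f\in S} R_{\mathrm{Robin}}(f) \leq \min_{S \subset H^1_0(\Omega)} \max_{f\in S} R_{\mathrm{Robin}}(f) = \min_{S \subset H^1_0(\Omega)} \max_{f\in S} R_{\mathrm{Dirichlet}}(f) = \lam_j$.

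The argument is essentially bookkeeping once one has the minimax principle, so there is no deep obstacle; the main point requiring care is making sure the two minimax problems being compared are set up over the correct classes of subspaces and that the Rayleigh quotients are compared in the right direction. A subtlety worth stating explicitly is that enlarging the class of competitor subspaces decreases a minimax value, whereas increasing the Rayleigh quotient pointwise increases it — these are the two independent mechanisms behind the two inequalities. One should also note that all eigenvalue lists are written with multiplicity and in increasing order, which is exactly the convention under which the Poincar\'{e} principle \eqref{eq:vc2} was stated, so the index $j$ matches up correctly across the three problems.
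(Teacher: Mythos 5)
Your proof is correct and follows essentially the same route as the paper's: both inequalities come from the Poincar\'{e} minimax principle, with $\mu_j \leq \rho_j$ from the pointwise comparison of Rayleigh quotients over the same class of subspaces, and $\rho_j \leq \lam_j$ from restricting the competing subspaces to $H^1_0(\Omega)$, where the boundary term vanishes. Your write-up simply makes explicit the two mechanisms that the paper's shorter proof uses implicitly.
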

\begin{proof}
Poincar\'{e}'s minimax principle gives the formulas
\begin{align*}
\mu_j & = \min_U \max_{f \in U \setminus \{ 0 \}} \frac{\int_\Omega |\nabla f|^2 \, dx}{\int_\Omega f^2 \, dx} \\
\rho_j & = \min_T \max_{f \in T \setminus \{ 0 \}} \frac{\int_\Omega |\nabla f|^2 \, dx + \sigma \int_{\partial \Omega} f^2 \, dS}{\int_\Omega f^2 \, dx} \\
\lam_j & = \min_S \max_{f \in S \setminus \{ 0 \}} \frac{\int_\Omega |\nabla f|^2 \, dx}{\int_\Omega f^2 \, dx}
\end{align*}
where $S$ ranges over all $j$-dimensional subspaces of $H^1_0(\Omega)$, and $T$ and $U$ range over all $j$-dimensional subspaces of $H^1(\Omega)$.

Clearly $\mu_j \leq \rho_j$. Further, every subspace $S$ is also a valid $T$, since $H^1_0 \subset H^1$. Thus the minimum for $\rho_j$ is taken over a larger class of subspaces. Since for $f \in H^1_0$ the boundary term vanishes in the Rayleigh quotient for $\rho_j$, we conclude that $\rho_j \leq \lam_j$.
\end{proof}

\subsubsection*{Domain monotonicity for Dirichlet spectrum}
Making a drum smaller increases its frequencies of vibration:
\begin{theorem} \label{th:monotDir}
Let $\Omega$ and $\widetilde{\Omega}$ be bounded domains in $\Rd$, and denote the eigenvalues of the Dirichlet Laplacian on these domains by $\lam_j$ and $\widetilde{\lam}_j$, respectively.

If $\Omega \supset \widetilde{\Omega}$ then
\[
\lam_j \leq \widetilde{\lam}_j \qquad \forall j \geq 1.
\]
\end{theorem}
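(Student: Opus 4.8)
The plan is to apply Poincar\'e's minimax characterization \eqref{eq:vc2} to both domains and to exploit the fact that a trial function on the smaller domain extends by zero to a trial function on the larger domain without changing its Rayleigh quotient.

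First I would record the relevant minimax formulas, exactly as in the Neumann--Robin--Dirichlet comparison above:
\[
\lam_j = \min_S \max_{f \in S \setminus \{0\}} \frac{\int_\Omega |\nabla f|^2 \, dx}{\int_\Omega f^2 \, dx}, \qquad \widetilde{\lam}_j = \min_{\widetilde S} \max_{f \in \widetilde S \setminus \{0\}} \frac{\int_{\widetilde\Omega} |\nabla f|^2 \, dx}{\int_{\widetilde\Omega} f^2 \, dx},
\]
where $S$ ranges over all $j$-dimensional subspaces of $H^1_0(\Omega)$ and $\widetilde S$ over all $j$-dimensional subspaces of $H^1_0(\widetilde\Omega)$. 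Next I would set up the extension-by-zero map. Since $\widetilde\Omega \subset \Omega$, every function in $C^\infty_0(\widetilde\Omega)$ already lies in $C^\infty_0(\Omega)$ once extended by zero on $\Omega \setminus \widetilde\Omega$, and this extension preserves the $H^1$ inner product because every integrand vanishes where the function is set to zero. As $H^1_0$ is by definition the completion of $C^\infty_0$ in the $H^1$ norm, the extension map passes to an isometric linear injection $\iota : H^1_0(\widetilde\Omega) \to H^1_0(\Omega)$, and for every $g \in H^1_0(\widetilde\Omega)$ one has $\int_\Omega |\nabla(\iota g)|^2 \, dx = \int_{\widetilde\Omega} |\nabla g|^2 \, dx$ and $\int_\Omega (\iota g)^2 \, dx = \int_{\widetilde\Omega} g^2 \, dx$, so the Rayleigh quotient is unchanged by $\iota$.

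Then the comparison is immediate. Given any $j$-dimensional subspace $\widetilde S$ of $H^1_0(\widetilde\Omega)$, its image $\iota(\widetilde S)$ is a $j$-dimensional subspace of $H^1_0(\Omega)$ on which the Rayleigh quotient takes exactly the same set of values; hence
\[
\lam_j = \min_S \max_{f \in S \setminus \{0\}} \frac{\int_\Omega |\nabla f|^2 \, dx}{\int_\Omega f^2 \, dx} \leq \max_{f \in \iota(\widetilde S) \setminus \{0\}} \frac{\int_\Omega |\nabla f|^2 \, dx}{\int_\Omega f^2 \, dx} = \max_{g \in \widetilde S \setminus \{0\}} \frac{\int_{\widetilde\Omega} |\nabla g|^2 \, dx}{\int_{\widetilde\Omega} g^2 \, dx}.
\]
Taking the minimum over all such $\widetilde S$ yields $\lam_j \leq \widetilde\lam_j$, as claimed.

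The only genuinely delicate point is verifying that extension by zero really lands inside $H^1_0(\Omega)$, i.e.\ that no spurious derivative mass is created along $\partial\widetilde\Omega$. With the definition of $H^1_0$ adopted in this excerpt (the completion of $C^\infty_0$), this is painless: one extends only smooth compactly supported functions and then passes to the Cauchy-sequence limit. If one instead worked with the intrinsic ``vanishing trace'' characterization of $H^1_0$, one would need the extra observation that the zero-extension of a function with zero trace has weak gradient equal to the zero-extension of the original gradient --- true, but requiring a short argument, which is precisely why the completion definition is convenient here.
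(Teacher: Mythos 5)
Your proposal is correct and follows essentially the same route as the paper: both apply Poincar\'e's minimax principle to the two domains and observe that extension by zero of $C^\infty_0(\widetilde\Omega)$-approximants turns every $j$-dimensional subspace of $H^1_0(\widetilde\Omega)$ into an admissible competitor subspace of $H^1_0(\Omega)$ with the same Rayleigh quotients. Your write-up merely makes explicit (and correctly) the isometry of the zero-extension map and the caveat about the completion definition of $H^1_0$, which the paper compresses into a parenthetical remark.
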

\begin{proof}
Poincar\'{e}'s minimax principle gives that
\begin{align*}
\lam_j & = \min_S \max_{f \in S \setminus \{ 0 \}} \frac{\int_\Omega |\nabla f|^2 \, dx}{\int_\Omega f^2 \, dx} \\
\widetilde{\lam}_j & = \min_{\widetilde{S}} \max_{f \in \widetilde{S} \setminus \{ 0 \}} \frac{\int_{\widetilde{\Omega}} |\nabla f|^2 \, dx}{\int_{\widetilde{\Omega}} f^2 \, dx}
\end{align*}
where $S$ ranges over all $j$-dimensional subspaces of $H^1_0(\Omega)$ and $\widetilde{S}$ ranges over all $j$-dimensional subspaces of $H^1_0(\widetilde{\Omega})$.

Every subspace $\widetilde{S}$ is also a valid $S$, since $H^1_0(\widetilde{\Omega}) \subset H^1_0(\Omega)$ (noting that any approximating function in $C^\infty_0(\widetilde{\Omega})$ belongs also to $C^\infty_0(\Omega)$ by extension by $0$.) Therefore $\lam_j \leq \widetilde{\lam}_j$.
\end{proof}

\subsubsection*{Restricted reverse monotonicity for Neumann spectrum}
The monotonicity proof breaks down in the Neumann case because $H^1(\widetilde{\Omega})$ is not a subspace of $H^1(\Omega)$. More precisely, while one can extend a function in $H^1(\widetilde{\Omega})$ to belong to $H^1(\Omega)$, the extended function must generally be nonzero outside $\widetilde{\Omega}$, and so its $L^2$ norm and Dirichlet integral will differ from those of the original function.

Furthermore, counterexamples to domain monotonicity are easy to construct for Neumann eigenvalues, as the figure below shows
with a rectangle contained in a square. In that example, the square has side length $1$ and hence $\mu_2=\pi^2$, while the rectangle has
side length $\sqrt{2}(0.9)$ and so $\widetilde{\mu}_2 = \pi^2/(1.62)$, which is smaller than $\mu_2$.
\begin{figure}[h]
\begin{center}
\includegraphics[width=0.3\textwidth]{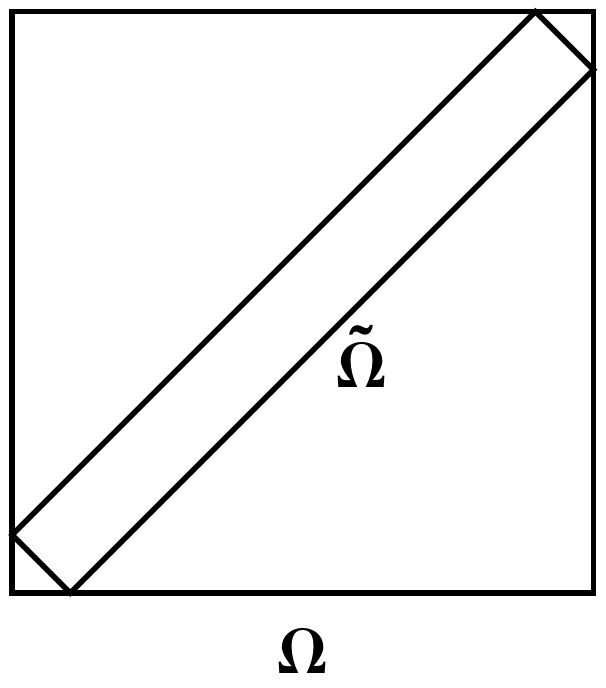}
\end{center}
\end{figure}

Nonetheless, monotonicity does holds in a certain restricted situation, although the inequality is reversed --- the smaller drum has lower tones:
\begin{theorem} \label{th:monotNeu}
Let $\Omega$ and $\widetilde{\Omega}$ be bounded Lipschitz domains in $\Rd$, and denote the eigenvalues of the Neumann Laplacian on these domains by $\mu_j$ and $\widetilde{\mu}_j$, respectively.

If $\widetilde{\Omega} \subset \Omega$ and $\Omega \setminus \widetilde{\Omega}$ has measure zero, then
\[
\widetilde{\mu}_j \leq \mu_j \qquad \forall j \geq 1.
\]
\end{theorem}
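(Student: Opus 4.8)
The plan is to run Poincar\'{e}'s minimax principle on both domains, exploiting the fact that because $\Omega \setminus \widetilde{\Omega}$ is a null set, the space $H^1(\Omega)$ sits inside $H^1(\widetilde{\Omega})$ via restriction, with the Rayleigh quotient unchanged. This is the mirror image of the domain monotonicity argument for the Dirichlet Laplacian (Theorem~\ref{th:monotDir}): there one used that $H^1_0$ of the \emph{smaller} domain embeds into $H^1_0$ of the larger; here one uses that $H^1$ of the \emph{larger} domain embeds into $H^1$ of the smaller (``smaller'' meaning with a null set, such as a slit, deleted), and this reverses the inequality.

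First I would pin down the function spaces. Since $\Omega \setminus \widetilde{\Omega}$ has measure zero, $L^2(\Omega)$ and $L^2(\widetilde{\Omega})$ are canonically the same Hilbert space. If $f \in H^1(\Omega)$, then its restriction to $\widetilde{\Omega}$ has a weak gradient in $L^2(\widetilde{\Omega})$ (just restrict the class of test functions to those supported in $\widetilde{\Omega}$), so $f|_{\widetilde{\Omega}} \in H^1(\widetilde{\Omega})$; moreover $\int_{\widetilde{\Omega}} f^2 \, dx = \int_\Omega f^2 \, dx$ and $\int_{\widetilde{\Omega}} |\nabla f|^2 \, dx = \int_\Omega |\nabla f|^2 \, dx$, because the integrands differ only on a null set. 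Hence restriction $R : H^1(\Omega) \to H^1(\widetilde{\Omega})$ is a linear map preserving the $L^2$ norm (so it is injective) and the Dirichlet integral, and therefore preserving the Rayleigh quotient $\int |\nabla f|^2 \, dx \big/ \int f^2 \, dx$.

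Next I would compare the minimax values. Poincar\'{e}'s principle gives
\[
\mu_j = \min_U \max_{f \in U \setminus \{ 0 \}} \frac{\int_\Omega |\nabla f|^2 \, dx}{\int_\Omega f^2 \, dx}, \qquad \widetilde{\mu}_j = \min_{\widetilde{U}} \max_{f \in \widetilde{U} \setminus \{ 0 \}} \frac{\int_{\widetilde{\Omega}} |\nabla f|^2 \, dx}{\int_{\widetilde{\Omega}} f^2 \, dx},
\]
where $U$ ranges over all $j$-dimensional subspaces of $H^1(\Omega)$ and $\widetilde{U}$ over all $j$-dimensional subspaces of $H^1(\widetilde{\Omega})$. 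Given a $j$-dimensional $U \subset H^1(\Omega)$, its image $R(U)$ is a $j$-dimensional subspace of $H^1(\widetilde{\Omega})$ (by injectivity of $R$) on which the Rayleigh quotients coincide with those on $U$, so the maximum over $R(U)$ equals the maximum over $U$. Since $\widetilde{\mu}_j$ is a minimum over a collection of subspaces that includes every such $R(U)$, we conclude $\widetilde{\mu}_j \leq \mu_j$.

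I do not expect a serious technical obstacle; the main points to be careful about are that the restriction genuinely lands in $H^1(\widetilde{\Omega})$ and is injective, and that the Lipschitz hypothesis on both domains is needed only so that the discrete spectral Theorem~\ref{th:spec} applies and $\mu_j$, $\widetilde{\mu}_j$ are honest eigenvalues. It is worth emphasizing in the writeup why this does not contradict the failure of Neumann domain monotonicity shown by the rectangle-in-square example: there $\Omega \setminus \widetilde{\Omega}$ has positive measure, restriction changes both the $L^2$ norm and the Dirichlet integral, and there is no inclusion of function spaces in either direction. The inclusion $H^1(\Omega) \subset H^1(\widetilde{\Omega})$ exploited here may well be strict --- a function on a slit domain can jump across the slit --- which is precisely the mechanism by which $\widetilde{\mu}_j$ can drop strictly below $\mu_j$.
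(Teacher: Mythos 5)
Your proposal is correct and follows essentially the same route as the paper: both apply Poincar\'{e}'s minimax principle and observe that restriction maps each $j$-dimensional subspace of $H^1(\Omega)$ to a $j$-dimensional subspace of $H^1(\widetilde{\Omega})$ with the same Rayleigh quotients, since the integrals are unchanged over a null set. Your extra care about injectivity of the restriction and the contrast with the rectangle-in-square counterexample is sound but not a different argument.
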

One might imagine the smaller domain $\widetilde{\Omega}$ as being constructed by removing a hypersurface of measure zero from $\Omega$, thus introducing an additional boundary surface.
Reverse monotonicity then makes perfect sense, because the additional boundary, on which values are not
specified for the eigenfunctions, enables the eigenfunctions to ``relax'' and hence lowers the eigenvalues.
\begin{figure}[h]
\begin{center}
\includegraphics[width=0.4\textwidth]{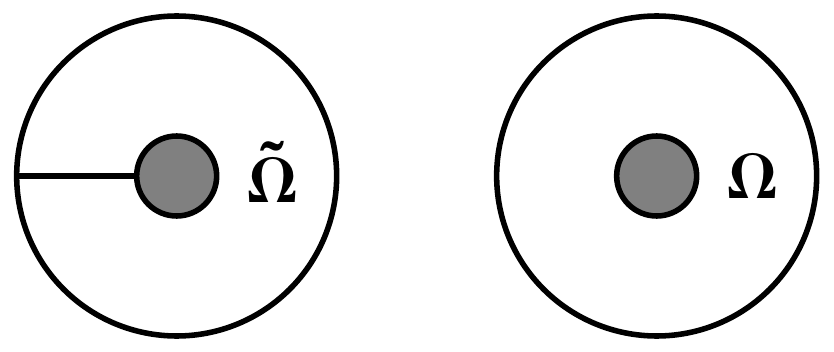}
\end{center}
\end{figure}

Introducing additional boundary surfaces to a Dirichlet problem would have the opposite effect: the eigenfunctions would be further constrained, and the eigenvalues raised.
\begin{proof}
Poincar\'{e}'s minimax principle gives that
\begin{align*}
\mu_j & = \min_S \max_{f \in S \setminus \{ 0 \}} \frac{\int_\Omega |\nabla f|^2 \, dx}{\int_\Omega f^2 \, dx} \\
\widetilde{\mu}_j & = \min_{\widetilde{S}} \max_{f \in \widetilde{S} \setminus \{ 0 \}} \frac{\int_\Omega |\nabla f|^2 \, dx}{\int_\Omega f^2 \, dx}
\end{align*}
where $S$ ranges over all $j$-dimensional subspaces of $H^1(\Omega)$ and $\widetilde{S}$ ranges over all $j$-dimensional subspaces of $H^1(\widetilde{\Omega})$.

Every subspace $S$ is also a valid $\widetilde{S}$, since each $f \in H^1(\Omega)$ restricts to a function in $H^1(\widetilde{\Omega})$ that has the same $H^1$-norm (using here that $\Omega \setminus \widetilde{\Omega}$ has measure zero). Therefore $\widetilde{\mu}_j \leq \mu_j$.
\end{proof}

\subsubsection*{Diamagnetic comparison for the magnetic Laplacian}

Imposing a magnetic field always raises the ground state energy.
\begin{theorem}[Diamagnetic comparison] \label{th:diamag}
\[
\beta_1 \geq \lam_1
\]
\end{theorem}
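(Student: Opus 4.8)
The plan is to derive $\beta_1 \geq \lam_1$ from Rayleigh's principle together with the \emph{diamagnetic inequality}, which asserts that replacing a complex trial function by its modulus can only decrease the Dirichlet energy. The first step is to show that for every $u \in H^1_0(\Omega;\C)$ the modulus $|u|$, viewed as a real-valued function, belongs to $H^1_0(\Omega)$ and satisfies
\[
\big| \nabla |u| \big| \leq \big| (i\nabla + \vec{A}) u \big| \qquad \text{a.e.\ in } \Omega .
\]

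To establish the pointwise bound, abbreviate $D = i\nabla + \vec{A}$, so that $\nabla u = -iDu + i\vec{A}u$. Since $\vec{A}$ is real-valued, the quantity $i\vec{A}|u|^2$ is purely imaginary, and therefore $\Re(\overline{u}\,\nabla u) = \Re(-i\,\overline{u}\,Du) = \Im(\overline{u}\,Du)$. On the other hand, the chain rule for Sobolev functions gives $|u|\,\nabla|u| = \Re(\overline{u}\,\nabla u)$ on the set $\{u \neq 0\}$, together with $\nabla|u| = 0$ almost everywhere on $\{u = 0\}$; here I would invoke the standard fact (see \cite{E} or \cite{GT}) that $u \in H^1$ implies $|u| \in H^1$ with exactly this gradient. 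Consequently, wherever $u \neq 0$,
\[
\big| \nabla|u| \big| = \frac{|\Im(\overline{u}\,Du)|}{|u|} \leq \frac{|u|\,|Du|}{|u|} = |Du| ,
\]
and the inequality is trivial on $\{u=0\}$. That $|u|$ again vanishes on $\partial\Omega$ follows because $u \mapsto |u|$ maps $C^\infty_0(\Omega)$ into $H^1_0(\Omega)$ and is continuous on $H^1$ (one may approximate $|u|$ by $\sqrt{|u|^2 + \e^2} - \e$).

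Granting the diamagnetic inequality, the conclusion is one line. By the Rayleigh principle for the magnetic Laplacian recorded in Chapter~\ref{ch:monot}, and then the bound just proved, for every $f \in H^1_0(\Omega;\C) \setminus \{0\}$ we have
\[
\frac{\int_\Omega |(i\nabla + \vec{A})f|^2 \, dx}{\int_\Omega |f|^2 \, dx} \geq \frac{\int_\Omega |\nabla |f||^2 \, dx}{\int_\Omega |f|^2 \, dx} \geq \lam_1 ,
\]
where the final inequality is the Rayleigh principle for the Dirichlet Laplacian applied to the admissible real trial function $|f| \in H^1_0(\Omega)$ (noting $\int_\Omega |f|^2 \, dx = \int_\Omega \big||f|\big|^2 \, dx$). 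Taking the infimum over $f$ gives $\beta_1 \geq \lam_1$.

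The main obstacle is the diamagnetic inequality itself, specifically the measure-theoretic care needed to justify the chain rule for $|u|$ when $u$ is only a complex Sobolev function --- controlling the behavior on the nodal set $\{u=0\}$ and dealing with the corner of $t \mapsto |t|$ at the origin --- and the verification that $|u|$ inherits the zero boundary values. Once that is in place, nothing but the Rayleigh principle is needed.
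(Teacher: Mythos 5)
Your proof is correct, and its overall structure is the same as the paper's: establish the pointwise diamagnetic inequality $\bigl|\nabla|u|\bigr| \leq |(i\nabla+\vec{A})u|$, then feed it into the Rayleigh principles for $\beta_1$ and $\lam_1$. Where you differ is in how the pointwise lemma is proved. The paper writes $f=Re^{i\Theta}$ in polar form and computes $|i\nabla f+\vec{A}f|^2 = |\nabla R|^2 + R^2|\nabla\Theta-\vec{A}|^2 \geq |\nabla R|^2$ --- slick, but formal, since the phase $\Theta$ is undefined (and certainly not an $H^1$ function) on the nodal set of $f$. You instead use the Sobolev chain-rule identity $|u|\,\nabla|u| = \Re(\overline{u}\,\nabla u) = \Im(\overline{u}\,Du)$ together with $\nabla|u|=0$ a.e.\ on $\{u=0\}$, which is the standard rigorous route (as in Lieb--Loss) and handles the nodal set cleanly. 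You also explicitly verify that $|f|$ lies in $H^1_0(\Omega)$ so that it is an admissible trial function for $\lam_1$; the paper instead remarks that the first Dirichlet eigenfunction can be taken nonnegative, which is not really the point needed --- your check is the relevant one. In short: same strategy, with a more careful proof of the key lemma.
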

First we prove a pointwise comparison.
\begin{lemma}[Diamagnetic inequality] \label{le:diamag}
\[
|(i\nabla+\vec{A})f| \geq \big| \nabla |f| \big|
\]
\end{lemma}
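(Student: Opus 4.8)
The plan is to prove the inequality pointwise (almost everywhere), which is all that is needed for Theorem~\ref{th:diamag}: once we know $\big|\nabla|f|\big| \leq |(i\nabla+\vec{A})f|$ pointwise, feeding the real-valued function $|f|$ into the Rayleigh quotient for $\lam_1$ does the rest. The heart of the matter is a pointwise algebraic identity expressing $\nabla|f|$ through the magnetic gradient $(i\nabla+\vec{A})f$. To sidestep the phase of $f$ (which is only locally defined, and only where $f \neq 0$), I would argue directly: at any point where $f \neq 0$ we have $2|f|\,\nabla|f| = \nabla\big(|f|^2\big) = 2\Re(\overline{f}\,\nabla f)$, hence $|f|\,\nabla|f| = \Re(\overline{f}\,\nabla f)$.

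Next I would bring the vector potential into this identity. Since $\vec{A}$ is real-valued, $\Re\big(i|f|^2\vec{A}\big) = 0$, so $\Re(\overline{f}\,\nabla f) = \Re\big(\overline{f}(\nabla f - i\vec{A}f)\big)$. Using the elementary rewriting $\nabla f - i\vec{A}f = -i\,(i\nabla+\vec{A})f$, this becomes $\Re\big(-i\,\overline{f}\,(i\nabla+\vec{A})f\big) = \Im\big(\overline{f}\,(i\nabla+\vec{A})f\big)$. Thus $|f|\,\nabla|f| = \Im\big(\overline{f}\,(i\nabla+\vec{A})f\big)$, and estimating componentwise with $|\Im z| \leq |z|$ followed by the obvious sum gives $|f|\,\big|\nabla|f|\big| \leq \big|\overline{f}\big|\,\big|(i\nabla+\vec{A})f\big| = |f|\,\big|(i\nabla+\vec{A})f\big|$. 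Dividing by $|f|$ yields the claim at every point where $f \neq 0$.

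The main obstacle is the set $\{f = 0\}$, where $|f|$ need not be classically differentiable and the division above is illegitimate. The clean fix is to regularize: set $|f|_\e = \big(|f|^2 + \e^2\big)^{1/2}$, so that $|f|_\e\,\nabla|f|_\e = \Re(\overline{f}\,\nabla f) = \Im\big(\overline{f}(i\nabla+\vec{A})f\big)$ now holds \emph{everywhere}, whence $\big|\nabla|f|_\e\big| \leq \tfrac{|f|}{|f|_\e}\,\big|(i\nabla+\vec{A})f\big| \leq \big|(i\nabla+\vec{A})f\big|$. Letting $\e \to 0$, one has $|f|_\e \to |f|$ and (for $f \in H^1$, with weak derivatives) $\nabla|f|_\e \to \nabla|f|$ in $L^2_{\mathrm{loc}}$, so the inequality passes to the limit. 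Equivalently, one may invoke the standard Sobolev fact that $\nabla|f| = 0$ almost everywhere on $\{f = 0\}$, which makes the inequality trivially true there.

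A few bookkeeping points would round out the argument: $f$ is taken in $H^1$ (complex-valued), all derivatives are weak, and one should record that $|f| \in H^1$ with membership in $H^1_0$ inherited from $f$ — this is exactly what makes $|f|$ an admissible trial function for $\lam_1$ in the proof of Theorem~\ref{th:diamag}. I expect the regularization and limiting step to be the only genuinely delicate part of the proof; the algebraic identity itself is a one-line computation.
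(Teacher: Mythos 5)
Your proof is correct, but it takes a genuinely different route from the paper's. The paper writes $f=Re^{i\Theta}$ in polar form, factors the unimodular phase out of $i\nabla f+\vec A f$, and observes that $i\nabla R$ is purely imaginary while $(\vec A-\nabla\Theta)R$ is real, so the squared modulus splits by Pythagoras into $|\nabla R|^2+R^2|\nabla\Theta-\vec A|^2$; dropping the second term gives the lemma. That computation is short and makes the discarded quantity (and hence the equality case $\nabla\Theta=\vec A$) completely explicit, but it is formal: $\Theta$ is only defined where $f\neq 0$ and only modulo $2\pi$, and the differentiability of $R$ and $\Theta$ is assumed without comment. Your argument instead runs through the identity $|f|\,\nabla|f|=\Re(\overline f\,\nabla f)=\Im\bigl(\overline f\,(i\nabla+\vec A)f\bigr)$ together with $|\Im z|\le|z|$, and then handles the zero set by the $|f|_\e=(|f|^2+\e^2)^{1/2}$ regularization (or the fact that $\nabla|f|=0$ a.e.\ on $\{f=0\}$). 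This is the standard rigorous (Kato/Lieb--Loss) proof: it works for arbitrary $f\in H^1$ with weak derivatives, needs no phase function, and is exactly what justifies using $|f|$ as an $H^1_0$ trial function in Theorem~\ref{th:diamag}. The price is that the geometric content --- that one is discarding a nonnegative ``current'' term --- is slightly less visible than in the polar-form computation. Both proofs are sound; yours is the more careful one at the level of regularity.
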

\begin{proof}[Proof of Lemma~\ref{le:diamag}]
Write $f$ in polar form as $f=Re^{i\Theta}$. Then
\begin{align*}
|i\nabla f +\vec{A}f|^2
& = |i e^{i\Theta} \nabla R - Re^{i\Theta} \nabla \Theta + \vec{A}Re^{i\Theta}|^2 \\
& = |i \nabla R - R \nabla \Theta + \vec{A} R|^2 \\
& = |\nabla R|^2 + R^2 |\nabla \Theta - \vec{A}|^2 \\
& \geq |\nabla R|^2 = \big| \nabla |f| \big|^2 .
\end{align*}
\end{proof}
\begin{proof}[Proof of Theorem~\ref{th:diamag}]
The proof is immediate from the diamagnetic inequality in Lemma~\ref{le:diamag} and the Rayleigh principles for $\beta_1$ and $\lam_1$ at the beginning of this chapter. Note we can assume $f \geq 0$ in the Rayleigh principle for $\lam_1$, since the first Dirichlet eigenfunction can be taken nonnegative \cite[Theorem 8.38]{GT}.
\end{proof}

\chapter[Weyl's asymptotic]{Weyl's asymptotic for high eigenvalues} \label{ch:weyl}

\subsubsection*{Goal} To determine the rate of growth of eigenvalues of the Laplacian.

\paragraph*{References} \cite{A}; \cite{CH} Section VI.4

\subsubsection*{Notation}

The asymptotic notation $\alpha_j \sim \beta_j$ means
\[
\lim_{j \to \infty} \frac{\alpha_j}{\beta_j} = 1 .
\]
Write $V_d$ for the volume of the unit ball in $d$-dimensions.

\subsection*{Growth of eigenvalues}
The eigenvalues of the Laplacian grow at a rate $c j^{2/d}$ where the constant depends only on the volume of the domain, independent of the boundary conditions.
\begin{theorem}[Weyl's law] \label{th:weyl}
Let $\Omega$ be a bounded domain in $\Rd$ with piecewise smooth boundary. As $j \to \infty$ the eigenvalues grow according to:
\[
\lam_j \sim \rho_j \sim \mu_j \sim
\begin{cases}
\left( \pi j/|\Omega| \right)^2 & (d=1) \\
4\pi j/|\Omega| & (d=2) \\
\big( 6\pi^2 j/|\Omega| \big)^{2/3} & (d=3)
\end{cases}
\]
and more generally,
\[
\lam_j \sim \rho_j \sim \mu_j \sim 4\pi^2 \left( \frac{j}{V_d |\Omega|} \right)^{\! \! 2/d} \qquad (d \geq 1).
\]
Here $|\Omega|$ denotes the $d$-dimensional volume of the domain, in other words its length when $d=1$ and area when $d=2$.
\end{theorem}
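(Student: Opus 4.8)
The plan is to prove Weyl's law by a Dirichlet--Neumann bracketing argument, reducing the general domain to a disjoint union of cubes, on which the eigenvalue counting function is essentially the lattice point count handled already in Proposition~\ref{pr:rectweyl}. The organizing object is the counting function $N_\Omega^D(\alpha) = \#\{j : \lam_j(\Omega) \leq \alpha\}$ for the Dirichlet Laplacian and $N_\Omega^N(\alpha)$ for the Neumann Laplacian. By Lemma~\ref{le:inversion} (suitably adapted to the $\alpha^{d/2}$ power, or applied after the substitution $\alpha \mapsto \alpha^{d/2}$), it suffices to prove the single asymptotic $N_\Omega^D(\alpha) \sim N_\Omega^N(\alpha) \sim V_d |\Omega| (\alpha/4\pi^2)^{d/2}$ as $\alpha \to \infty$, since inverting this relation yields exactly the stated growth rate for $\lam_j$, $\mu_j$, and $\rho_j$ (the Robin eigenvalues being squeezed between Dirichlet and Neumann by Theorem~\ref{th:nleqd}).

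First I would establish the single-cube case: for a cube $Q$ of side $s$, separation of variables gives eigenvalues $\pi^2 s^{-2}(k_1^2 + \cdots + k_d^2)$, so $N_Q^D(\alpha)$ counts lattice points $k \in \N^d$ inside the ball of radius $s\sqrt{\alpha}/\pi$, and the same area/volume comparison as in the proof of Proposition~\ref{pr:rectweyl} (associating to each lattice point a unit cube) gives $N_Q^D(\alpha) = V_d(s\sqrt\alpha/\pi)^d/2^d + O(\alpha^{(d-1)/2})$, with the analogous statement for $N_Q^N(\alpha)$ using $k \in \{0,1,2,\ldots\}^d$; the leading term is $|Q|(\alpha/4\pi^2)^{d/2} V_d$ in both cases. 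Next I would set up the bracketing: given $\Omega$ with piecewise smooth boundary, for each small $\delta>0$ tile $\Rd$ by a grid of cubes of side $\delta$, let $\Omega_-$ be the union of the closed cubes contained in $\Omega$ and $\Omega_+$ the union of closed cubes meeting $\Omega$. The inclusions $\Omega_- \subset \Omega \subset \Omega_+$ together with domain monotonicity for Dirichlet eigenvalues (Theorem~\ref{th:monotDir}) give $N_{\Omega_+}^D(\alpha) \leq N_\Omega^D(\alpha)$... wait, the correct direction: larger domain has smaller Dirichlet eigenvalues hence larger counting function, so $N_{\Omega_-}^D(\alpha) \leq N_\Omega^D(\alpha) \leq N_{\Omega_+}^D(\alpha)$. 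For Neumann one uses that $N^N$ of a disjoint union of cubes is the sum of the individual counts (the Neumann Laplacian on a disconnected domain decouples), together with the reverse bracketing $N_\Omega^N(\alpha) \leq N_{\Omega_-}^N(\alpha) + (\text{contribution of the boundary layer})$ — more carefully, one uses Neumann bracketing: cutting $\Omega$ along the grid hyperplanes only lowers Neumann eigenvalues (a variant of Theorem~\ref{th:monotNeu}), so $N_\Omega^N(\alpha) \leq \sum_{Q \cap \Omega \neq \emptyset} N_{Q \cap \Omega}^N(\alpha)$, while from below $N_\Omega^N(\alpha) \geq N_{\Omega_-}^N(\alpha) = \sum_{Q \subset \Omega} N_Q^N(\alpha)$.

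Summing the single-cube asymptotics over the roughly $|\Omega_\pm|/\delta^d$ cubes gives $N_{\Omega_\pm}(\alpha) = |\Omega_\pm| V_d (\alpha/4\pi^2)^{d/2} + O(|\Omega_\pm| \delta^{-d} \cdot \alpha^{(d-1)/2})$. Dividing by $\alpha^{d/2}$ and letting $\alpha \to \infty$ first, the error term vanishes, leaving $\limsup$ and $\liminf$ of $N_\Omega(\alpha)/\alpha^{d/2}$ trapped between $|\Omega_-| V_d/(4\pi^2)^{d/2}$ and $|\Omega_+| V_d/(4\pi^2)^{d/2}$; then letting $\delta \to 0$ and using that $|\Omega_+| - |\Omega_-| \to 0$ (this is where piecewise smoothness of $\partial\Omega$ enters: the boundary has $d$-dimensional measure zero and is covered by $O(\delta^{-(d-1)})$ cubes of volume $\delta^d$, so $|\Omega_+ \setminus \Omega_-| = O(\delta) \to 0$) pins the limit to $|\Omega| V_d/(4\pi^2)^{d/2}$. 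Finally invert via Lemma~\ref{le:inversion}. \textbf{The main obstacle} is the Neumann bracketing step: one must justify that subdividing a domain along hyperplanes can only decrease Neumann eigenvalues (so that the counting function over the piece-union dominates that of $\Omega$), which requires the observation that $H^1(\Omega) \subset \bigoplus_Q H^1(Q \cap \Omega)$ as a form-domain inclusion, together with care that the pieces $Q \cap \Omega$ near the boundary are still Lipschitz so the discrete spectral theorem applies to them — handling these irregular boundary pieces cleanly, rather than the interior cubes, is the delicate part, and is exactly why the hypothesis restricts to piecewise smooth boundaries.
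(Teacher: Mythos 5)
Your overall strategy --- Dirichlet--Neumann bracketing over a grid of cubes, single-cube counting-function asymptotics as in Proposition~\ref{pr:rectweyl}, squeezing the Robin eigenvalues via Theorem~\ref{th:nleqd}, and inverting with a power-adapted Lemma~\ref{le:inversion} --- is exactly the paper's approach, and the Dirichlet half is assembled correctly. But there are two problems in the Neumann half. First, a genuine error: your lower bound $N^N_\Omega(\alpha) \geq N^N_{\Omega_-}(\alpha)$ asserts $\mu_j(\Omega) \leq \mu_j(\Omega_-)$ for $\Omega_- \subset \Omega$, i.e.\ direct domain monotonicity for Neumann eigenvalues. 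That is false in general --- the rectangle-inside-a-square counterexample preceding Theorem~\ref{th:monotNeu} refutes it --- and Theorem~\ref{th:monotNeu} does not rescue you, because $\Omega \setminus \Omega_-$ is the boundary layer, which has positive measure. The correct route to the lower bound is through the Dirichlet counting function: $N^N_\Omega(\alpha) \geq N^D_\Omega(\alpha) \geq N^D_{\Omega_-}(\alpha) = \sum_{Q \subset \Omega} N^D_Q(\alpha)$, using Theorem~\ref{th:nleqd} and Theorem~\ref{th:monotDir}; the right-hand sum already carries the correct leading term as $\delta \to 0$.

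Second, the step you flag as the main obstacle --- the upper bound $N^N_\Omega(\alpha) \leq \sum_{Q \cap \Omega \neq \emptyset} N^N_{Q\cap\Omega}(\alpha)$ together with Weyl-type control of the counting functions of the irregular pieces $Q \cap \Omega$ --- is indeed where the real work lies, and your proposal does not close it. The form-domain inclusion $H^1(\Omega) \hookrightarrow \bigoplus_Q H^1(Q\cap\Omega)$ gives the inequality, but you then need a uniform estimate $N^N_{Q\cap\Omega}(\alpha) = O(\alpha^{d/2})$ for the non-cubical boundary pieces, and this is precisely where the Neumann Weyl law can fail for rough domains; asserting that the pieces are Lipschitz only guarantees discreteness of their spectra, not a quantitative bound. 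The paper sidesteps this entirely: its Step 2 treats only domains that are, up to a set of measure zero, finite unions of rectangles (so Theorem~\ref{th:monotNeu} applies directly), and it defers the approximation of a general piecewise-smooth domain to Courant--Hilbert. So your outline has the right shape, but as written it contains one false inequality and one unproved estimate, both in the Neumann bracketing.
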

In $1$ dimension the theorem is proved by the explicit formulas for the eigenvalues in Chapter~\ref{ch:lce}. We will prove the theorem in $2$ dimensions, by a technique known as ``Dirichlet--Neumann bracketing''. The higher dimensional proof is similar.

An alternative proof using small-time heat kernel asymptotics can be found (for example) in the survey paper by Arendt \emph{et al.} \cite[\S1.6]{A}.

\begin{proof}[Proof of Weyl aymptotic --- Step 1: rectangular domains]
In view of the Neu\-mann--Robin--Dirichlet comparison (Theorem~\ref{th:nleqd}), we need only prove Weyl's law for the Neumann and Dirichlet eigenvalues. We provided a proof in Proposition~\ref{pr:rectweyl}, for rectangles.
\end{proof}

\begin{proof}[Proof of Weyl aymptotic --- Step 2: finite union of rectangles]
Next we suppose $R_1,\ldots,R_n$ are disjoint rectangular domains and put
\begin{align*}
\widetilde{\Omega} & = \cup_{m=1}^n R_m \, , \\
\Omega & = \Interior \left( \cup_{m=1}^n \overline{R_m} \right) .
\end{align*}
\begin{figure}[h]
\begin{center}
\includegraphics[width=0.4\textwidth]{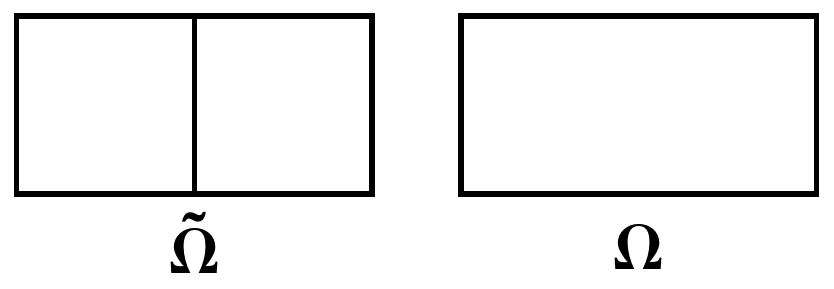}
\end{center}
\end{figure}
For example, if $R_1$ and $R_2$ are adjacent squares of side length $1$, then $\widetilde{\Omega}$ is the
disjoint union of those squares whereas $\Omega$ is the $2 \times 1$ rectangular domain formed from the interior of their union.

Admittedly $\widetilde{\Omega}$ is not connected, but the spectral theory of the Laplacian remains valid on a finite union of disjoint domains: the eigenfunctions are simply the eigenfunctions of each of the component domains extended to be zero on the other components, and the spectrum equals the union of the spectra of the individual components. (On an infinite union of disjoint domains, on the other hand, one would lose compactness of the imbedding $H^1 \hookrightarrow L^2$, and the zero eigenvalue of the Neumann Laplacian would have infinite multiplicity.)

Write $\widetilde{\lam}_j$ and $\widetilde{\mu}_j$ for the Dirichlet and Neumann eigenvalues of $\widetilde{\Omega}$.

Then by the restricted reverse Neumann monotonicity (Theorem~\ref{th:monotNeu}), Neumann--Robin--Dirichlet comparison (Theorem~\ref{th:nleqd}) and Dirichlet monotonicity (Theorem~\ref{th:monotDir}), we deduce that
\[
\widetilde{\mu}_j \leq \mu_j \leq \rho_j \leq \lam_j \leq \widetilde{\lam}_j \qquad \forall j \geq 1.
\]
Hence if we can prove Weyl's law
\begin{equation} \label{eq:weylunion}
\widetilde{\mu}_j \sim \widetilde{\lam}_j \sim \frac{4\pi j}{|\Omega|}
\end{equation}
for the union-of-rectangles domain $\widetilde{\Omega}$, then Weyl's law will follow for the original domain $\Omega$.

Define the eigenvalue counting functions of the rectangle $R_m$ to be
\begin{align*}
N_{\text{Neu}}(\alpha;R_m) & = \# \{ j \geq 1 : \mu_j(R_m) \leq \alpha \} , \\
N_{\text{Dir}}(\alpha;R_m) & = \# \{ j \geq 1 : \lam_j(R_m) \leq \alpha \} .
\end{align*}
We know from Weyl's law for rectangles (Step 1 of the proof above) that
\begin{equation} \label{eq:weylrect}
N_{\text{Neu}}(\alpha;R_m) \sim N_{\text{Dir}}(\alpha;R_m) \sim \frac{|R_m|}{4\pi} \alpha
\end{equation}
as $\alpha \to \infty$.

The spectrum of $\widetilde{\Omega}$ is the union of the spectra of the $R_m$, and so (here comes the key step in the proof!) the eigenvalue counting functions of $\widetilde{\Omega}$ equal the sums of the corresponding counting functions of the rectangles:
\begin{align*}
N_{\text{Neu}}(\alpha;\widetilde{\Omega}) = \sum_{m=1}^n N_{\text{Neu}}(\alpha;R_m) , \\
N_{\text{Dir}}(\alpha;\widetilde{\Omega}) = \sum_{m=1}^n N_{\text{Dir}}(\alpha;R_m) .
\end{align*}
Combining these sums with the asymptotic \eqref{eq:weylrect} shows that
\[
N_{\text{Neu}}(\alpha;\widetilde{\Omega}) \sim \left( \sum_{m=1}^n \frac{|R_m|}{4\pi} \right) \alpha = \frac{|\Omega|}{4\pi} \alpha
\]
and similarly
\[
N_{\text{Dir}}(\alpha;\widetilde{\Omega}) \sim \frac{|\Omega|}{4\pi} \alpha
\]
as $\alpha \to \infty$. We can invert these last two asymptotic formulas with the help of Lemma~\ref{le:inversion}, thus obtaining Weyl's law \eqref{eq:weylunion} for $\widetilde{\Omega}$.
\end{proof}

\begin{proof}[Proof of Weyl aymptotic --- Step 3: approximation of arbitrary domains]
Lastly we suppose $\Omega$ is an arbitrary domain with piecewise smooth boundary. The idea is to approximate $\Omega$
with a union-of-rectangles domain such as in Step 2, such that the volume of the approximating domain
is within $\e$ of the volume of $\Omega$. We refer to the text of Courant and Hilbert for the detailed proof \cite[{\S}VI.4.4]{CH}.
\end{proof}

\chapter[P\'{o}lya's conjecture]{P\'{o}lya's conjecture and the Berezin--Li--Yau Theorem}

\subsubsection*{Goal} To describe Polya's conjecture about Weyl's law, and to state the ``tiling domain'' and ``summed'' versions that are known to hold.

\paragraph*{References} \cite{K,L,P}

\subsubsection*{P\'{o}lya's conjecture}

Weyl's law (Theorem~\ref{th:weyl}) says that
\[
\lam_j \sim \frac{4\pi j}{|\Omega|} \sim \mu_j \qquad \text{as $j \to \infty$,}
\]
for a bounded plane domain $\Omega$ with piecewise smooth boundary. (We restrict to plane domains, in this chapter, for simplicity.)

P\'{o}lya conjectured that these asymptotic formulas hold as inequalities.
\begin{conjecture}[\cite{P}, 1960]
\[
\lam_j \geq \frac{4\pi j}{|\Omega|} \geq \mu_j \qquad \forall j \geq 1.
\]
\end{conjecture}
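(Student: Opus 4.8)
The plan is to reduce the conjecture to two clean inequalities for the Dirichlet and Neumann eigenvalue counting functions,
\[
N_{\mathrm{Dir}}(\alpha;\Omega) \leq \frac{|\Omega|}{4\pi}\,\alpha , \qquad N_{\mathrm{Neu}}(\alpha;\Omega) \geq \frac{|\Omega|}{4\pi}\,\alpha \qquad (\alpha>0),
\]
holding \emph{exactly}, with no remainder term. Granting these, P\'olya's inequalities follow by the elementary bookkeeping already implicit in Lemma~\ref{le:inversion}: since $\lam_1 \leq \cdots \leq \lam_j$ there are at least $j$ Dirichlet eigenvalues below $\lam_j$, so $j \leq N_{\mathrm{Dir}}(\lam_j;\Omega) \leq \frac{|\Omega|}{4\pi}\lam_j$, i.e.\ $\lam_j \geq 4\pi j/|\Omega|$; and for any $\alpha < \mu_j$ we have $\frac{|\Omega|}{4\pi}\alpha \leq N_{\mathrm{Neu}}(\alpha;\Omega) \leq j-1 < j$, so $\alpha < 4\pi j/|\Omega|$, and letting $\alpha \uparrow \mu_j$ gives $\mu_j \leq 4\pi j/|\Omega|$. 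Thus everything rests on the two displayed counting inequalities, which I would establish through a hierarchy of domains of increasing generality.

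\textbf{Rectangles.} These inequalities are already essentially contained in the proof of Proposition~\ref{pr:rectweyl}: assigning to each lattice point of the ellipse $E$ the unit square having that point as its upper-right corner produces squares lying inside $E$, whence $N_{\mathrm{Dir}}(\alpha) \leq \frac14\pi ab = \frac{|\Omega|}{4\pi}\alpha$; assigning instead the unit square with that point as its \emph{lower-left} corner (now allowing $j,k\geq 0$) produces squares that cover the first-quadrant part of $E$, whence $N_{\mathrm{Neu}}(\alpha) \geq \frac{|\Omega|}{4\pi}\alpha$. No error terms enter in these directions. \textbf{Finite disjoint unions of rectangles.} Here the counting function is the sum of the counting functions of the pieces --- this is exactly the key observation in Step~2 of the proof of Theorem~\ref{th:weyl} --- so both inequalities simply add up over the pieces, with the total area on the right. \textbf{Tiling domains.} Suppose that reflecting $\Omega$ across its faces tiles the plane by a crystallographic group $\Gamma$; then $\Omega$ also tiles a flat torus $\T = \R^2/\Lambda$, with $N = |\Gamma/\Lambda|$ copies and $|\T| = N|\Omega|$. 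Decomposing $L^2(\T)$ under $\Gamma$, the Laplacian on $\T$ splits into ``$\Gamma$-twisted'' Laplacians on $\Omega$, one per irreducible representation $\rho$, with $N_\T(\alpha) = \sum_\rho (\dim\rho)^2\, N_\rho(\alpha;\Omega)$ and $\sum_\rho (\dim\rho)^2 = N$; the Neumann and Dirichlet problems arise as the trivial and sign representations, and by the minimax principle the Neumann problem has the \emph{most} and the Dirichlet problem the \emph{fewest} eigenvalues below $\alpha$ among all the $\rho$. Hence $N_{\mathrm{Dir}}(\alpha;\Omega) \leq \frac1N N_\T(\alpha) \leq N_{\mathrm{Neu}}(\alpha;\Omega)$. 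Lattice counting on the dual lattice gives $N_\T(\alpha) = \frac{|\T|}{4\pi}\alpha + O(\sqrt{\alpha})$ with implied constant depending on the torus, so $N_{\mathrm{Dir}}(\alpha;\Omega) \leq \frac{|\Omega|}{4\pi}\alpha + \frac{C}{N}\sqrt{\alpha}$ and $N_{\mathrm{Neu}}(\alpha;\Omega) \geq \frac{|\Omega|}{4\pi}\alpha - \frac{C}{N}\sqrt{\alpha}$; passing to ever larger sublattices of $\Lambda$ (so that $N\to\infty$ while the left-hand sides stay fixed) kills the error and yields the two clean inequalities for $\Omega$.

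\textbf{The main obstacle is the last step: removing the tiling hypothesis.} The natural attack is to sandwich an arbitrary $\Omega$ between tiling domains of nearly the same area --- say, to enclose $\Omega$ from outside in a union of tiny grid squares whose area is within $\e$ of $|\Omega|$ --- and combine domain monotonicity (Theorem~\ref{th:monotDir}) with the tiling case. The difficulty is that a finite glued union of grid squares need not tile the plane, and P\'olya's conjecture for such pixelated domains is no easier than the conjecture itself, so this reduction stalls. The only general-purpose substitute, extracting individual eigenvalues from the heat-trace bound $\sum_j e^{-\lam_j t} \leq |\Omega|/(4\pi t)$ (equivalently, from the Berezin--Li--Yau inequality for eigenvalue sums), yields only $\lam_j \geq \tfrac{d}{d+2}\cdot 4\pi j/|\Omega|$ --- in the plane, merely \emph{half} of what is wanted --- and no sharpening of the pointwise comparison $p_\Omega(x,x,t)\leq (4\pi t)^{-d/2}$ is known that would recover the missing constant. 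So I expect the plan above to settle the conjecture completely for tiling domains (rectangles, and certain triangles, and their unions) but not in full generality; the passage from tiling to arbitrary domains --- which would seem to require either a monotone comparison family with explicitly computable spectrum beyond rectangles, or a genuinely new bound controlling all of the eigenvalues at once --- I regard as the decisive and, at present, unresolved difficulty.
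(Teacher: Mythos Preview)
This statement is P\'olya's conjecture, which the paper explicitly presents as \emph{open} (``The conjecture remains open even for a disk''). There is no proof in the paper to compare against; your final paragraph correctly identifies the passage from tiling domains to arbitrary domains as the unresolved step, and your remarks on Berezin--Li--Yau match what the paper reports.

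What can be compared is your treatment of the tiling case versus what the paper describes. The paper summarizes P\'olya's and Kellner's approach as ``a rescaling argument together with Weyl's law'': one packs roughly $\pi R^2/|\Omega|$ disjoint copies of $\Omega$ into a large disk $B_R$, uses Dirichlet monotonicity to get $N_{\mathrm{Dir}}(\alpha;B_R) \geq \tfrac{\pi R^2}{|\Omega|}(1+o(1))\,N_{\mathrm{Dir}}(\alpha;\Omega)$, evaluates the left side via Weyl's law and scaling for the disk, and lets $R\to\infty$. Your torus argument is a genuinely different route and the sublattice limit $N\to\infty$ is a nice way to kill the lattice-counting error. Two points, however. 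First, the claim that the Dirichlet problem has the \emph{fewest} eigenvalues below $\alpha$ among all the $\rho$-twisted problems is not justified by the minimax principle as you state it: the Neumann direction is fine (every $\rho$-equivariant $H^1(\T)$ function restricts to $H^1(\Omega)$), but the Dirichlet direction would need $H^1_0(\Omega)$ to embed in each $\rho$-isotypic form domain, which is not clear for higher-dimensional $\rho$. You can bypass the representation theory entirely: the inequality $N\cdot N_{\mathrm{Dir}}(\alpha;\Omega) \leq N_{\T}(\alpha) \leq N\cdot N_{\mathrm{Neu}}(\alpha;\Omega)$ is just Dirichlet--Neumann bracketing (Theorems~\ref{th:monotDir} and~\ref{th:monotNeu}) applied to the partition of $\T$ into $N$ congruent copies of $\Omega$. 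Second, this fix also removes your restriction to reflection groups --- any domain tiling the plane by a crystallographic group tiles some flat torus, and bracketing then applies --- so your method, once repaired, recovers the full known tiling-domain result by a route different from P\'olya's.
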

The conjecture remains open even for a disk.

P\'{o}lya proved the Dirichlet part of the inequality for tiling domains \cite{P}, and Kellner did the same for the Neumann part \cite{K}. Recall that a ``tiling domain'' covers the plane with congruent copies of itself (translations, rotations and reflections). For example, parallelograms and triangles are tiling domains, as are many variants of these domains (a fact that M. C. Escher exploited in his artistic creations).

P\'{o}lya and Kellner's proofs are remarkably simple, using a rescaling argument together with Weyl's law.

For arbitrary domains, P\'{o}lya's conjecture has been proved only for $\lam_1,\lam_2$ (see \cite[Th.~3.2.1 and (4.3)]{H}) and for $\mu_1,\mu_2,\mu_3$ (see \cite{GNP}). The conjecture remains open for $j \geq 3$ (Dirichlet) and $j \geq 4$ (Neumann).

\subsubsection*{Berezin--Li--Yau results}

The major progress for arbitrary domains has been on a ``summed'' version of the conjecture. (Quite often in analysis, summing or integrating an expression produces a significantly more tractable quantity.) Li and Yau \cite{LY} proved that
\[
\sum_{k=1}^j \lam_k \geq \frac{2\pi j^2}{|\Omega|} ,
\]
which is only slightly smaller than the quantity $(2\pi/|\Omega|)j(j+1)$ that one gets by summing the left side of the P\'{o}lya conjecture. An immediate consequence is a Weyl-type inequality for Dirichlet eigenvalues:
\[
\lam_j \geq \frac{2\pi j}{|\Omega|}
\]
by combining the very rough estimate $j \lam_j \geq \sum_{k=1}^j \lam_k$ with the Li--Yau inequality. The last formula has $2\pi$ whereas P\'{o}lya's conjecture demands $4\pi$, and so we see the conjecture is true up to a factor of $2$, at worst.

Similar results hold for Neumann eigenvalues.

A somewhat more general approach had been obtained earlier by Berezin. For more information, consult the work of Laptev \cite{L} and a list of open problems from recent conferences \cite{AIM}.

\chapter[Reaction--diffusion stability]{Case study: stability of steady states for reaction--diffusion PDEs}
\label{ch:reacdiff}

\subsubsection*{Goal} To linearize a nonlinear reaction--diffusion PDE around a steady state, and study the spectral theory of the linearized operator by time-map methods.

\paragraph*{References} \cite{Sch} Section 4.1

\subsubsection*{Reaction--diffusion PDEs}
Assume throughout this section that $f(y)$ is a smooth function on $\R$. Let $X>0$. We study the reaction--diffusion PDE
\begin{equation} \label{eq:reacdiff}
u_t = u_{xx} + f(u)
\end{equation}
on the interval $(0,X)$ with Dirichlet boundary conditions $u(0)=u(X)=0$. Physical interpretations include: (i) $u=$temperature and $f=$rate of heat generation, (ii) $u=$chemical concentration and $f=$reaction rate of chemical creation.

Intuitively, the $2$nd order diffusion term in the PDE is stabilizing (since $u_t=u_{xx}$ is the usual diffusion equation), whereas the $0$th order reaction term can be destabilizing (since solutions to $u_t=f(u)$ will grow, when $f$ is positive). Thus the reaction--diffusion PDE features a competition between stabilizing and destabilizing effects. This competition leads to nonconstant steady states, and interesting stability behavior.

\paragraph*{Steady states.} If $U(x)$ is a steady state, then
\begin{equation} \label{eq:reacdiffss}
U^{\prime \prime} + f(U) = 0 , \qquad 0<x<X .
\end{equation}
More than one steady state can exist. For example if $f(0)=0$ then $U \equiv 0$ is a steady state, but nonconstant steady states might exist too, such as $U(x)=\sin x$ when $X=\pi$ and $f(y)=y$.

\subsubsection*{Linearized PDE} We perturb a steady state by considering
\[
u=U+\e \phi
\]
where the perturbation $\phi(x,t)$ is assumed to satisfy the Dirichlet BC $\phi=0$ at $x=0,L$, for each $t$. Substituting $u$ into the equation \eqref{eq:reacdiff} gives
\begin{align*}
0+\e \phi_t
& = (U_{xx}+ \e \phi_{xx}) + f(U+\e \phi) \\
& = U_{xx} + \e \phi_{xx} + f(U) + f^\prime(U) \e \phi + O(\e^2) .
\end{align*}
The leading terms, of order $\e^0$, equal zero by the steady state equation for $U$. We discard terms of order $\e^2$ and higher. The remaining terms, of order $\e^1$, give the linearized equation:
\begin{equation} \label{eq:reaclin1}
\phi_t = \phi_{xx} + f^\prime(U) \phi .
\end{equation}
That is,
\[
\phi_t = - L\phi
\]
where $L$ is the symmetric linear operator
\[
Lw = -w_{xx} - f^\prime(U) w .
\]
Separation of variables gives (formally) solutions of the form
\[
\phi = \sum_j c_j e^{-\tau_j t} w_j(x) ,
\]
where the eigenvalues $\tau_j$ and Dirichlet eigenfunctions $w_j$ satisfy
\[
Lw_j = \tau_j w_j
\]
with $w_j(0)=w_j(X)=0$.

Thus the steady state $U$ of the reaction--diffusion PDE is
\[
\text{\textbf{linearly unstable} if $\tau_1<0$}
\]
because the perturbation $\phi$ grows to infinity, whereas the steady state is
\[
\text{\textbf{linearly stable} if $\tau_1 \geq 0$}
\]
because $\phi$ remains bounded in that case.

To make these claims rigorous, we study the spectrum of $L$.

\subsubsection*{Spectrum of $L$}

We take:

$\Omega=(0,X)$

$\mathcal{H}=L^2(0,X)$, inner product $\la u,v \ra_{L^2} = \int_0^X uv \, dx$

$\mathcal{K}=H^1_0(0,X)$, inner product
\[
\la u,v \ra_{H^1} = \int_0^X (u^\prime v^\prime + uv) \, dx
\]

Compact imbedding $H^1_0 \hookrightarrow L^2$ by Rellich--Kondrachov

Symmetric sesquilinear form
\[
a(u,v) = \int_0^X \big( u^\prime v^\prime - f^\prime(U)uv + Cuv \big) \, dx
\]
where $C>0$ is chosen larger than $\lv f^\prime \rv_{L^\infty}+1$. Proof of ellipticity:
\[
a(u,u) \geq \int_0^X \big( (u^\prime)^2 + u^2 \big) \, dx = \lv u \rv_{H^1}^2
\]
by choice of $C$.

\medskip
The discrete spectral Theorem~\ref{th:spec} now yields an ONB of eigenfunctions $\{ w_j \}$ with eigenvalues $\gamma_j$ such that
\[
a(w_j,v) = \gamma_j \la w_j,v \ra_{L^2} \qquad \forall v \in H^1_0(0,X) .
\]
Writing $\gamma_j=\tau_j+C$ we get
\[
\int_0^X \big( w_j^\prime v^\prime - f^\prime(U) w_j v \big) \, dx = \tau_j \int_0^X w_j v \, dx \qquad \forall v \in H^1_0(0,X) .
\]
These eigenfunctions satisfy $Lw_j = \tau_j w_j$ weakly, and hence also classically.

\subsubsection*{Stability of the zero steady state.} Assume $f(0)=0$, so that $U \equiv 0$ is a steady state.  Its stability is easily determined, as follows.

The linearized operator is $Lw=-w^{\prime \prime} - f^\prime(0)w$, which on the interval $(0,X)$ has Dirichlet eigenvalues
\[
\tau_j = \big( \frac{j\pi}{X} \big)^2 - f^\prime(0) .
\]
Thus the zero steady state is linearly unstable if and only if
\[
\big( \frac{\pi}{X} \big)^2 < f^\prime(0) .
\]
Thus we may call the reaction--diffusion PDE ``long-wave unstable'' when $f^\prime(0)>0$, because then the zero steady state is unstable with respect to perturbations of sufficiently long wavelength $X$. On short intervals, the Dirichlet BCs are strong enough to stabilize the steady state.

\subsubsection*{Sufficient conditions for linearized instability of nonconstant steady states}

Our first instability criterion is \emph{structural}, meaning it depends on properties of the reaction function $f$ rather than on properties of the particular steady state $U$.
\begin{theorem} \label{th:reactinst}
Assume the steady state $U$ is nonconstant, and that $f(0)=0, f^{\prime \prime}(0)=0$ and $f^{\prime \prime \prime}>0$. Then $\tau_1<0$.
\end{theorem}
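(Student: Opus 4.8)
The plan is to bound $\tau_1$ from above by the Rayleigh quotient of a well-chosen trial function, namely the steady state $U$ itself. Since $U$ is smooth and satisfies $U(0)=U(X)=0$, it belongs to $H^1_0(0,X)$, and it is not identically zero because it is nonconstant. Recalling that $\tau_j = \gamma_j - C$ and applying the Rayleigh principle \eqref{eq:vc1} to the form $a$ of this chapter, one gets
\[
\tau_1 \;\le\; \frac{\int_0^X \big( (U')^2 - f'(U)\,U^2 \big)\,dx}{\int_0^X U^2\,dx},
\]
so it suffices to prove that the numerator is strictly negative.

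To evaluate the numerator I would integrate $\int_0^X (U')^2\,dx$ by parts, using the steady-state equation $U'' = -f(U)$ and the boundary conditions $U(0)=U(X)=0$; this gives $\int_0^X (U')^2\,dx = \int_0^X f(U)\,U\,dx$. Hence the numerator equals $\int_0^X U\,g(U)\,dx$, where $g(y) := f(y) - y\,f'(y)$.

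The heart of the matter is the sign of $g$, and this is where the structural hypotheses on $f$ enter. We have $g(0) = f(0) = 0$ and, differentiating, $g'(y) = -y\,f''(y)$. Since $f''(0)=0$ and $f'''>0$, the function $f''$ is strictly increasing and vanishes only at the origin, so $f''(y)$ has the same sign as $y$; therefore $y\,f''(y) > 0$ for $y \neq 0$, i.e.\ $g' < 0$ off the origin. Thus $g$ is strictly decreasing on $\R$, and together with $g(0)=0$ this yields $y\,g(y) < 0$ whenever $y \neq 0$. Consequently $U(x)\,g(U(x)) \leq 0$ for every $x$, with strict inequality on the nonempty open set $\{x : U(x) \neq 0\}$, which has positive measure. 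Hence $\int_0^X U\,g(U)\,dx < 0$, the Rayleigh quotient of $U$ is negative, and $\tau_1 < 0$.

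No step here presents a genuine obstacle; the only place demanding care is the sign analysis of $g$, which reduces cleanly to the hypotheses $f(0)=f''(0)=0$ and $f'''>0$. It is worth noting that $U'$ itself solves the linearized equation $Lw = 0$ but fails the Dirichlet boundary condition (the values $U'(0)$ and $U'(X)$ cannot both vanish, else $U\equiv 0$), which already makes $\tau_1 \le 0$ plausible; the strict inequality is precisely what the convexity-type hypothesis $f''' > 0$ buys us, via the sign of $y\,g(y)$.
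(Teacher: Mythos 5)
Your argument is correct, and it takes a genuinely different route from the paper's. The paper uses the trial function $w=U''$, integrates by parts twice (using the boundary facts $U=U''=f''(U)=0$ at the endpoints, which is why the hypotheses $f(0)=0$ and $f''(0)=0$ appear there), and lands on the numerator $-\tfrac13\int_0^X f'''(U)(U')^4\,dx<0$. You instead take $w=U$ itself, reduce the numerator to $\int_0^X U\,g(U)\,dx$ with $g(y)=f(y)-yf'(y)$, and conclude by the pointwise sign analysis $g'(y)=-yf''(y)<0$ for $y\neq0$, hence $yg(y)<0$ off the origin. Both computations are valid and use all three structural hypotheses, but they use them in different places: in your version $f(0)=0$ gives $g(0)=0$ and $f''(0)=0$, $f'''>0$ give the strict monotonicity of $g$, whereas in the paper's version $f(0)=0$ and $f''(0)=0$ only kill boundary terms and $f'''>0$ supplies the final sign. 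Your approach has the minor advantages of needing only two integrations by parts' worth of regularity of $U$ (no $U''''$) and of isolating the structural hypothesis into the transparent pointwise condition $y\bigl(f(y)-yf'(y)\bigr)<0$ for $y\neq0$, which is in fact slightly weaker than $f''(0)=0$, $f'''>0$; the paper's choice $w=U''$ has the interpretive advantage of being the perturbation that pushes $U$ toward the constant state (yours, $w=U$, is the amplitude-rescaling perturbation $(1+\e)U$). One small point worth stating explicitly: since $U$ is continuous, nonconstant, and vanishes at the endpoints, it is not identically zero, so $\{U\neq0\}$ is a nonempty open set and the strict inequality in the integral does hold, as you note.
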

For example, the theorem shows that nonconstant steady states are unstable when $f(y)=y^3$.
\begin{proof}
First we collect facts about boundary values, to be used later in the proof when we integrate by parts:
\begin{align*}
U & = 0 \text{\ at $x=0,X$} && \text{by the Dirichlet BC,} \\
f(U) & = 0 \text{\ at $x=0,X$} && \text{since $f(0)=0$,} \\
U^{\prime \prime} & = 0 \text{\ at $x=0,X$} && \text{because $U^{\prime \prime} = -f(U)$} \\
f^{\prime \prime}(U) & = 0 \text{\ at $x=0,X$} && \text{since $f^{\prime \prime}(0)=0$.}
\end{align*}

The Rayleigh principle for $L$ says that
\[
\tau_1 = \min \Big\{ \frac{\int_0^X \big( (w^\prime)^2 - f^\prime(U) w^2 \big) \, dx}{\int_0^X w^2 \, dx} :
w \in H^1_0(0,X) \Big\} .
\]
We choose a trial function
\[
w = U^{\prime \prime},
\]
which is not the zero function, since $U$ is nonconstant. Then the numerator of the Rayleigh quotient for $w$ is
\begin{align*}
& \int_0^X \big( (U^{\prime \prime \prime})^2 - f^\prime(U) (U^{\prime \prime})^2 \big) \, dx \\
& = \int_0^X \big( -U^{\prime \prime \prime \prime} - f^\prime(U) U^{\prime \prime} \big) U^{\prime \prime} \, dx && \text{by parts} \\
& = \int_0^X f^{\prime \prime}(U) (U^\prime)^2 U^{\prime \prime} \, dx && \text{by the steady state equation \eqref{eq:reacdiffss}} \\
& = \frac{1}{3} \int_0^X f^{\prime \prime}(U) \big[ (U^\prime)^3 \big]^\prime \, dx \\
& = - \frac{1}{3} \int_0^X f^{\prime \prime \prime}(U) (U^\prime)^4 \, dx && \text{by parts} \\
& < 0
\end{align*}
since $f^{\prime \prime \prime}>0$ and $U$ is nonconstant. Hence $\tau_1 < 0$, by the Rayleigh principle.
\end{proof}

\noindent \emph{Motivation for the choice of trial function.} Our trial function $w=U^{\prime \prime}$ corresponds to a perturbation $u=U+\e \phi=U+\e U^{\prime \prime}$, which tends (when $\e>0$) to push the steady state towards the constant function. The opposite perturbation ($\e<0$) would tend to make the solution grow even further away from the constant steady state.

\medskip
The next instability criterion, rather than being structural, depends on particular properties of the steady state.
\begin{theorem}[\protect{\cite[Proposition 4.1.2]{Sch}}] \label{th:reactsign}
Assume the nonconstant steady state $U$ changes sign on $(0,X)$. Then $\tau_1<0$.
\end{theorem}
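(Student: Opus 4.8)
The plan is to use $U'$ as the raw material for a trial function in the Rayleigh principle for $\tau_1$. Differentiating the steady-state equation $U''+f(U)=0$ gives $U'''+f'(U)U'=0$, that is, $L(U')=0$ on all of $[0,X]$; the only obstruction is that $U'$ need not vanish at the endpoints, so it is not itself a Dirichlet trial function. I would fix this by cutting $U'$ down to a subinterval at whose ends it does vanish.

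Concretely: since $U$ changes sign on $(0,X)$ and $U(0)=U(X)=0$, the intermediate value theorem produces an interior zero $c\in(0,X)$ of $U$. Applying Rolle's theorem to $U$ on $[0,c]$ and on $[c,X]$ (using $U(0)=U(c)=U(X)=0$) yields points $a\in(0,c)$ and $b\in(c,X)$ with $U'(a)=U'(b)=0$, so that $0<a<b<X$. One checks $U'$ is not identically zero on $(a,b)$: otherwise $U$ is constant there, and, solving the ODE with vanishing derivative at $x=a$, uniqueness forces $U$ constant on all of $[0,X]$, contradicting $U(0)=0$ and the nonconstancy of $U$. Hence the function $\phi$ equal to $U'$ on $[a,b]$ and extended by $0$ to $[0,X]$ is a nonzero element of $H^1_0(0,X)$. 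Substituting $\phi$ into the Rayleigh principle for $\tau_1$ and integrating by parts on $[a,b]$ — the boundary terms vanish because $\phi(a)=\phi(b)=0$ — gives $\int_0^X((\phi')^2-f'(U)\phi^2)\,dx=\int_a^b(L\phi)\,\phi\,dx=0$, and therefore $\tau_1\le 0$.

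It remains to rule out $\tau_1=0$. If $\tau_1=0$, then $\phi$ attains the minimum in the Rayleigh principle and is thus a first Dirichlet eigenfunction of $L$ on $(0,X)$ with eigenvalue $0$; but $\phi$ vanishes identically on the nonempty open subinterval $(0,a)$, which is impossible, since the ground state of a one-dimensional Schr\"odinger operator on an interval is, up to sign, strictly positive on the open interval and in particular has no interior zero. This contradiction forces $\tau_1<0$.

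The main obstacle, beyond routine verification, is extracting the strict inequality: producing a trial function with Rayleigh quotient equal to $0$ only yields $\tau_1\le 0$, and strictness needs both the sign-definiteness of the ground state of $L$ and the observation that the cutoff interval $(a,b)$ lies properly inside $(0,X)$ (with $a>0$ and $b<X$), so that $\phi$ actually vanishes on a genuine open set. A secondary point requiring care is the argument that $U'\not\equiv0$ on $(a,b)$, which is where the hypothesis that $U$ is nonconstant is used together with uniqueness for the steady-state ODE.
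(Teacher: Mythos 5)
Your proof is correct, and its core is the same as the paper's: differentiate the steady-state equation to see $L(U')=0$, cut $U'$ down to a subinterval at whose endpoints it vanishes, use the resulting function as a Dirichlet trial function with Rayleigh quotient zero to get $\tau_1\le 0$, and then rule out equality. The two arguments diverge only in how the subinterval is chosen and, more substantively, in the equality case. The paper picks $x_1<x_2$ so that $U'\neq 0$ on the \emph{open} interval $(x_1,x_2)$, and then kills the case $\tau_1=0$ by a local ODE argument: a minimizer of the Rayleigh quotient is an eigenfunction, hence smooth, so the one-sided derivatives of the glued trial function must match at $x_2$, giving $w(x_2)=w'(x_2)=0$ and hence $w\equiv 0$ on $(x_1,x_2)$ by uniqueness for the second-order linear ODE --- contradicting the choice of the interval. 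You instead take Rolle points $a,b$ straddling an interior zero of $U$, only need $U'\not\equiv 0$ on $(a,b)$, and dispose of $\tau_1=0$ by appealing to the global fact that a ground state of a one-dimensional Schr\"odinger operator has no interior zeros, which is violated because your $\phi$ vanishes identically on $(0,a)$ with $a>0$. Both finishes are legitimate; the paper's is more self-contained (it uses only Cauchy uniqueness for the linear ODE), whereas yours imports the positivity/simplicity of the ground state (the fact the paper elsewhere cites as \cite[Theorem 8.38]{GT}). Your observation that $U'\not\equiv 0$ on $(a,b)$ --- via uniqueness for the nonlinear steady-state equation with data $U(a)=k$, $U'(a)=0$ --- is also correctly handled and is genuinely needed with your choice of interval, since Rolle alone does not prevent $U'$ from vanishing inside $(a,b)$.
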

For example, suppose $f(y)=y$ so that the steady state equation is $U^{\prime \prime}+U=0$. If $X=2\pi$ then the steady state $U=\sin x$ is linearly unstable, by the theorem. Of course, for that example we can compute the spectrum of $L$ exactly: the lowest eigenfunction is $w=\sin(x/2)$ with eigenvalue $\tau_1 = \big( \tfrac{1}{2} \big)^2-1<0$.
\begin{proof}
If $U$ changes sign then it has a positive local maximum and a negative local minimum in $(0,X)$, recalling that $U=0$ at the endpoints. Obviously $U^\prime$ must be nonzero at some point between these local extrema, and so there exist points $0<x_1<x_2<X$ such that
\[
U^\prime(x_1)=U^\prime(x_2)=0
\]
and $U^\prime \neq 0$ on $(x_1,x_2)$. Define a trial function
\[
w =
\begin{cases}
U^\prime & \text{on $(x_1,x_2)$,} \\
0 & \text{elsewhere.}
\end{cases}
\]
(We motivate this choice of trial function at the end of the proof.) Then $w$ is piecewise smooth, and is continuous since $w=U^\prime=0$ at $x_1$ and $x_2$. Therefore $w \in H^1_0(0,X)$, and $w \not \equiv 0$ since $U^\prime \neq 0$ on $(x_1,x_2)$.

The numerator of the Rayleigh quotient for $w$ is
\begin{align*}
\int_0^X \big( (w^\prime)^2 - f^\prime(U) w^2 \big) \,  dx
& = \int_{x_1}^{x_2} \big( -w^{\prime \prime} - f^\prime(U)w \big) w \, dx \qquad \text{by parts} \\
& = 0
\end{align*}
since
\begin{equation} \label{eq:reactrial}
-w^{\prime \prime} = - U^{\prime \prime \prime}  =\big( f(U) \big)^\prime = f^\prime(U) U^\prime = f^\prime(U)w .
\end{equation}
Hence $\tau_1 \leq 0$, by the Rayleigh principle for the first eigenvalue.

Suppose $\tau_1=0$, so that the Rayleigh quotient of $w$ equals $\tau_1$. Then $w$ must be an eigenfunction with eigenvalue $\tau_1$ (because substituting $w=\sum_j c_j w_j$ into the Rayleigh quotient would give a value larger than $\tau_1$, if $c_j$ were nonzero for any term with eigenvalue larger than $\tau_1$).

Since the eigenfunction $w$ must be smooth, the slopes of $w$ from the left and the right at $x_2$ must agree, which means $w^\prime(x_2)=w^\prime(x_2 +)=0$. Thus $w(x_2)=w^\prime(x_2)=0$ and $w$ satisfies the second order linear ODE \eqref{eq:reactrial} on $(x_1,x_2)$. Therefore $w \equiv 0$ on $(x_1,x_2)$, by uniqueness, which contradicts our construction of $w$. We conclude $\tau_1 < 0$.
\end{proof}

\noindent \emph{Motivation for the choice of trial function.} The steady state equation reads $U^{\prime \prime} + f(U)=0$, and differentiating shows that $U^\prime$ lies in the nullspace of the linearized operator $L$:
\[
L U^\prime = -(U^\prime)^{\prime \prime} - f^\prime(U) U^\prime = 0 .
\]
In other words, $U^\prime$ is an eigenfunction with eigenvalue $0$, which almost proves instability (since instability would correspond to a negative eigenvalue). Of course, the eigenfunction $U^\prime$ does not satisfy the Dirichlet boundary conditions at the endpoints, and hence we must restrict to the subinterval $(x_1,x_2)$, in the proof above, in order to obtain a valid trial function.

\subsubsection*{Time maps and linearized stability}

Next we derive instability criteria  that are almost necessary and sufficient. These conditions depend on the \emph{time map} for a family of steady states.

Parameterize the steady states by their slope at the left endpoint: given $s \neq 0$, write $U_s(x)$ for the steady state on $\R$ (if it exists) satisfying
\[
U_s(0)=0, \qquad U_s^\prime(0)=s,  \qquad \text{$U_s(x)=0$ for some $x>0$.}
\]
Define the \emph{time map} to give the first point or ``time'' $x$ at which the steady state hits the axis:
\[
T(s) = \min \{ x>0 : U_s(x)=0 \} .
\]

If $U_s$ exists for some $s \neq 0$ then it exists for all nonzero $s$-values in a neighborhood, and the time map is smooth on that neighborhood \cite[Proposition 4.1.1]{Sch}. The time map can be determined numerically by plotting solutions with different initial slopes, as the figures below show. In the first figure the time map is decreasing, whereas in the second it increases.

\begin{figure}[h]
\begin{center}
\includegraphics[width=0.4\textwidth]{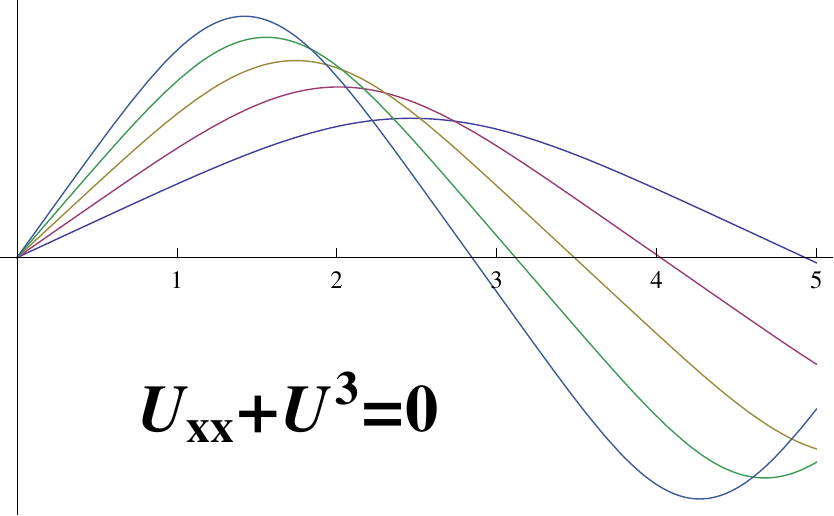} \qquad
\includegraphics[width=0.4\textwidth]{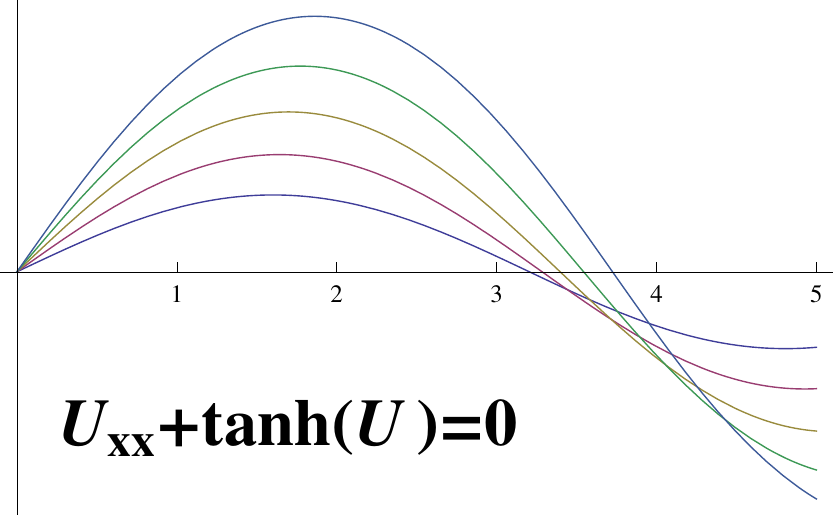}
\end{center}
\end{figure}

Monotonicity of the time maps determines stability of the steady state:
\begin{theorem}[\protect{\cite[Proposition 4.1.3]{Sch}}] \label{th:reacttime}
The steady state $U_s$ is linearly unstable on the interval $(0,T(s))$ if $sT^\prime(s)<0$, and is linearly stable if $sT^\prime(s)>0$.
\end{theorem}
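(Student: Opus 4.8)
\emph{Proof proposal.} The plan is to exploit the dependence of the steady state on the shooting parameter $s$. Set $\psi = \partial U_s/\partial s$. Differentiating the steady-state equation $U_s'' + f(U_s) = 0$ and the conditions $U_s(0)=0$, $U_s'(0)=s$ with respect to $s$ shows that $\psi$ solves the variational equation $L\psi = -\psi'' - f'(U_s)\psi = 0$ with $\psi(0)=0$ and $\psi'(0)=1$. The first key step is the identity
\[
\psi(T(s)) = s\,T'(s) ,
\]
which I would obtain by differentiating the relation $U_s(T(s))\equiv 0$ in $s$ (getting $\psi(T(s)) = -U_s'(T(s))\,T'(s)$) and observing that energy conservation for $U_s'' = -f(U_s)$ forces $U_s'(T(s)) = -s$.

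The second step is to count the zeros of $\psi$ in $(0,T(s))$. Here I would bring in $\phi = U_s'$, which also satisfies $L\phi = 0$ and, because $T(s)$ is the \emph{first} return of $U_s$ to zero, has exactly one zero in $(0,T(s))$: a second zero would, by energy conservation, trap $U_s$ strictly between two nonzero levels so that it could never come back to $0$. The Wronskian $\phi(0)\psi'(0) - \phi'(0)\psi(0) = s$ is nonzero, so $\psi$ and $\phi$ are independent solutions and their zeros interlace (Sturm separation); combined with $\psi(0)=0$ this forces $\psi$ to have at most one zero in $(0,T(s)]$. Since $\psi'(0)=1>0$, the sign of $\psi(T(s)) = sT'(s)$ then settles the picture: if $sT'(s)>0$ then $\psi>0$ throughout $(0,T(s)]$, whereas if $sT'(s)<0$ then $\psi$ has exactly one interior zero $z_1\in(0,T(s))$, at which $\psi'(z_1)<0$.

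The unstable case $sT'(s)<0$ I would handle exactly as in the proof of Theorem~\ref{th:reactsign}: take the trial function $w = \psi$ on $[0,z_1]$ and $w = 0$ on $[z_1,T(s)]$, which is a nonzero element of $H^1_0(0,T(s))$. Integration by parts, using $L\psi=0$ and $\psi(0)=\psi(z_1)=0$, gives $\int_0^{T(s)}\big((w')^2 - f'(U_s)w^2\big)\,dx = 0$, so $\tau_1\le 0$ by the Rayleigh principle~\eqref{eq:vc1}; and $\tau_1 = 0$ is impossible, because then $w$ would be a minimizer, hence a smooth eigenfunction, contradicting the corner of $w$ at $z_1$. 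For the stable case $sT'(s)>0$, where $\psi>0$ on $(0,T(s)]$, I would run a Picone-type computation: writing a trial function as $w = \psi v$ and integrating by parts yields
\[
\int_0^{T(s)}\big((w')^2 - f'(U_s)w^2\big)\,dx = \int_0^{T(s)} \psi^2 (v')^2\,dx ,
\]
the boundary contributions vanishing because $w(T(s))=0$ and $w(x)^2/\psi(x)\to 0$ as $x\to 0^+$ for $w\in H^1_0$. The right-hand side is positive for every nonzero $w$, hence $\tau_1 > 0$ by~\eqref{eq:vc1}, and the steady state is linearly stable.

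I expect the main obstacle to be the zero-counting of the second step: verifying that $U_s$ is single-humped on $(0,T(s))$ and then translating, via Sturm separation and the Wronskian, the sign of $sT'(s)$ into precise information about the zeros of $\psi$. Once that is secured, the instability half reduces to the trial-function trick already used in this chapter, and the stability half to the elementary Picone identity together with the Rayleigh principle.
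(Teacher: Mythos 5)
Your proposal is correct, and it shares the paper's skeleton in the first half: the same variation $\psi=\partial U_s/\partial s$, the same identity $\psi(T(s))=sT'(s)$ (the paper derives $U_s'(T(s))=-s$ from the symmetry of $U_s$ about the midpoint rather than from the energy relation, but the two are equivalent), and the same cut-off trial function argument for the unstable case, lifted from Theorem~\ref{th:reactsign}. Where you genuinely diverge is in the stable case. To show $\psi>0$ on $(0,T(s)]$, the paper differentiates the energy identity \eqref{eq:reacenergy} in $s$ to get $U'\psi'+f(U)\psi=s_0$ and then analyzes where zeros of $\psi$ can sit relative to the single hump of $U$; you instead invoke Sturm separation against the second null solution $U_s'$, using the nonvanishing Wronskian and the single-hump property. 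Both work; yours is arguably cleaner once the single-hump fact is in hand, while the paper's stays closer to the machinery already set up in the chapter. For the stability conclusion itself, the paper observes that the positive function $\psi$ is the first eigenfunction of a mixed Dirichlet--Robin problem with eigenvalue $0$ and then adapts the monotonicity of Theorem~\ref{th:nleqd} to conclude $\tau_1\geq 0$; you instead run the Picone (ground-state substitution) identity $w=\psi v$, which buys the strictly stronger conclusion $\tau_1>0$ but requires you to be careful at the left endpoint, where $\psi$ vanishes linearly: the boundary term $\psi'\psi v^2=\psi'\,w^2/\psi$ does tend to $0$ for $w\in H^1_0$ as you claim, but a clean write-up should first establish the identity for $w\in C^\infty_0(0,T(s))$ and pass to general $w$ by density, since $v=w/\psi$ need not itself lie in $H^1$ near $x=0$. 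These are routine repairs, not gaps; the argument is sound.
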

\begin{proof}
We begin by differentiating the family of steady states with respect to the parameter $s$, and obtaining some properties of that function. Then we treat the ``instability'' and ``stability'' parts of the theorem separately.

Write $s_0 \neq 0$ for a specific value of $s$, in order to reduce notational confusion. Let $X=T(s_0)$.
Define a function
\[
v = \frac{\partial U_s}{\partial s} \Big|_{s=s_0}
\]
on $(0,X)$, where we use that $U_s(x)$ is jointly smooth in $(x,s)$. Then
\begin{equation} \label{eq:reactime1}
v^{\prime \prime} + f^\prime(U) v = 0
\end{equation}
as one sees by differentiating the steady state equation \eqref{eq:reacdiffss} with respect to $s$, and writing $U$ for $U_{s_0}$.

At the left endpoint we have
\[
v(0)=0, \qquad v^\prime(0)=1 ,
\]
because $U_s(0)=0, U_s^\prime(0)=s$ for all $s$.

We do not expect $v$ to vanish at the right endpoint, but we can calculate its value there to be
\[
v(X) = s_0 T^\prime(s_0) ,
\]
as follows. First, differentiating the equation $0=U_s(T(s))$ gives that
\begin{align*}
0
& = \frac{\partial \ }{\partial s} U_s(T(s)) \\
& = \frac{\partial U_s}{\partial s} \big( T(s) \big) + U_s^\prime(T(s))T^\prime(s) \\
& = v \big( T(s) \big) + U_s^\prime(T(s))T^\prime(s) .
\end{align*}
Note the steady state $U_s$ is symmetric about the midpoint of the interval $(0,T(s))$ (exercise; use that $U_s=0$ at both endpoints and that the steady state equation is invariant under $x \mapsto -x$, so that steady states must be symmetric about any local maximum point). Thus $U_s^\prime(T(s))=-U_s^\prime(0)=-s$, and evaluating the last displayed formula at $s=s_0$ then gives that $0=v(X)-s_0 T^\prime(s_0)$, as we wanted.

\medskip
\emph{Proof of instability.}
Assume $s_0 T^\prime(s_0)<0$. Then $v(X)<0$. Since $v^\prime(0)=1$ we know $v(x)$ is positive for small values of $x$, and so some $x_2 \in (0,X)$ exists at which $v(x_2)=0$. Define a trial function
\[
w =
\begin{cases}
v & \text{on $[0,x_2)$,} \\
0 & \text{elsewhere.}
\end{cases}
\]
Then $w$ is piecewise smooth, and is continuous since $v=0$ at $x_2$. Note $w(0)=0$. Therefore $w \in H^1_0(0,X)$, and $w \not \equiv 0$.

Hence $\tau_1 < 0$ by arguing as in the proof of Theorem~\ref{th:reactsign}, except with $x_1=0$.

\medskip \noindent [\emph{Motivation for the choice of trial function.} Differentiating the steady state equation $U^{\prime \prime} + f(U)=0$ with respect to $s$ shows that $\partial U/\partial s$ is an eigenfunction with eigenvalue zero:
\[
L \big( \frac{\partial U}{\partial s} \big) = -\big( \frac{\partial U}{\partial s} \big)^{\prime \prime} - f^\prime(U) \frac{\partial U}{\partial s} = 0 .
\]
In other words, $\partial U/\partial s$ lies in the nullspace of the linearized operator. It does not satisfy the Dirichlet boundary condition at the right endpoint, but we handled that issue in the proof above by restricting to the subinterval $(0,x_2)$, in order to obtain a valid trial function.]

\medskip
\emph{Proof of stability.} Assume $s_0 T^\prime(s_0)>0$, so that $v(X)>0$. Define $\sigma=-v^\prime(X)/v(X)$. Then
\[
v(0)=0, \qquad v^\prime(X)+ \sigma v(X)=0 ,
\]
which is a mixed Dirichlet--Robin boundary condition. We will show later that $v$ is a first eigenfunction for $L$, under this mixed condition, with eigenvalue is $\rho_1=0$ (since $Lv=0$ by \eqref{eq:reactime1}).

By adapting our Dirichlet-to-Robin monotonicity result (Theorem~\ref{th:nleqd}) one deduces that
\[
\tau_1 \geq \rho_1 = 0 ,
\]
which gives linearized stability of the steady state $U$.

To show $v$ is a first eigenfunction for $L$, as used above, we first show $v$ is positive on $(0,X)$. Apply the steady state equation \eqref{eq:reacdiffss} to $U_s$, and multiply by $U_s^\prime$ and integrate to obtain the energy equation
\begin{equation} \label{eq:reacenergy}
\frac{1}{2} (U_s^\prime)^2 + F(U_s) = \frac{1}{2} s^2 ,
\end{equation}
where $F$ is an antiderivative of $f$ chosen with $F(0)=0$. Differentiating with respect to $s$ at $s=s_0$ gives that
\[
U^\prime v^\prime + f(U)v = s_0.
\]
Hence if $v$ vanishes at some $x_0 \in (0,X)$ then $U^\prime v^\prime = s_0 \neq 0$ and so $v^\prime(x_0) \neq 0$. Thus at any two successive zeros of $v$, we know $v^\prime$ has opposite signs. Therefore $U^\prime$ has opposite signs too, because $U^\prime v^\prime = s_0$ at the zeros. It is straightforward to show from \eqref{eq:reacenergy} that $U$ increases on $[0,X/2]$ and decreases on $[X/2,X]$, and so after the zero of $v$ at $x=0$ the next zero (if it exists) can only be $>X/2$, and the one after that must be $>X$. Since we know $v(x)$ is positive for small $x$ and that $v(X)>0$, we conclude $v$ has no zeros in $(0,X)$ and hence is positive there.

The first eigenfunction of $L$ with mixed Dirichlet--Robin boundary condition is positive, and it is the unique positive eigenfunction (adapt the argument in \cite[Theorem 8.38]{GT}). Since the eigenfunction $v$ is positive, we conclude that it is the first Dirichlet--Robin eigenfunction, as desired.
\end{proof}

\chapter[Thin fluid film stability]{Case study: stability of steady states for thin fluid film PDEs}

\subsubsection*{Goal} To linearize a particular nonlinear PDE around a steady state, and develop the spectral theory of the linearized operator.

\paragraph*{References} \cite{LP1,LP2}

\subsubsection*{Thin fluid film PDE}

The evolution of a thin layer of fluid (such as paint) on a flat substrate (such as the ceiling) can be modeled using the \emph{thin fluid film PDE}:
\[
h_t = - \big( f(h) h_{xxx} \big)_x - \big( g(h) h_x \big)_x
\]
where $h(x,t)>0$ measures the thickness of the fluid, and the smooth, positive coefficient functions $f$ and $g$ represent surface tension and gravitational effects (or substrate-fluid interactions). For simplicity we assume $f \equiv 1$, so that the equation becomes
\begin{equation} \label{eq:thinfilm}
h_t = - h_{xxxx} - \big( g(h) h_x \big)_x .
\end{equation}
We will treat the case of general $g$, but readers are welcome to focus on the special case $g(y)=y^p$ for some $p \in \R$.

Solutions are known to exist for small time, given positive smooth initial data. But films can ``rupture'' in finite time, meaning $h(x,t) \searrow 0$ as $t \nearrow T$, for some coefficient functions $g$ (for example, for $g \equiv 1$).

Intuitively, the $4$th order ``surface tension'' term in the PDE  is stabilizing (since $h_t=-h_{xxxx}$ is the usual $4$th order diffusion equation) whereas the $2$nd order ``gravity'' term is destabilizing (since $h_t=-h_{xx}$ is the backwards heat equation). Thus the thin film PDE features a competition between stabilizing and destabilizing effects. This competition leads to nonconstant steady states, and interesting stability behavior.

\paragraph*{Periodic BCs and conservation of fluid.} Fix $X>0$ and assume $h$ is $X$-periodic with respect to $x$. Then the total volume of fluid is conserved, since
\begin{align*}
\frac{d\ }{dt} \int_0^X h(x,t) \, dx
& = - \int_0^X \big( h_{xxxx} +\big( g(h) h_x \big)_x \big) \, dx \\
& = - \big( h_{xxx} + g(h) h_x \big) \Big|_{x=0}^{x=X} \\
& = 0
\end{align*}
by periodicity.

\paragraph*{Nonconstant steady states.} Every constant function is a steady state of \eqref{eq:thinfilm}. We discuss the stability of these steady states at the end of the chapter.

To find \emph{nonconstant} steady states, substitute $h=H(x)$ and solve:
\begin{align}
-H_{xxxx} - ( g(H) H_x )_x & = 0 \label{eq:filmss} \\
H_{xxx} + g(H) H_x & = \alpha \notag \\
H_{xx} + G(H) & = \beta + \alpha x \notag \\
H_{xx} + G(H) & = \beta \notag
\end{align}
where $G$ is an antiderivative of $g$; here $\alpha=0$ is forced because the left side of the equation ($H_{xx} + G(H)$) is periodic. This last equation describes a nonlinear oscillator, and it is well known how to construct solutions (one multiplies by $H_x$ and integrates). For example, when $g \equiv (2\pi/X)^2$ we have steady states $H(x)=(\text{const.})+\cos(2\pi x/X)$. For the general case see \cite{LP1}.

Assume from now on that $H(x)$ is a nonconstant steady state with \text{period $X$}.

\subsubsection*{Linearized PDE} We perturb a steady state by considering
\[
h=H+\e \phi
\]
where the perturbation $\phi(x,t)$ is assumed to have mean value zero $\int_0^X \phi(x,t) \, dx = 0$), so that fluid is conserved. Substituting $h$ into the equation \eqref{eq:thinfilm} gives
\begin{align*}
0+\e \phi_t
& = - (H_{xxxx}+\e \phi_{xxxx}) - \big( g(H+\e \phi) (H_x + \e \phi_x) \big)_x \\
& = - H_{xxxx} - \big( g(H) H_x \big)_x - \e \big[ \phi_{xxx} + g(H) \phi_x + g^\prime(H) H_x \phi \big]_x + O(\e^2) .
\end{align*}
The leading terms, of order $\e^0$, equal zero by the steady state equation for $H$. We discard terms of order $\e^2$ and higher. The remaining terms, of order $\e^1$, give the linearized equation:
\begin{equation} \label{eq:thinlin1}
\phi_t = - \big[ \phi_{xx} + g(H) \phi \big]_{xx} .
\end{equation}
Unfortunately, the operator on the right side is not symmetric (meaning it does not equal its formal adjoint). To make it symmetric, we ``integrate up'' the equation, as follows. Write
\[
\phi=\psi_x
\]
where $\psi$ is $X$-periodic (since $\phi$ has mean value zero). We may suppose $\psi$ has mean value zero at each time, by adding to $\psi$ a suitable function of $t$.

Substituting $\phi=\psi_x$ into \eqref{eq:thinlin1} gives that
\begin{align*}
\psi_{tx} & = - \big[ \psi_{xxx} + g(H) \psi_x \big]_{xx} \\
\psi_t & = - \big[ \psi_{xxx} + g(H) \psi_x \big]_x
\end{align*}
(noting the constant of integration must equal $0$, by integrating both sides and using periodicity). Thus
\[
\psi_t = -L\psi
\]
where $L$ is the symmetric operator
\[
Lw = w_{xxxx}+ \big( g(H) w_x \big)_x .
\]
Separation of variables gives (formally) solutions of the form
\[
\psi = \sum_j c_j e^{-\tau_j t} w_j(x) , \qquad \phi = \sum_j c_j e^{-\tau_j t} w_j^\prime(x) ,
\]
where the eigenvalues $\tau_j$ and periodic eigenfunctions $w_j$ satisfy
\[
Lw_j = \tau_j w_j .
\]
We conclude that the steady state $H$ of the thin fluid film PDE is
\[
\text{\textbf{linearly unstable} if $\tau_1<0$}
\]
because the perturbation $\phi$ grows to infinity, whereas the steady state is
\[
\text{\textbf{linearly stable} if $\tau_1 \geq 0$}
\]
because $\phi$ remains bounded in that case. Remember these stability claims relate only to mean zero (volume preserving) perturbations.

To make these claims more rigorous, we need to understand the eigenvalue problem for $L$.

\subsubsection*{Spectrum of $L$}

We take:

$\Omega=\T = \R / (X\Z)=$ torus of length $X$, so that functions on $\Omega$ are $X$-periodic

$\mathcal{H}=L^2(\T)$, inner product $\la u,v \ra_{L^2} = \int_0^X uv \, dx$

$\mathcal{K}=H^2(\T) \cap \{ u : \int_0^X u \, dx = 0 \}$, with inner product
\[
\la u,v \ra_{H^2} = \int_0^X (u^{\prime \prime} v^{\prime \prime} + u^\prime v^\prime + uv) \, dx
\]

Compact imbedding $\mathcal{K} \hookrightarrow L^2$ by Rellich--Kondrachov

Symmetric sesquilinear form
\[
a(u,v) = \int_0^X \big( u^{\prime \prime} v^{\prime \prime} - g(H) u^\prime v^\prime + Cuv \big) \, dx
\]
where $C>0$ is a sufficiently large constant to be chosen below.

Proof of ellipticity: The quantity $a(u,u)$ has a term of the form $-(u^\prime)^2$, whereas for $\lv u \rv_{H^2}^2$ we need $+(u^\prime)^2$. To get around this obstacle we ``hide'' the $-(u^\prime)^2$ term inside the terms of the form $(u^{\prime \prime})^2$ and $u^2$. Specifically,
\begin{align}
\int_0^X (u^\prime)^2 \, dx
& = - \int_0^X u^{\prime \prime} u \, dx \notag \\
& \leq \int_0^X \big( \delta (u^{\prime \prime})^2 + (4\delta)^{-1} u^2 \big) \, dx \label{eq:cauchydelta}
\end{align}
for any $\delta >0$. Here we used ``Cauchy-with-$\delta$, which is the observation that for any $\alpha,\beta \in \R$,
\[
0 \leq \big( \sqrt{\delta} \alpha \pm (4\delta)^{-1/2} \beta \big)^2 \qquad \Longrightarrow \qquad
|\alpha \beta| \leq \delta \alpha^2 + (4\delta)^{-1} \beta^2 .
\]
Next,
\begin{align*}
& a(u,u) \\
& \geq \int_0^X \big( (u^{\prime \prime})^2 - \big( \lv g(H) \rv_{L^\infty} + \frac{1}{2} \big) (u^\prime)^2 +  \frac{1}{2} (u^\prime)^2 + Cu^2 \big) \, dx \\
& \geq \int_0^X \Big( \big[ 1 - \big( \lv g(H) \rv_{L^\infty} + \frac{1}{2} \big) \delta \big] (u^{\prime \prime})^2 +  \frac{1}{2} (u^\prime)^2 + \big[ C-\big( \lv g(H) \rv_{L^\infty} + \frac{1}{2} \big) (4\delta)^{-1} \big] u^2 \Big) \, dx \\
& \qquad \qquad \qquad \qquad \text{by \eqref{eq:cauchydelta}} \\
& \geq \frac{1}{2} \lv u \rv_{H^2}^2
\end{align*}
provided we choose $\delta$ sufficiently small (depending on $H$) and then choose $C$ sufficiently large. Thus ellipticity holds.

\medskip
The discrete spectral Theorem~\ref{th:spec} now yields an ONB of eigenfunctions $\{ w_j \}$ with eigenvalues $\gamma_j$ such that
\[
a(w_j,v) = \gamma_j \la w_j,v \ra_{L^2} \qquad \forall v \in \mathcal{K} .
\]
Writing $\gamma_j=\tau_j+C$ we get
\[
\int_0^X \big( w_j^{\prime \prime} v^{\prime \prime} - g(H) w_j^\prime v^\prime \big) \, dx = \tau_j \int_0^X w_j v \, dx \qquad \forall v \in \mathcal{K} .
\]
These eigenfunctions satisfy $Lw_j = \tau_j w_j$ weakly, and hence also classically (by elliptic regularity, since $H$ and $g$ are smooth).

\paragraph*{Zero eigenvalue due to translational symmetry.} We will show that $\tau=0$ is always an eigenvalue, with eigenfunction $u=H-\overline{H}$, where the constant $\overline{H}$ equals the mean value of the steady state $H$. Indeed,
\[
Lu = L(H-\overline{H}) = H_{xxxx}+ \big( g(H) H_x \big)_x = 0
\]
by the steady state equation \eqref{eq:filmss}.

This zero eigenvalue arises from a \emph{translational perturbation} of the steady state, because choosing
\[
h=H(x+\e)=H(x)+\e H^\prime(x)+O(\e^2)
\]
gives rise to $\phi=H^\prime$ and hence $\psi=H-\overline{H}$.

\subsubsection*{Sufficient condition for linearized instability of nonconstant steady state $H$}
\begin{theorem}[\protect{\cite[Th.~3]{LP2}}] \label{th:filminst}
If $g$ is strictly convex then $\tau_1<0$.
\end{theorem}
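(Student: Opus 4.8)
The plan is to prove $\tau_1<0$ by producing a single trial function whose Rayleigh quotient is strictly negative, in the spirit of Theorem~\ref{th:reactinst}. From the spectral setup earlier in this chapter, the Rayleigh principle for $L$ reads
\[
\tau_1 = \min_{w \in \mathcal{K} \setminus \{ 0 \}} \frac{\int_0^X \big( (w^{\prime \prime})^2 - g(H)(w^\prime)^2 \big) \, dx}{\int_0^X w^2 \, dx} , \qquad \mathcal{K} = H^2(\T) \cap \Big\{ u : \int_0^X u \, dx = 0 \Big\} .
\]
I would take $w = H^\prime$. This is admissible: $H^\prime$ is smooth and $X$-periodic, it is mean zero since $\int_0^X H^\prime \, dx = 0$ by periodicity, and $H^\prime \not\equiv 0$ since $H$ is nonconstant. (Morally this is the analogue of the trial function $U^{\prime \prime}$ in Theorem~\ref{th:reactinst}: in the $\psi$-variable, $w = H^\prime$ corresponds to the physical perturbation $\phi = H^{\prime \prime}$, which flattens the steady state toward its mean. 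The naive choice $w = H - \overline{H}$, which is the translational zero-mode with $Lw = 0$, would only give $\tau_1 \le 0$.)

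The heart of the proof is the evaluation of the numerator $Q(H^\prime) := \int_0^X \big( (H^{\prime \prime \prime})^2 - g(H)(H^{\prime \prime})^2 \big) \, dx$. Differentiating the steady-state equation $H^{\prime \prime} + G(H) = \beta$ (with $G^\prime = g$) once gives $H^{\prime \prime \prime} = -g(H)H^\prime$, hence $(H^{\prime \prime \prime})^2 = g(H)^2 (H^\prime)^2$. Integrating $\int_0^X g(H)(H^{\prime \prime})^2 \, dx$ by parts (differentiating the factor $g(H)H^{\prime \prime}$ and antidifferentiating $H^{\prime \prime}$) and using $H^{\prime \prime \prime} = -g(H)H^\prime$ once more gives
\[
\int_0^X g(H)(H^{\prime \prime})^2 \, dx = \int_0^X g(H)^2 (H^\prime)^2 \, dx - \int_0^X g^\prime(H)(H^\prime)^2 H^{\prime \prime} \, dx ,
\]
so the two $g(H)^2(H^\prime)^2$ terms cancel and $Q(H^\prime) = \int_0^X g^\prime(H)(H^\prime)^2 H^{\prime \prime} \, dx$. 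One more integration by parts, using $(H^\prime)^2 H^{\prime \prime} = \tfrac13 \big( (H^\prime)^3 \big)^\prime$, produces
\[
Q(H^\prime) = -\frac{1}{3} \int_0^X g^{\prime \prime}(H) (H^\prime)^4 \, dx .
\]
Since $g$ is strictly convex, $g^{\prime \prime} > 0$, and since $H$ is nonconstant, $H^\prime \not\equiv 0$; hence $Q(H^\prime) < 0$, so the Rayleigh quotient of $H^\prime$ is strictly negative, and therefore $\tau_1 < 0$.

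The only real difficulty is the bookkeeping in the two integrations by parts above — in particular, recognizing that the translational zero-mode $H - \overline{H}$ must be traded for its derivative $H^\prime$ so that strict convexity can enter (it is the third derivative $G^{\prime \prime \prime} = g^{\prime \prime}$ that ultimately survives), and verifying the cancellation of the $\int_0^X g(H)^2 (H^\prime)^2 \, dx$ terms. Everything else — admissibility of $H^\prime$, smoothness of $H$ by elliptic regularity, and the precise form of the Rayleigh principle — follows directly from the material already developed in this chapter.
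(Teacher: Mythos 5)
Your proof is correct and follows essentially the same route as the paper: the Rayleigh principle with trial function $w=H^\prime$ (admissible since it is periodic, mean zero, and nonzero), reduction of the numerator to $\int_0^X g^\prime(H)(H^\prime)^2 H^{\prime\prime}\,dx$, and a final integration by parts via $(H^\prime)^2H^{\prime\prime}=\tfrac13\big[(H^\prime)^3\big]^\prime$ to obtain $-\tfrac13\int_0^X g^{\prime\prime}(H)(H^\prime)^4\,dx<0$. The only (immaterial) difference is that you organize the intermediate step through the reduced equation $H^{\prime\prime\prime}=-g(H)H^\prime$, whereas the paper integrates by parts directly and invokes the fourth-order steady state equation \eqref{eq:filmss}.
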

For example, the theorem shows that nonconstant steady states are unstable with respect to volume-preserving perturbations if $g(y)=y^p$ with either $p > 1$ or $p<0$.

Incidentally, the theorem is essentially the same as Theorem~\ref{th:reactinst} for the reaction--diffusion PDE, simply writing $g$ instead of $f^\prime$ and noting that our periodic boundary conditions take care of the boundary terms in the integrations by parts.
\begin{proof}
The Rayleigh principle for $L$ says that
\[
\tau_1 = \min \Big\{ \frac{\int_0^X \big( (w^{\prime \prime})^2 - g(H) (w^\prime)^2 \big) \, dx}{\int_0^X w^2 \, dx} :
w \in H^2(\T) \setminus \{ 0 \}, \int_0^X w \, dx = 0 \Big\} .
\]
We choose
\[
w = H^\prime ,
\]
which is not the zero function since $H$ is nonconstant, and note $w$ has mean value zero (as required), by periodicity of $H$. Then the numerator of the Rayleigh quotient for $w$ is
\begin{align*}
& \int_0^X \big( (H^{\prime \prime \prime})^2 - g(H) (H^{\prime \prime})^2 \big) \, dx \\
& = \int_0^X \big( -H^{\prime \prime \prime \prime} - g(H) H^{\prime \prime} \big) H^{\prime \prime} \, dx && \text{by parts} \\
& = \int_0^X g^\prime(H) (H^\prime)^2 H^{\prime \prime} \, dx && \text{by the steady state equation \eqref{eq:filmss}} \\
& = \frac{1}{3} \int_0^X g^\prime(H) \big[ (H^\prime)^3 \big]^\prime \, dx \\
& = - \frac{1}{3} \int_0^X g^{\prime \prime}(H) (H^\prime)^4 \, dx && \text{by parts} \\
& < 0
\end{align*}
by convexity of $g$ and since $H^\prime \not \equiv 0$. Hence $\tau_1 < 0$, by the Rayleigh principle.
\end{proof}

\noindent \emph{Motivation for the choice of trial function.} Our trial function $w=H^\prime$ corresponds to a perturbation $\phi=H^{\prime \prime}$. This perturbation $h=H+\e \phi=H+\e H^{\prime \prime}$ tends to push the steady state towards the constant function. The opposite perturbation would tend to push the steady state towards a ``droplet'' solution that equals $0$ at some point. Thus our instability proof in Theorem~\ref{th:filminst} suggests (in the language of dynamical systems) that a heteroclinic connection might exist between the nonconstant steady state and the constant steady state, and similarly between the nonconstant steady state and a droplet steady state.

\paragraph*{Linear stability of nonconstant steady states.} It is more difficult to prove stability results, because lower bounds on the first eigenvalue are more difficult to prove (generally) than upper bounds.

See \cite[{\S}3.2]{LP2} for some results when $g(y)=y^p,0<p \leq 3/4$, based on time-map monotonicity ideas from the theory of reaction diffusion equations (see Chapter~\ref{ch:reacdiff}).

\subsubsection*{Stability of constant steady states.} Let $\overline{H}>0$ be constant. Then $H \equiv \overline{H}$ is a constant steady state. Its stability is easily determined, as follows.

Linearizing gives
\[
\phi_t = - \phi_{xxxx} - g(\overline{H}) \phi_{xx}
\]
by \eqref{eq:thinlin1}, where the right side is linear and symmetric with \emph{constant} coefficients.

We substitute the periodic Fourier mode $\phi=e^{-\tau t} \exp(2\pi i k x/X)$, where $k \in \Z$ (and $k \neq 0$ since our perturbations have mean value zero), obtaining the eigenvalue
\[
\tau = \big( \frac{2\pi k}{X} \big)^2 \Big( \big( \frac{2\pi k}{X} \big)^2 - g(\overline{H}) \Big) .
\]

If $g \leq 0$ (which means the second order term in the thin film PDE behaves like a forwards heat equation), then $\tau \geq 0$ for each $k$, and so all constant steady states are linearly stable.

If $g>0$ and $\big( \tfrac{2\pi}{X} \big)^2 \geq g(\overline{H})$ then $\tau \geq 0$ for each $k$, and so the constant steady states $\overline{H}$ is linearly stable.

If $g>0$ and $\big( \tfrac{2\pi}{X} \big)^2 < g(\overline{H})$ then the constant steady states $\overline{H}$ is linearly unstable with respect to the $k=1$ mode (and possibly other modes too). In particular, this occurs if $X$ is large enough. Hence we call the thin film PDE ``long-wave unstable'' if $g>0$, since constant steady states are then unstable with respect to perturbations of sufficiently long wavelength.

%
%
%
%

\part{Continuous Spectrum}
\label{part:continuous}

\chapter*{Looking ahead to continuous spectrum}

The discrete spectral theory in Part~\ref{part:discrete} of the course generated, in each application,
\begin{itemize}
\item eigenfunctions $\{ u_j \}$ with ``discrete'' spectrum $\lam_1,\lam_2,\lam_3,\ldots$ satisfying $Lu_j=\lam_j u_j$ where $L$ is a symmetric differential operator, together with
\item a spectral decomposition (or ``resolution'') of each $f \in L^2$ into a sum of eigenfunctions: $f=\sum_j \la f , u_j \ra u_j$.
\end{itemize}
These constructions depended heavily on symmetry of the differential operator $L$ (which ensured symmetry of the sesquilinear form $a$) and on compactness of the imbedding of the Hilbert space $\mathcal{K}$ into $\mathcal{H}$.

For the remainder of the course we retain the symmetry assumption on the operator, but drop the compact imbedding assumption. The resulting ``continuous'' spectrum leads to a decomposition of $f \in L^2$ into an integral of ``almost eigenfunctions''.

We begin with examples, and later put the examples in context by developing some general spectral theory for unbounded, selfadjoint differential operators.

\chapter[Laplacian on whole space]{Computable example: Laplacian (free Schr\"{o}dinger) on all of space}

\label{ch:freeSchr}

\subsubsection*{Goal} To determine for the Laplacian on Euclidean space its continuous spectrum $[0,\infty)$, and the associated spectral decomposition of $L^2$.

\subsubsection*{Spectral decomposition}

The Laplacian $L=-\Delta$ on a bounded domain has discrete spectrum, as we saw in Chapters~\ref{ch:lce} and \ref{ch:aONB}. When the domain expands to be all of space, though, the Laplacian has no eigenvalues at all. For example in $1$ dimension, solutions of $-u^{\prime \prime} =  \lam u$ are linear combinations of $e^{\pm i \sqrt{\lam} x}$, which  oscillates if $\lam>0$, or is constant if $\lam=0$, or grows in one direction or the other if $\lam \in \C \setminus [0,\infty)$. In none of these situations does $u$ belong to $L^2$. (In all dimensions we can argue as follows: if $-\Delta u = \lam u$ and $u \in L^2$ then by taking Fourier transforms, $4\pi^2 |\xi|^2 \widehat{u}(\xi)=\widehat{u}(\xi)$ a.e., and so $\widehat{u}=0$ a.e. Thus no $L^2$-eigenfunctions exist.)

A fundamental difference between the whole space case and the case of bounded domains is that the imbedding $H^1(\Rd) \hookrightarrow L^2(\Rd)$ is not compact. For example, given any nonzero $f \in H^1(\R)$, the functions $f(\cdot - k)$ are bounded in $L^2(\R)$, but have no $L^2$-convergent subsequence as $k \to \infty$. Hence the discrete spectral theorem (Theorem~\ref{th:spec}) is inapplicable.

Nevertheless, the Laplacian $-\Delta$ on $\Rd$ has:
\begin{enumerate}
\item generalized eigenfunctions
\[
v_\omega(x) = e^{2\pi i \omega \cdot x}, \qquad \omega \in \Rd ,
\]
(note that $v_\omega$ is bounded, but it is not an eigenfunction since $v_\omega \not \in L^2$)
which satisfy the eigenfunction equation $-\Delta v_\omega = \lam v_\omega$ with generalized eigenvalue
\[
    \lam = \lam(\omega) = 4\pi^2 |\omega|^2 ,
\]
\begin{figure}[h]
\begin{center}
\includegraphics[width=0.4\textwidth]{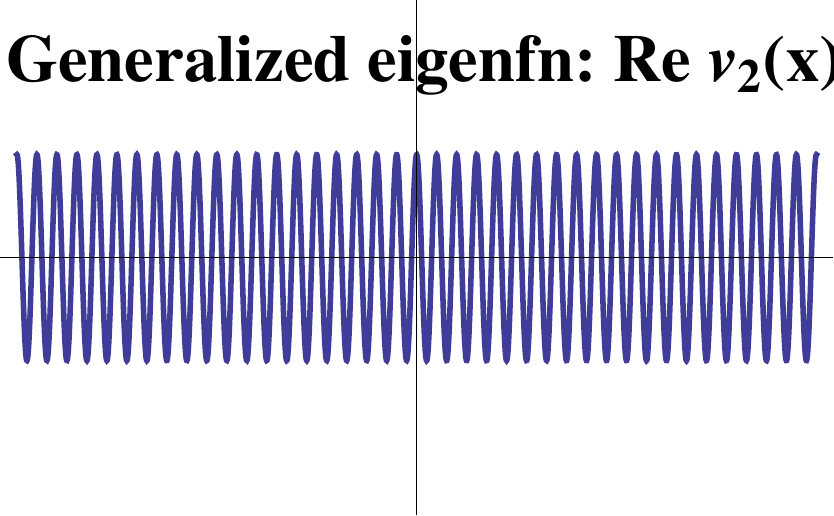}
\end{center}
\end{figure}
\item and a spectral decomposition
\[
f = \int_\Rd \la f , v_\omega \ra \, v_\omega \, d\omega , \qquad \forall f \in L^2(\Rd) .
\]
\end{enumerate}
\emph{Proof of spectral decomposition.} Since
\[
\la f , v_\omega \ra = \int_\Rd f(x) e^{-2\pi i \omega \cdot x} \, dx = \widehat{f}(\omega) ,
\]
the spectral decomposition simply says
\[
f(x) = \int_\Rd \widehat{f}(\omega) e^{2\pi i \omega \cdot x} \, d\omega ,
\]
which is the Fourier inversion formula.

\subsubsection*{Application of spectral decomposition}

One may solve evolution equations by separating variables: for example, the heat equation $u_t=\Delta u$ with initial condition $h(x)$ has solution
\[
u(x,t) = \int_\Rd \widehat{h}(\omega) e^{-\lam(\omega)t} v_\omega(x) \, d\omega .
\]
Note the analogy to the series solution by separation of variables, in the case of discrete spectrum.

\emph{Aside.} Typically, one evaluates the last integral (an inverse Fourier transform) and thus obtains a convolution of the initial data $h$ and the fundamental solution of the heat equation (which is the inverse transform of $e^{-\lam(\omega)t}$).

\subsubsection*{Continuous spectrum $=[0,\infty)$}

The generalized eigenvalue $\lam \geq 0$ is ``almost'' an eigenvalue, in two senses:
\begin{itemize}
\item the eigenfunction equation $(-\Delta - \lam)u=0$ does not have a solution in $L^2$, but it does have a solution $v_\omega \in L^\infty$,
\item a \textbf{Weyl sequence} exists for $-\Delta$ and $\lam$, meaning there exist functions $w_n$ such that
    \begin{itemize}
    \item[(W1)] $\lv (-\Delta-\lam)w_n \rv_{L^2} \to 0$ as $n \to \infty$,
    \item[(W2)] $\lv w_n \rv_{L^2}=1$,
    \item[(W3)] $w_n \rightharpoonup 0$ weakly in $L^2$ as $n \to \infty$.
    \end{itemize}
\end{itemize}
We prove existence of a Weyl sequence in the Proposition below. Later we will define the \textbf{continuous spectrum} to consist of those $\lam$-values for which a Weyl sequence exists. Thus the continuous spectrum of $-\Delta$ is precisely the nonnegative real axis. Recall it is those values of $\lam$ that entered into our spectral decomposition earlier in the chapter.
\begin{figure}[h]
\begin{center}
\includegraphics[width=0.4\textwidth]{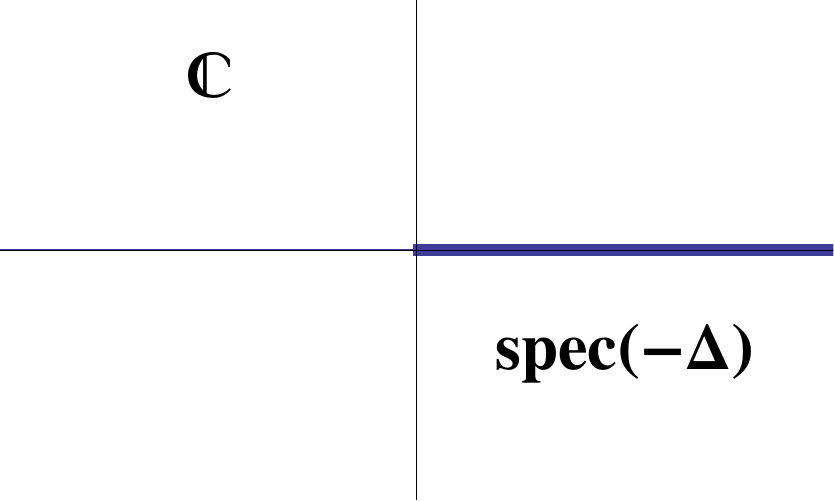}
\end{center}
\end{figure}

\medskip
\noindent \emph{Remark.} Existence of a Weyl sequence ensures that $(-\Delta-\lam)$ does not have a bounded inverse from $L^2 \to L^2$, for if we write $f_n=(-\Delta - \lam)w_n$ then
\[
\frac{\lv (-\Delta-\lam)^{-1}f_n \rv_{L^2}}{\lv f_n \rv_{L^2}} = \frac{\lv w_n \rv_{L^2}}{\lv (-\Delta-\lam)w_n \rv_{L^2}} \to \infty
\]
as $n \to \infty$, by (W1) and (W2). In this way, existence of a Weyl sequence is similar to existence of an eigenfunction, which also prevents invertibility of $(-\Delta-\lam)$.

\begin{proposition}[Weyl sequences for negative Laplacian] \label{le:weylexist}
A Weyl sequence exists for $-\Delta$ and $\lam \in \C$ if and only if $\lam \in [0,\infty)$.
\end{proposition}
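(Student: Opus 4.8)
The plan is to treat the two implications separately, using the Fourier transform, which converts $-\Delta$ into multiplication by $4\pi^2|\xi|^2$ and turns the problem into an elementary estimate.

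First I would dispose of the ``only if'' direction by contraposition: if $\lam \notin [0,\infty)$, then no sequence can satisfy even (W1) and (W2), let alone all three conditions. Applying the Fourier transform and Plancherel's theorem, (W1) becomes $\lv (4\pi^2|\xi|^2 - \lam)\widehat{w_n} \rv_{L^2} \to 0$ and (W2) becomes $\lv \widehat{w_n} \rv_{L^2} = 1$. Since $\{ 4\pi^2|\xi|^2 : \xi \in \Rd \} = [0,\infty)$, the quantity $\delta := \operatorname{dist}(\lam,[0,\infty)) = \inf_{\xi} |4\pi^2|\xi|^2 - \lam|$ is strictly positive when $\lam \notin [0,\infty)$, whence
\[
1 = \lv \widehat{w_n} \rv_{L^2} \le \delta^{-1} \lv (4\pi^2|\xi|^2 - \lam)\widehat{w_n} \rv_{L^2} \to 0 ,
\]
a contradiction. (This is just a quantitative form of the Remark preceding the Proposition: $(-\Delta-\lam)^{-1}$ is bounded on $L^2$ for such $\lam$.)

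For the ``if'' direction, assume $\lam \ge 0$ and construct a Weyl sequence by localizing the generalized eigenfunction. Choose $\omega \in \Rd$ with $4\pi^2|\omega|^2 = \lam$ (so $\omega=0$ when $\lam=0$), fix a bump $\chi \in C^\infty_0(\Rd)$ normalized by $\lv\chi\rv_{L^2}=1$, and set
\[
w_n(x) = n^{-d/2}\chi(x/n)\, e^{2\pi i \omega \cdot x} .
\]
Then (W2) holds by rescaling. For (W1), the Leibniz rule gives $(-\Delta-\lam)w_n = -e^{2\pi i \omega\cdot x}\big( \Delta g_n + 4\pi i\, \omega\cdot\nabla g_n \big)$ with $g_n(x)=n^{-d/2}\chi(x/n)$; since $\lv\nabla g_n\rv_{L^2} = n^{-1}\lv\nabla\chi\rv_{L^2}$ and $\lv\Delta g_n\rv_{L^2} = n^{-2}\lv\Delta\chi\rv_{L^2}$, the triangle inequality yields $\lv(-\Delta-\lam)w_n\rv_{L^2} = O(1/n) \to 0$. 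For (W3), first test against $\phi \in C^\infty_0(\Rd)$: $|\la w_n,\phi\ra| \le n^{-d/2}\lv\chi\rv_{L^\infty}\lv\phi\rv_{L^1} \to 0$; then, since $\lv w_n\rv_{L^2}=1$ uniformly and $C^\infty_0$ is dense in $L^2$, an $\e/3$ argument upgrades this to $\la w_n,\phi\ra \to 0$ for every $\phi \in L^2$, i.e. $w_n \rightharpoonup 0$.

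The arguments are short and I do not expect a genuine obstacle. The only step requiring a little care is (W3), where one must pass from test functions to arbitrary elements of $L^2$ using uniform boundedness of $\{w_n\}$; it may also be worth remarking that (W1) and (W2) alone already force $\lam\in[0,\infty)$, so condition (W3) plays no role in the equivalence and is merely a convenient strengthening of the notion of Weyl sequence.
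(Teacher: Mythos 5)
Your proof is correct and follows essentially the same route as the paper's: truncating the plane wave $e^{2\pi i\omega\cdot x}$ with a rescaled bump for the ``if'' direction, and using the Fourier transform together with the lower bound $|4\pi^2|\xi|^2-\lam|\geq \operatorname{dist}(\lam,[0,\infty))$ to rule out (W1)--(W2) for the ``only if'' direction. The only differences are cosmetic (normalizing $\chi$ rather than carrying a constant $c_n$, and verifying (W3) by density of $C^\infty_0$ instead of a near/far splitting of $f$), and your closing remark that (W3) is not needed for the equivalence matches the paper's own aside.
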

\begin{proof}
``$\Longleftarrow$'' Fix $\lam \in [0,\infty)$ and choose $\omega \in \Rd$ with $4\pi |\omega|^2=\lam$. Take a cut-off function $\kappa \in C^\infty_0(\Rd)$ such that $\kappa \equiv 1$ on the unit ball $B(1)$ and $\kappa \equiv 0$ on $\Rd \setminus B(2)$. Define a cut-off version of the generalized eigenfunction, by
\[
w_n = c_n \kappa(\frac{x}{n}) v_\omega(x)
\]
where the normalizing constant is
\[
c_n = \frac{1}{n^{d/2} \lv \kappa \rv_{L^2}} .
\]

\begin{figure}[h]
\begin{center}
\includegraphics[width=0.4\textwidth]{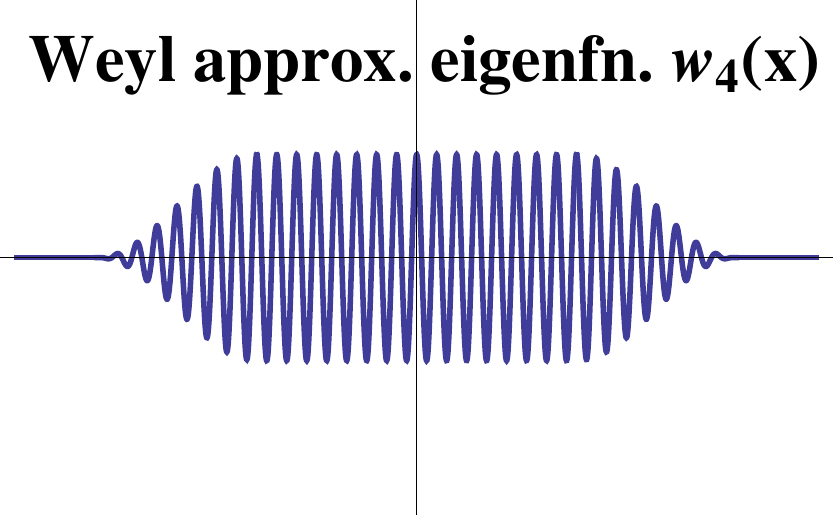}
\end{center}
\end{figure}

First we prove (W1). We have
\begin{align*}
& (\lam+\Delta)w_n \\
& = c_n(\lam v_\omega + \Delta v_\omega) \kappa(\frac{x}{n}) + 2\frac{c_n}{n} \nabla v_\omega(x) \cdot (\nabla \kappa)(\frac{x}{n}) + \frac{c_n}{n^2} v_\omega(x) (\Delta \kappa)(\frac{x}{n}) .
\end{align*}
The first term vanishes because $\Delta v_\omega = -4\pi |\omega|^2 v_\omega$ pointwise. In the third term, note that $v_\omega$ is a bounded function, and that a change of variable shows
\[
\frac{c_n}{n^2} \lv (\Delta \kappa)(\frac{\cdot}{n}) \rv_{L^2} = \frac{1}{n^2} \frac{\lv \Delta \kappa \rv_{L^2}}{\lv \kappa \rv_{L^2}} \to 0 .
\]
The second term similarly vanishes in the limit, as $n \to \infty$. Hence $(\lam+\Delta)w_n \to 0$ in $L^2$, which is (W1).

For (W2) we simply observe that $|v_\omega(x)|=1$ pointwise, so that $\lv w_n \rv_{L^2}=1$ by a change of variable, using the definition of $c_n$.

To prove (W3), take $f \in L^2$ and let $R>0$. We decompose $f$ into ``near'' and ``far'' components, as $f=g+h$ where $g=f 1_{B(R)}$ and $h=f 1_{\Rd \setminus B(R)}$. Then
\[
\la f , w_n \ra_{L^2} = \la g , w_n \ra_{L^2} + \la h , w_n \ra_{L^2} .
\]
We have
\[
\big| \la g , w_n \ra_{L^2} \big| \leq c_n \lv \kappa \rv_{L^\infty} \lv g \rv_{L^1} \to 0
\]
as $n \to \infty$, since $c_n \to 0$. Also, by Cauchy--Schwarz and (W2) we see
\[
\limsup_{n \to \infty} \big| \la h , w_n \ra_{L^2} \big| \leq \lv h \rv_{L^2} .
\]
This last quantity can be made arbitrarily small by letting $R \to \infty$, and so $\lim_{n \to \infty} \la f , w_n \ra_{L^2} = 0$. That is, $w_n \rightharpoonup 0$ weakly.

\medskip
``$\Longrightarrow$'' Assume $\lam \in \C \setminus [0,\infty)$, and let
\[
\delta = \text{dist}\, \big( \lam, [0,\infty) \big)
\]
so that $\delta > 0$.

Suppose (W1) holds, and write $g_n = (-\Delta - \lam)w_n$. Then
\begin{align*}
\widehat{g_n}(\xi) & = (4\pi^2 |\xi|^2 - \lam) \widehat{w_n}(\xi) \\
\widehat{w_n}(\xi) & = \frac{1}{(4\pi^2 |\xi|^2 - \lam)}
\widehat{g_n}(\xi) \\
|\widehat{w_n}(\xi)| & \leq \delta^{-1} |\widehat{g_n}(\xi)| \\
\end{align*}
and hence
\begin{align*}
\lv w_n \rv_{L^2} = \lv \widehat{w_n} \rv_{L^2}
& \leq \delta^{-1} \lv \widehat{g_n} \rv_{L^2} \\
& = \delta^{-1} \lv g_n \rv_{L^2} \\
& \to 0
\end{align*}
by (W1). Thus (W2) does not hold.

\smallskip
(\emph{Aside.} The calculations above show, in fact, that $(-\Delta-\lam)^{-1}$ is bounded from $L^2 \to L^2$ with norm bound $\delta^{-1}$, when $\lam \notin [0,\infty)$.)
\end{proof}

\chapter[Schr\"{o}dinger with $-2\sech^2$ potential]{Computable example: Schr\"{o}dinger with a bounded potential well}

\label{ch:sechsquared}

\subsubsection*{Goal} To show that the Schr\"{o}dinger operator
\[
L= - \tfrac{d^2\ }{d x^2} - 2\sech^2 x
\]
in $1$ dimension has a single negative eigenvalue (discrete spectrum) as well as nonnegative continuous spectrum $[0,\infty)$. The spectral decomposition will show the potential is reflectionless.

\begin{figure}[h]
\begin{center}
\includegraphics[width=0.4\textwidth]{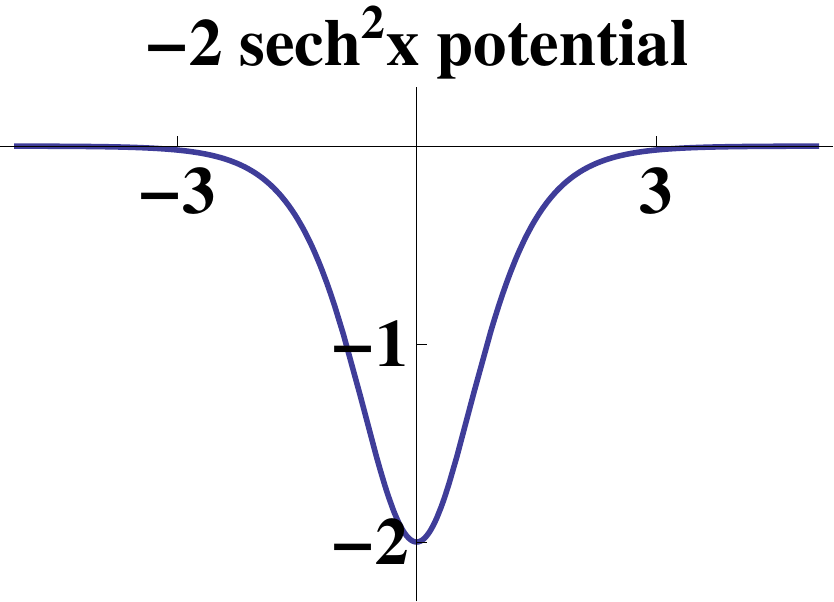} \quad
\includegraphics[width=0.4\textwidth]{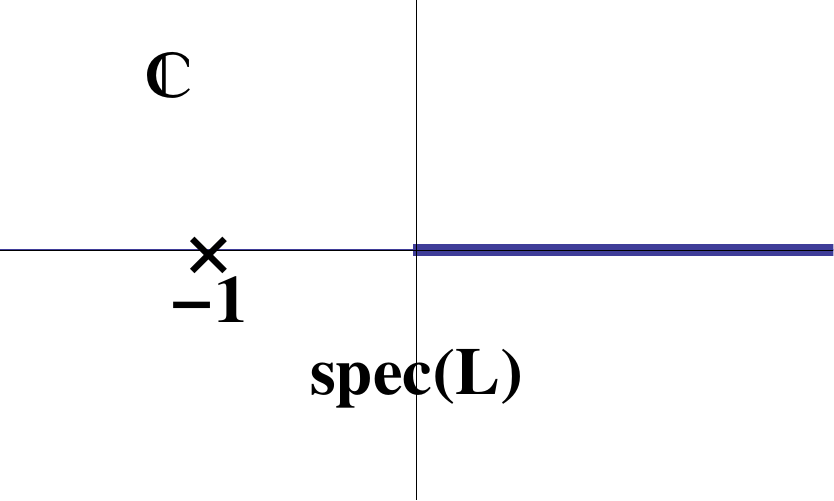}
\end{center}
\end{figure}

\paragraph*{Reference} \cite{Ke} Section 7.5

\subsubsection*{Discrete spectrum = $\{ -1 \}$}

We claim $-1$ is an eigenvalue of $L$ with eigenfunction $\sech x$. This fact can be checked directly, but we will proceed more systematically by factoring the Schr\"{o}dinger operator with the help of the first order operators
\begin{align*}
L^+ & = - \frac{d\ }{d x} + \tanh x , \\
L^- & = \ \frac{d\ }{d x} + \tanh x .
\end{align*}
We compute
\begin{align*}
L^+ L^- - 1
& = \big( - \frac{d\ }{d x} + \tanh x \big) \big( \frac{d\ }{d x} + \tanh x \big) - 1 \\
& = - \frac{d^2\ }{d x^2} - (\tanh x)^\prime + \tanh^2 x - 1 \\
& = - \frac{d^2\ }{d x^2} - 2\sech^2 x \\
& = L
\end{align*}
since $(\tanh)^\prime = \sech^2$ and $1-\tanh^2 = \sech^2$. Thus
\begin{equation} \label{eq:Lcommute}
L = L^+ L^- - 1 .
\end{equation}

It follows that functions in the kernel of $L^-$ are eigenfunctions of $L$ with eigenvalue $\lam=-1$. To find the kernel we solve:
\begin{align*}
L^- v & = 0 \\
v^\prime + (\tanh x)v & = 0 \\
(\cosh x) v^\prime + (\sinh x)v & = 0 \\
(\cosh x)v & = \text{const.} \\
v & = c \sech x
\end{align*}
Clearly $\sech x \in L^2(\R)$, since $\sech$ decays exponentially. Thus $-1$ lies in the discrete spectrum of $L$, with eigenfunction $\sech x$.

\begin{figure}[h]
\begin{center}
\includegraphics[width=0.4\textwidth]{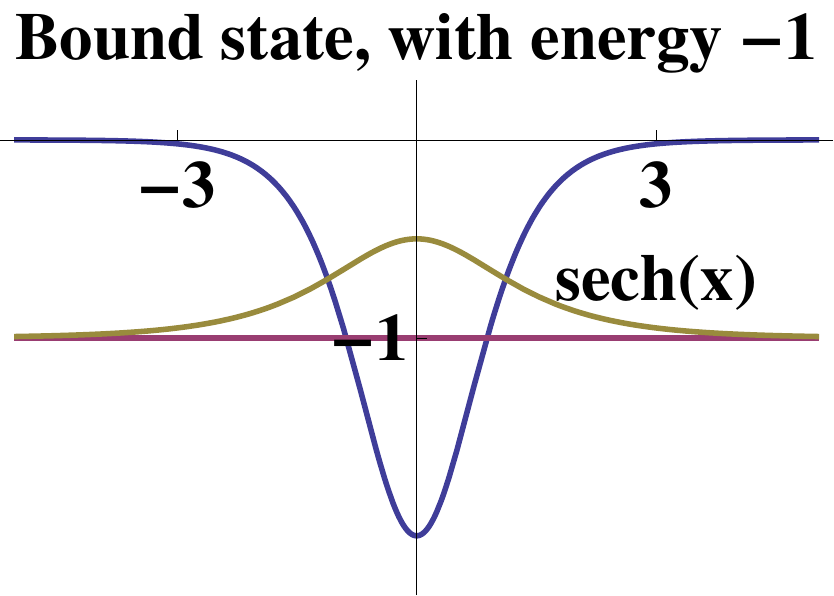}
\end{center}
\end{figure}

Are there any other eigenvalues? No! Argue as follows. By composing $L^+$ and $L^-$ in the reverse order we find
\begin{align}
L^- L^+ - 1
& = \big( \frac{d\ }{d x} + \tanh x \big) \big( - \frac{d\ }{d x} + \tanh x \big) - 1 \notag \\
& = - \frac{d^2\ }{d x^2} + (\tanh x)^\prime + \tanh^2 x - 1 \notag \\
& = - \frac{d^2\ }{d x^2} . \label{eq:reversederiv}
\end{align}
From \eqref{eq:Lcommute} and \eqref{eq:reversederiv} we deduce
\[
- \frac{d^2\ }{d x^2} L^- = L^- L .
\]
Thus if $Lv=\lam v$ then $- \tfrac{d^2\ }{d x^2} (L^- v) = L^- L v = \lam (L^- v)$. By solving for $L^- v$ in terms of $e^{\pm i \sqrt{\lam}x}$, and then integrating to obtain $v$, we conclude after some thought (omitted) that the only way for $v$ to belong to $L^2(\R)$ is to have $L^- v = 0$ and hence $v=c \sech x$, so that $\lam=-1$.

\subsubsection*{Continuous spectrum $\supset [0,\infty)$}

Let $\lam \in [0,\infty)$. Generalized eigenfunctions with $Lv=\lam v$ certainly exist: choose $\omega \in \R$ with $\lam=4\pi^2 \omega^2$ and define
\[
v(x) = L^+ (e^{2\pi i \omega x})  = (\tanh x - 2\pi i\omega) e^{2\pi i \omega x} ,
\]
which is bounded but not square integrable. We compute
\begin{align*}
Lv
& = (L^+ L^- - 1)L^+ (e^{2\pi i \omega x}) && \text{by \eqref{eq:Lcommute}} \\
& = L^+ (L^- L^+ - 1) (e^{2\pi i \omega x}) \\
& = - L^+ \frac{d^2\ }{d x^2} (e^{2\pi i \omega x}) && \text{by \eqref{eq:reversederiv}} \\
& = L^+ (4\pi^2 \omega^2 e^{2\pi i \omega x}) \\
& = \lam v ,
\end{align*}
which verifies that $v(x)$ is a generalized eigenfunction.

We can further prove existence of a Weyl sequence for $L$ and $\lam$ by adapting Lemma~\ref{le:weylexist} ``$\Longleftarrow$'', using the same Weyl functions $w_n(x)$ as for the free Schr\"{o}dinger operator $-\Delta$. The only new step in the proof, for proving $\lv (L-\lam)w_n \rv_{L^2} \to 0$ in (W1), is to observe that
\begin{align*}
|2\sech^2 x \, w_n(x)|
&  = 2 c_n |\kappa(\frac{x}{n}) e^{2\pi i \omega x}| \sech^2 x \\
&  \leq 2 c_n \lv \kappa \rv_{L^\infty} \sech^2 x \\
& \to 0
\end{align*}
in $L^2(\R)$ as $n \to \infty$, because $c_n \to 0$. (\emph{Note.} This part of the proof works not only for the $\sech^2$ potential, but for any potential belonging to $L^2$.)

We have shown that the continuous spectrum contains $[0,\infty)$. We will prove the reverse inclusion at the end of the chapter.

\paragraph*{Generalized eigenfunctions as traveling waves.} The eigenfunction (``bound state'') $v(x)=\sech x$ with eigenvalue (``energy'') $-1$ produces a standing wavefunction
\[
u=e^{it} \sech x
\]
satisfying the time-dependent Schr\"{o}dinger equation
\[
iu_t = Lu .
\]

The generalized eigenfunction
\begin{equation} \label{eq:geneig}
v(x) = (\tanh x - 2\pi i\omega) e^{2\pi i \omega x}
\end{equation}
with generalized eigenvalue $\lam=4 \pi^2 \omega^2$ similarly produces a standing wave
\[
u = e^{- i 4 \pi^2 \omega^2 t} (\tanh x - 2\pi i\omega) e^{2\pi i \omega x} .
\]

\begin{figure}[h]
\begin{center}
\includegraphics[width=0.4\textwidth]{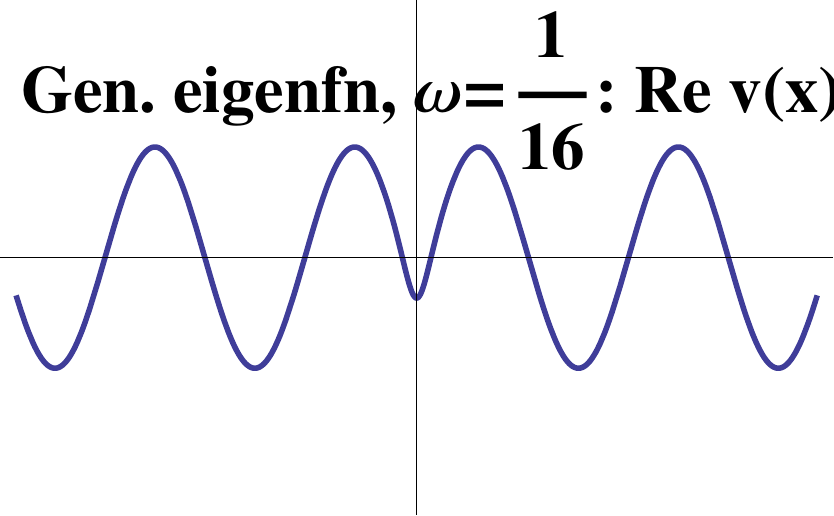}
\end{center}
\end{figure}

More usefully, we rewrite this formula as a traveling plane wave multiplied by an $x$-dependent amplitude:
\begin{equation} \label{eq:rightwave}
u = (\tanh x - 2\pi i\omega) e^{2\pi i \omega (x - 2\pi \omega t)} .
\end{equation}
The amplitude factor serves to quantify the effect of the potential on the traveling wave: in the absence of a potential, the amplitude would be identically $1$, since the plane wave $e^{2\pi i \omega (x - 2\pi \omega t)}$ solves the free Schr\"{o}dinger equation $iu_t = -\Delta u$.

\paragraph*{Reflectionless nature of the potential, and a nod to scattering theory.}
One calls the potential $-2\sech^2 x$ ``reflectionless'' because the right-moving wave in \eqref{eq:rightwave} passes through the potential with none of its energy reflected into a left-moving wave. In other words, the generalized eigenfunction \eqref{eq:geneig} has the form $ce^{2\pi i \omega x}$ both as $x \to -\infty$ and as $x \to \infty$ (with different constants, it turns out, although the constants are equal in magnitude).

This reflectionless property is unusual. A typical Schr\"{o}dinger potential would produce generalized eigenfunctions equalling approximately
\[
c_I e^{2\pi i \omega x} + c_R e^{-2\pi i \omega x} \qquad \text{as $x \to -\infty$}
\]
and
\[
c_T e^{2\pi i \omega x} \qquad \text{as $x \to \infty$}
\]
(or similarly with the roles of $\pm \infty$ interchanged). Here $|c_I|$ is the amplitude of the \emph{incident} right-moving wave, $|c_R|$ is the amplitude of the left-moving wave \emph{reflected} by the potential, and $|c_T|$ is the amplitude of the right-moving wave \emph{transmitted} through the potential. Conservation of $L^2$-energy demands that
\[
|c_I|^2 = |c_R|^2 + |c_T|^2 .
\]
For a gentle introduction to this ``scattering theory'' see \cite[Section 7.5]{Ke}. Then one can proceed to the book-length treatment in \cite{RS3}.

\subsubsection*{Spectral decomposition of $L^2$}
Analogous to an orthonormal expansion in terms of eigenfunctions, we have:
\begin{theorem} \label{th:sechdecomp}
\[
f = \frac{1}{2} \la f , \sech \ra \sech + \int_\R \la f , L^+ v_\omega \ra L^+ v_\omega \, \frac{d\omega}{1+4\pi^2 \omega^2}, \qquad \forall f \in L^2(\R),
\]
where $L^+ v_\omega(x)=(\tanh x - 2\pi i\omega) e^{2\pi i \omega x}$ is the generalized eigenfunction at frequency $\omega$.
\end{theorem}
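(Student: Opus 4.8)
The plan is to reduce the claimed formula to ordinary Fourier inversion, by exploiting the factorizations $L=L^+L^--1$ and $-\tfrac{d^2}{dx^2}=L^-L^+-1$ and the intertwining relations they produce. First I would record that $\lv\sech\rv_{L^2}^2=\int_\R\sech^2x\,dx=[\tanh x]_{-\infty}^{\infty}=2$, so the first term $\tfrac12\la f,\sech\ra\sech$ is exactly the orthogonal projection $Pf$ of $f$ onto the line $\C\sech$. Thus it suffices to prove
\[
f-Pf=\int_\R \la f,L^+v_\omega\ra\,L^+v_\omega\,\frac{d\omega}{1+4\pi^2\omega^2},
\]
and it is enough to do this for $f$ in a convenient dense class (say Schwartz functions) and then pass to general $f\in L^2$ by density; as is standard for continuous spectra, $\la f,L^+v_\omega\ra$ for general $f$ is understood as the $L^2(\R_\omega)$-limit of the Schwartz approximants, exactly as $\la f,v_\omega\ra=\widehat f(\omega)$ was in Chapter~\ref{ch:freeSchr}.

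For Schwartz $f$, integration by parts (boundary terms vanish since $f$ decays) combined with $L^+v_\omega(x)=(\tanh x-2\pi i\omega)e^{2\pi i\omega x}$ and $v_\omega'=2\pi i\omega v_\omega$ gives $\la f,L^+v_\omega\ra=\la L^-f,v_\omega\ra=\widehat{L^-f}(\omega)$. Set $h:=(1-\tfrac{d^2}{dx^2})^{-1}(L^-f)$, i.e. $\widehat h(\omega)=\widehat{L^-f}(\omega)/(1+4\pi^2\omega^2)$; since $L^-f$ is Schwartz, so is $h$. Splitting $L^+v_\omega=(\tanh x)v_\omega-2\pi i\omega v_\omega$ and applying Fourier inversion to the two resulting $\omega$-integrals (justifying the interchange with $\tfrac{d}{dx}$ by rapid decay of $\widehat h$) shows
\[
\int_\R \la f,L^+v_\omega\ra\,L^+v_\omega\,\frac{d\omega}{1+4\pi^2\omega^2}=(\tanh x)\,h(x)-h'(x)=L^+h .
\]
Now I would use $1-\tfrac{d^2}{dx^2}=L^-L^+$ (which is \eqref{eq:reversederiv}) to rewrite the definition of $h$ as $L^-L^+h=L^-f$, hence $L^-(L^+h-f)=0$; since $L^+h-f$ is Schwartz and the only $L^2$ solutions of $L^-u=0$ are multiples of $\sech$, we conclude $L^+h-f=c\,\sech$ for some constant $c$.

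It then remains to identify $c$. Pairing $L^+h-f=c\,\sech$ with $\sech$ in $L^2$ and using $\la L^+h,\sech\ra=\la h,L^-\sech\ra=0$ (integration by parts, together with $L^-\sech=0$) gives $-\la f,\sech\ra=c\lv\sech\rv^2=2c$, so $c=-\tfrac12\la f,\sech\ra$ and therefore $L^+h=f-Pf$, which is the desired identity for Schwartz $f$. Finally, to extend to all of $L^2$, I would prove the companion Plancherel identity
\[
\lv f\rv^2=\tfrac12\,|\la f,\sech\ra|^2+\int_\R |\la f,L^+v_\omega\ra|^2\,\frac{d\omega}{1+4\pi^2\omega^2},
\]
which follows from $\lv f-Pf\rv^2=\la L^+h,L^+h\ra=\la h,(1-\tfrac{d^2}{dx^2})h\ra=\int_\R(1+4\pi^2\omega^2)|\widehat h(\omega)|^2\,d\omega=\int_\R|\widehat{L^-f}(\omega)|^2/(1+4\pi^2\omega^2)\,d\omega$ together with $\lv Pf\rv^2=\tfrac12|\la f,\sech\ra|^2$; this makes the map $f\mapsto(1+4\pi^2\omega^2)^{-1/2}\la f,L^+v_\omega\ra$ bounded (indeed a partial isometry), so both sides of the asserted decomposition depend continuously on $f\in L^2$ and the identity persists by density. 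I expect the main obstacle to be the bookkeeping around the non-$L^2$ generalized eigenfunctions — rigorously justifying the interchanges of the $\omega$-integral with differentiation and with $L^+$, and defining $\la f,L^+v_\omega\ra$ for $f\in L^2$ — rather than any genuinely new idea; the rest is the factorization/intertwining argument sketched above.
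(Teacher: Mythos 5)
Your proof is correct, and it is actually more complete than the one in the notes. The paper's argument is an ansatz-and-necessity sketch: it \emph{assumes} a decomposition of the form $f = c\la f,\sech\ra\sech + \int m_f(\omega)\la f,L^+v_\omega\ra L^+v_\omega\,d\omega$, pins down $c=\tfrac12$ by pairing with $\sech$, and pins down $m_f(\omega)=(1+4\pi^2\omega^2)^{-1}$ by applying $L^-$ and invoking Fourier inversion, explicitly leaving ``sufficiency by reversing the steps'' to the reader. You carry out precisely that sufficiency direction: starting from $h:=(1-\tfrac{d^2}{dx^2})^{-1}(L^-f)$ you identify the $\omega$-integral as $L^+h$, use $L^-L^+=1-\tfrac{d^2}{dx^2}$ to get $L^-(L^+h-f)=0$, hence $L^+h-f=c\,\sech$, and fix $c$ by the orthogonality $\la L^+h,\sech\ra=\la h,L^-\sech\ra=0$. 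The raw ingredients are identical to the paper's (the factorizations \eqref{eq:Lcommute}--\eqref{eq:reversederiv}, the adjoint relation $\la f,L^+v_\omega\ra=\widehat{L^-f}(\omega)$, and $\lv\sech\rv^2=2$), but your organization buys a genuine forward proof on a dense class plus the Plancherel identity of Corollary~\ref{co:sechplancherel} as a byproduct, which in turn justifies the extension to all of $L^2$ by boundedness of the analysis map $f\mapsto(1+4\pi^2\omega^2)^{-1/2}\la f,L^+v_\omega\ra$. The one point to keep honest is the one you already flag: for general $f\in L^2$ the pairing $\la f,L^+v_\omega\ra$ and the $\omega$-integral must be read as $L^2$-limits of Schwartz approximants (exactly as for the $L^2$ Fourier transform), since $L^+v_\omega\notin L^2$; with that convention in place the density argument closes the proof.
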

The discrete part of the decomposition has the same form as the continuous part, in fact, because $\sech=-L^+(\sinh)$.
\begin{proof}
We will sketch the main idea of the proof, and leave it to the reader to make the argument rigorous.

By analogy with an orthonormal expansion in the discrete case, we assume that $f \in L^2(\R)$ has a decomposition in terms of the eigenfunction $\sech x$ and the generalized eigenfunctions $L^+ v_\omega$ in the form
\[
f = c \la f , \sech \ra \sech + \int_\R m_f(\omega) \la f , L^+ v_\omega \ra L^+ v_\omega \, d\omega ,
\]
where the coefficient $c$ and multiplier $m_f(\omega)$ are to be determined.

Taking the inner product with $\sech x$ implies that $c=\tfrac{1}{2}$, since $\lv \sech \rv_{L^2(\R)}^2 = 2$ and $\la L^+ v_\omega , \sech \ra = \la v_\omega , L^- \sech \ra = 0$.

Next we annihilate the $\sech$ term by applying $L^-$ to both sides:
\[
L^- f = L^- \Big( \int_\R m_f(\omega) \la f , L^+ v_\omega \ra L^+ v_\omega \, d\omega \Big) .
\]
Note that by integration by parts,
\[
\la f , L^+ v_\omega \ra = \la L^- f , v_\omega \ra = \widehat{(L^- f)}(\omega) .
\]
Hence
\begin{align*}
L^- f
& = L^- \Big( \int_\R m_f(\omega) \widehat{(L^- f)}(\omega) L^+ v_\omega \, d\omega \Big) \\
& = \int_\R m_f(\omega) \widehat{(L^- f)}(\omega) L^- L^+ v_\omega \, d\omega \\
& = \int_\R m_f(\omega) \widehat{(L^- f)}(\omega) (1+4\pi^2 \omega^2) v_\omega \, d\omega
\end{align*}
by \eqref{eq:reversederiv}. Thus the multiplier should be $m_f(\omega) =1/(1+4\pi^2 \omega^2)$, in order for Fourier inversion to hold. This argument shows the necessity of the formula in the theorem, and one can show sufficiency by suitably reversing the steps.
\end{proof}

The theorem implies a Plancherel type identity.
\begin{corollary} \label{co:sechplancherel}
\[
\lv f \rv_{L^2}^2 = \frac{1}{2} |\la f , \sech \ra|^2 + \int_\R | \la f , L^+ v_\omega \ra |^2 \, \frac{d\omega}{1+4\pi^2 \omega^2}, \qquad \forall f \in L^2(\R) .
\]
\end{corollary}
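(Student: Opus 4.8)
The plan is to obtain the Plancherel identity directly from the spectral decomposition of Theorem~\ref{th:sechdecomp} by pairing that decomposition with $f$ itself in $L^2(\R)$. Starting from
\[
f = \tfrac{1}{2} \la f , \sech \ra \sech + \int_\R \la f , L^+ v_\omega \ra \, L^+ v_\omega \, \frac{d\omega}{1+4\pi^2 \omega^2} ,
\]
I would substitute this expansion into the first argument of $\la f , f \ra = \lv f \rv_{L^2}^2$ and pull the scalars and the $\omega$-integral outside the inner product, giving
\[
\lv f \rv_{L^2}^2 = \tfrac{1}{2} \la f , \sech \ra \, \la \sech , f \ra + \int_\R \la f , L^+ v_\omega \ra \, \la L^+ v_\omega , f \ra \, \frac{d\omega}{1+4\pi^2 \omega^2} .
\]
Conjugate symmetry of the inner product, $\la \sech , f \ra = \overline{\la f , \sech \ra}$ and $\la L^+ v_\omega , f \ra = \overline{\la f , L^+ v_\omega \ra}$, converts the two terms into $\tfrac{1}{2} |\la f , \sech \ra|^2$ and $\int_\R |\la f , L^+ v_\omega \ra|^2 \, d\omega/(1+4\pi^2 \omega^2)$, which is exactly the asserted identity.

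Two routine points support this. First, although $L^+ v_\omega \notin L^2(\R)$, the pairing $\la f , L^+ v_\omega \ra$ is still meaningful for a.e.\ $\omega$: as noted in the proof of Theorem~\ref{th:sechdecomp}, integration by parts gives $\la f , L^+ v_\omega \ra = \la L^- f , v_\omega \ra = \widehat{(L^- f)}(\omega)$, the $L^2$-Fourier transform of $L^- f \in L^2(\R)$, so $\la L^+ v_\omega , f \ra$ should be read as its complex conjugate. Second, no such difficulty arises for the discrete term, since $\sech \in L^2(\R)$ and $\la f , \sech \ra$ is an ordinary inner product.

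The main obstacle is justifying the interchange of $\la \cdot , f \ra$ with the $\omega$-integral (a Fubini step applied to $g_\omega = \la f , L^+ v_\omega \ra L^+ v_\omega / (1 + 4\pi^2 \omega^2)$) together with convergence of the resulting integral. In keeping with the sketch-level proof of Theorem~\ref{th:sechdecomp}, I would first verify the identity on a dense subclass of $f$ where all integrals are absolutely convergent --- for instance Schwartz functions, or $f$ with $\widehat{(L^- f)}$ compactly supported --- and then extend to arbitrary $f \in L^2(\R)$ by density, using that both sides depend continuously on $f$ in the $L^2$ norm: the right-hand side because $f \mapsto \widehat{(L^- f)}$ is bounded on $L^2$, the weight $1/(1+4\pi^2 \omega^2)$ is bounded, and $f \mapsto \la f , \sech \ra$ is bounded. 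Alternatively, one could polarize the sketched argument of Theorem~\ref{th:sechdecomp} to a bilinear identity in $(f,g)$ and read off the corollary as the case $g = f$, since the integrability needed for that is already implicit in the ``reverse the steps'' remark there.
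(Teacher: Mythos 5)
Your proposal matches the paper's proof exactly: the paper simply says ``take the inner product of $f$ with the formula in Theorem~\ref{th:sechdecomp},'' which is precisely your pairing-and-conjugate-symmetry argument. The additional care you take with the interchange of the $\omega$-integral and the inner product, and the density argument, goes beyond what the paper records (its own proof of Theorem~\ref{th:sechdecomp} is only a sketch), but it is the right way to make the one-line argument rigorous.
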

\begin{proof}
Take the inner product of $f$ with the formula in Theorem~\ref{th:sechdecomp}.
\end{proof}

\subsubsection*{Continuous spectrum $= [0,\infty)$}

Earlier we showed that the continuous spectrum contains $[0,\infty)$. For the reverse containment, suppose $\lam \notin [0,\infty)$ and $\lam \neq -1$. Then $L-\lam$ is invertible on $L^2$, with
\[
(L-\lam)^{-1} f = - \frac{1}{\lam+1} \frac{1}{2} \la f , \sech \ra \sech + \int_\R \frac{\la f , L^+ v_\omega \ra}{4\pi^2 \omega^2 - \lam} L^+ v_\omega \, \frac{d\omega}{1+4\pi^2 \omega^2}
\]
as one sees by applying $L-\lam$ to both sides and recalling Theorem~\ref{th:sechdecomp}. To check the boundedness of this inverse, note that
\begin{align*}
\lv (L-\lam)^{-1} f \rv_{L^2}^2
& = \frac{1}{|\lam+1|^2} \frac{1}{2} |\la f , \sech \ra|^2 + \int_\R \frac{| \la f , L^+ v_\omega \ra |^2}{|4\pi^2 \omega^2 - \lam|^2} \, \frac{d\omega}{1+4\pi^2 \omega^2} \\
& \leq \frac{1}{|\lam+1|^2} \frac{1}{2} |\la f , \sech \ra|^2 + \frac{1}{\text{dist} \, \big(\lam,[0,\infty) \big)^2} \int_\R | \la f , L^+ v_\omega \ra |^2 \, \frac{d\omega}{1+4\pi^2 \omega^2} \\
& \leq (\text{const.}) \lv f \rv_{L^2}^2 ,
\end{align*}
where we used Corollary~\ref{co:sechplancherel}.

The boundedness of $(L-\lam)^{-1}$ implies that the Weyl conditions (W1) and (W2) cannot both hold. Thus no Weyl sequence can exist for $\lam$, so that $\lam$ does not belong to the continuous spectrum.

Next suppose $\lam = -1$. If a Weyl sequence $w_n$ exists, then
\[
\la w_n , \sech \ra_{L^2} \to 0 \qquad \text{as $n \to \infty$,}
\]
by the weak convergence in (W3). Hence if we project away from the $\lam=-1$ eigenspace by defining
\[
y_n = w_n - \frac{1}{2} \la w_n , \sech \ra_{L^2} \sech \qquad \text{and} \qquad z_n = y_n/\lv y_n \rv_{L^2} ,
\]
then we find $\lv y_n \rv_{L^2} \to 1$ and $\lv z_n \rv_{L^2} = 1$, with $\la z_n , \sech \ra_{L^2} = 0$. Also
\[
(L+1)z_n = (L+1)y_n/\lv y_n \rv_{L^2} = (L+1)w_n/\lv y_n \rv_{L^2} \to 0
\]
in $L^2$. Thus $z_n$ satisfies (W1) and (W2) and lies in the orthogonal complement of the eigenspace spanned by $\sech$. A contradiction now follows from the boundedness of $(L+1)^{-1}$ on that orthogonal complement (with the boundedness being proved by the same argument as above for $\lam \neq -1$). This contradiction shows that no such Weyl sequence $w_n$ can exist, and so $-1$ does not belong to the continuous spectrum.

\medskip \emph{Note.} The parallels with our derivation of the continuous spectrum for the Laplacian in Chapter~\ref{ch:freeSchr} are instructive.

\chapter[Selfadjoint operators]{Selfadjoint, unbounded linear operators}

\label{ch:selfadjoint}

\subsubsection*{Goal} To develop the theory of unbounded linear operators on a Hilbert space, and to define selfadjointness for such operators.

\paragraph*{References} \cite{GS} Sections 1.5, 2.4

\cite{HS} Chapters 4, 5

\subsubsection*{Motivation}
Now we should develop some general theory, to provide context for the examples computed in Chapters~\ref{ch:freeSchr} and \ref{ch:sechsquared}.

We begin with a basic principle of calculus:
\begin{quote}
integration makes functions better, while differentiation makes them worse.
\end{quote}
More precisely, integral operators are bounded (generally speaking), while differential operators are unbounded. For example, $e^{2\pi i inx}$ has norm $1$ in $L^2[0,1]$ while its derivative $\tfrac{d\ }{dx} e^{2\pi i inx}=2\pi in e^{2\pi inx}$ has norm that grows with $n$. The unboundedness of such operators prevents us from applying the spectral theory of bounded operators on a Hilbert space.

Further, differential operators are usually defined only on a (dense) subspace of our natural function spaces. In particular, we saw in our study of discrete spectra that the Laplacian is most naturally studied using the Sobolev space $H^1$, even though the Laplacian involves two derivatives and $H^1$-functions are guaranteed only to possess a single derivative.

To meet these challenges, we will develop the theory of densely defined, unbounded linear operators, along with the notion of adjoints and selfadjointness for such operators.

\subsubsection*{Domains and inverses of (unbounded) operators}

Take a complex Hilbert space $\mathcal{H}$ with inner product $\la \cdot , \cdot \ra$. Suppose $A$ is a linear operator (not necessarily bounded) from a subspace $D(A) \subset \mathcal{H}$ into $\mathcal{H}$:
\[
A : D(A) \to \mathcal{H} .
\]
Call $D(A)$ the \textbf{domain} of $A$.

An operator $B$ with domain $D(B)$ is called the \textbf{inverse} of $A$ if
\begin{itemize}
\item $D(B) = \Ran(A), D(A)=\Ran(B)$, and
\item $BA=\text{id}_{\Ran(B)}, AB=\text{id}_{\Ran(A)}$.
\end{itemize}
Write $A^{-1}$ for this inverse, if it exists. Obviously $A^{-1}$ is unique, if it exists, because in that case $A$ is bijective.

Further say $A$ is \textbf{invertible} if $A^{-1}$ exists and is bounded on $\mathcal{H}$ (meaning that $A^{-1}$ exists, $\Ran(A)=\mathcal{H}$, and $A^{-1}:\mathcal{H} \to \mathcal{H}$ is a bounded linear operator).

\medskip
\noindent \emph{Example.} Consider the operator $A=-\Delta+1$ with domain $H^2(\Rd) \subset L^2(\Rd)$. Invertibility is proved using the Fourier transform: let $D(B)=L^2(\Rd)$, and define a bounded operator $B : L^2 \to L^2$ by
\[
\widehat{Bf}(\xi) = (1+4\pi^2 |\xi|^2)^{-1} \widehat{f}(\xi) .
\]
One can check that $\Ran(B) = H^2(\Rd) = D(A)$. Notice $BA=\text{id}_{H^2}, AB=\text{id}_{L^2}$. The second identity implies that $\Ran(A)=L^2=D(B)$.

\subsubsection*{Adjoint of an (unbounded) operator}

Call $A$ \textbf{symmetric} if
\begin{equation} \label{eq:symmetric}
\la Af,g \ra = \la f,Ag \ra , \qquad \forall f,g \in D(A) .
\end{equation}
Symmetry is a simpler concept than \textbf{selfadjointness}, which requires the operator and its adjoint to have the same domain, as we now explain.

First we define a subspace
\[
D(A^*) = \{ f \in \mathcal{H} : \text{the linear functional $g \mapsto \la f,Ag \ra$ is bounded on $D(A)$} \} .
\]
Assume from now on that $A$ is \textbf{densely defined}, meaning $D(A)$ is dense in $\mathcal{H}$. Then for each $f \in D(A^*)$, the bounded linear functional $g \mapsto \la f,Ag \ra$ is defined on a dense subspace of $\mathcal{H}$ and hence extends uniquely to a bounded linear functional on all of $\mathcal{H}$. By the Riesz Representation Theorem, that linear functional can be represented as the inner product of $g$ against a unique element of $\mathcal{H}$, which we call $A^* f$. Hence
\begin{equation} \label{eq:selfadjoint}
 \la f,Ag \ra = \la A^* f, g \ra , \qquad \forall f \in D(A^*), \quad g \in D(A)  .
\end{equation}
Clearly this operator $A^* : D(A^*) \to \mathcal{H}$ is linear. We call it the \textbf{adjoint} of $A$.

\begin{lemma}
If $A$ is a densely defined linear operator and $\lam \in \C$, then $(A-\lam)^*=A^* - \overline{\lam}$.
\end{lemma}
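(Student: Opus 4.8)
The plan is to verify the defining relation of the adjoint directly, by comparing the domains and the actions of $(A-\lam)^*$ and $A^* - \overline{\lam}$. The key observation is that for any $f,g$ in the appropriate domains, the sesquilinear identity
\[
\la f, (A-\lam) g \ra = \la f, Ag \ra - \overline{\lam} \, \la f, g \ra
\]
holds automatically, by linearity of the inner product in the second slot and conjugate-linearity in the first. So the boundedness (in $g \in D(A)$) of the functional $g \mapsto \la f, (A-\lam)g \ra$ is equivalent to the boundedness of $g \mapsto \la f, Ag \ra$, because the difference of the two functionals is the bounded functional $g \mapsto \overline{\lam}\,\la f,g \ra$ (bounded by Cauchy--Schwarz). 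This immediately gives $D\big((A-\lam)^*\big) = D(A^*)$.

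Next I would identify the action. First note $A - \lam$ is densely defined with the same domain $D(A-\lam) = D(A)$, so its adjoint is defined, and the domain computation above shows $D(A-\lam) $ is dense so everything makes sense. For $f \in D(A^*) = D\big((A-\lam)^*\big)$ and $g \in D(A)$, using \eqref{eq:selfadjoint} for $A$ and the displayed identity,
\[
\la (A-\lam)^* f, g \ra = \la f, (A-\lam) g \ra = \la A^* f, g \ra - \overline{\lam} \la f, g \ra = \la (A^* - \overline{\lam}) f, g \ra .
\]
Since this holds for all $g$ in the dense subspace $D(A)$, we conclude $(A-\lam)^* f = (A^* - \overline{\lam}) f$, and since the domains agree, $(A-\lam)^* = A^* - \overline{\lam}$ as operators.

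I do not anticipate a genuine obstacle here; the statement is essentially a bookkeeping exercise once one is careful about domains. The one point requiring a small amount of attention is confirming that $A - \lam$ is itself densely defined (so that the symbol $(A-\lam)^*$ is legitimate) --- but this is immediate since $D(A-\lam) = D(A)$ and $A$ is assumed densely defined. A second minor point is to state explicitly that the Riesz representation element is unique, so that the identification $(A-\lam)^*f = (A^* - \overline{\lam})f$ is forced rather than merely consistent; this follows from density of $D(A)$ exactly as in the construction of $A^*$ preceding the lemma.
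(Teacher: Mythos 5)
Your proof is correct and is exactly the argument the paper intends: the text leaves the proof to the reader, remarking only that $D(A-\lam)=D(A)$ and $D\big((A-\lam)^*\big)=D(A^*)$, which is precisely what you verify before matching the actions via density of $D(A)$. One small slip: the paper's inner product is linear in the \emph{first} slot and conjugate-linear in the \emph{second} (e.g.\ $\la u,v \ra_{L^2}=\int u\overline{v}\,dx$), so your displayed identities $\la f,(A-\lam)g\ra = \la f,Ag\ra - \overline{\lam}\la f,g\ra$ and $\overline{\lam}\la f,g\ra = \la \overline{\lam}f,g\ra$ are right, but your parenthetical attribution of linearity and conjugate-linearity to the two slots is reversed.
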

We leave the (easy) proof to the reader. Implicit in the proof is that domains are unchanged by subtracting a constant: $D(A-\lam)=D(A)$ and $D\big(( (A-\lam)^* \big) = D(A^*)$.

The kernel of the adjoint complements the range of the original operator, as follows.
\begin{proposition} \label{pr:complement}
If $A$ is a densely defined linear operator then $\overline{\Ran(A)} \oplus \ker(A^*) = \mathcal{H}$.
\end{proposition}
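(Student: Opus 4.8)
The plan is to reduce the claim to two ingredients: the standard orthogonal decomposition of a Hilbert space relative to an arbitrary subspace, and the identity $\Ran(A)^\perp = \ker(A^*)$. First I would recall the elementary Hilbert space fact (assumed in the prerequisites) that for any linear subspace $M \subseteq \mathcal{H}$ one has $\mathcal{H} = \overline{M} \oplus M^\perp$. Applying this with $M = \Ran(A)$ yields $\mathcal{H} = \overline{\Ran(A)} \oplus \Ran(A)^\perp$, so the proposition follows once we show $\Ran(A)^\perp = \ker(A^*)$.

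For the inclusion $\ker(A^*) \subseteq \Ran(A)^\perp$: if $f \in \ker(A^*)$, then in particular $f \in D(A^*)$, and for every $g \in D(A)$ the defining relation \eqref{eq:selfadjoint} gives $\la f, Ag \ra = \la A^*f, g \ra = \la 0, g \ra = 0$; hence $f$ is orthogonal to every element of $\Ran(A)$, so $f \in \Ran(A)^\perp$.

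For the reverse inclusion $\Ran(A)^\perp \subseteq \ker(A^*)$: suppose $f \perp \Ran(A)$, so that $\la f, Ag \ra = 0$ for all $g \in D(A)$. Then the linear functional $g \mapsto \la f, Ag \ra$ on $D(A)$ is identically zero and therefore bounded, so $f \in D(A^*)$ by the very definition of $D(A^*)$. Moreover $\la A^*f, g \ra = \la f, Ag \ra = 0$ for all $g \in D(A)$, and since $A$ is densely defined this forces $A^*f = 0$, i.e.\ $f \in \ker(A^*)$.

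The only point requiring care — the ``main obstacle,'' such as it is — is the bookkeeping with the domain of the adjoint: one must verify that membership in $\Ran(A)^\perp$ actually places $f$ in $D(A^*)$ (which is exactly where the boundedness of the zero functional is invoked), and that density of $D(A)$ is genuinely needed to pass from ``$\la A^*f, g \ra = 0$ for all $g$ in a dense subspace'' to ``$A^*f = 0$.'' No property of $A$ beyond density of its domain is used; in particular its possible unboundedness plays no role.
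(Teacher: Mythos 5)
Your proof is correct and follows essentially the same route as the paper's: both establish $\Ran(A)^\perp = \ker(A^*)$ via the same two inclusions (noting that orthogonality to $\Ran(A)$ makes the functional $g \mapsto \la f, Ag\ra$ trivially bounded, hence $f \in D(A^*)$, and using density of $D(A)$ to conclude $A^*f = 0$) and then invoke the standard orthogonal decomposition of $\mathcal{H}$ relative to $\overline{\Ran(A)}$. Your explicit attention to the domain bookkeeping is exactly the right point to emphasize.
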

\begin{proof}
Clearly $\ker(A^*) \subset \Ran(A)^\perp$, because if $f \in \ker(A^*)$ then $A^*f = 0$ and so for all $g \in D(A)$ we have
\[
\la f,Ag \ra = \la A^* f,g \ra = 0 .
\]

To prove the reverse inclusion, $\Ran(A)^\perp \subset \ker(A^*)$, suppose $h \in \Ran(A)^\perp$. For all $g \in D(A)$ we have $\la h,Ag \ra = 0$. In particular, $h \in D(A^*)$. Hence
\[
\la A^* h,g \ra = \la h,Ag \ra = 0 \qquad \forall g \in D(A) ,
\]
and so from density of $D(A)$ we conclude $A^* h = 0$. That is, $h \in \ker(A^*)$.

We have shown $\Ran(A)^\perp = \ker(A^*)$, and so (since the orthogonal complement is unaffected by taking the closure) $\overline{\Ran(A)}^\perp = \ker(A^*)$. The proposition follows immediately.
\end{proof}

We will need later that the graph of the adjoint, $\{ (f,A^* f) : f \in D(A^*) \}$, is closed in $\mathcal{H} \times \mathcal{H}$.
\begin{theorem} \label{pr:closedadjoint}
If $A$ is a densely defined linear operator then $A^*$ is a closed operator.
\end{theorem}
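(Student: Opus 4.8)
The plan is to unwind the definition of "closed operator": I must show that the graph $\{(f, A^*f) : f \in D(A^*)\}$ is closed in $\mathcal{H} \times \mathcal{H}$. So I would take a sequence $f_n \in D(A^*)$ with $f_n \to f$ and $A^* f_n \to h$ in $\mathcal{H}$, and aim to prove that $f \in D(A^*)$ and $A^* f = h$.

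The key step is to test against an arbitrary $g \in D(A)$ and pass to the limit inside the inner products, using continuity of $\langle \cdot, \cdot \rangle$ together with the defining identity \eqref{eq:selfadjoint} for $A^*$:
\[
\langle f, Ag \rangle = \lim_{n \to \infty} \langle f_n, Ag \rangle = \lim_{n \to \infty} \langle A^* f_n, g \rangle = \langle h, g \rangle .
\]
This shows the linear functional $g \mapsto \langle f, Ag \rangle$ agrees on $D(A)$ with $g \mapsto \langle h, g \rangle$, which is bounded; hence $f \in D(A^*)$ by the definition of the domain of the adjoint. Then $\langle A^* f, g \rangle = \langle f, Ag \rangle = \langle h, g \rangle$ for all $g \in D(A)$, and since $D(A)$ is dense in $\mathcal{H}$ (this is where we use that $A$ is densely defined), we conclude $A^* f = h$. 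Thus $(f,h)$ lies in the graph of $A^*$, so the graph is closed.

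There is no serious obstacle here: the only point that deserves care is remembering that "$f \in D(A^*)$" is not automatic and must be verified from the boundedness of the functional $g \mapsto \langle f, Ag\rangle$, which is exactly what the displayed computation delivers (the functional equals $\langle h, \cdot\rangle$). Density of $D(A)$ is used only at the end, to make $A^*f$ well-defined and force $A^*f = h$.
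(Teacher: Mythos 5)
Your proof is correct and is essentially identical to the paper's own argument: both pass to the limit in $\langle f_n, Ag\rangle = \langle A^* f_n, g\rangle$, observe that the resulting functional $g \mapsto \langle f, Ag\rangle = \langle h, g\rangle$ is bounded so that $f \in D(A^*)$, and then use density of $D(A)$ to conclude $A^* f = h$. No changes needed.
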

\begin{proof}
Suppose $f_n \in D(A^*)$ with $f_n \to f ,A^* f_n \to g$, for some $f,g \in \mathcal{H}$. To prove the graph of $A^*$ is closed, we must show $f \in D(A^*)$ with $A^* f=g$.

For each $h \in D(A)$ we have
\[
\la f,Ah \ra = \lim_n \la f_n, Ah\ra = \lim_n \la A^* f_n, h \ra = \la g,h \ra .
\]
Thus the map $h \mapsto \la f , Ah \ra$ is bounded for $h \in D(A)$. Hence $f \in D(A^*)$, and using the last calculation we see
\[
\la A^* f, h \ra = \la f,Ah \ra = \la g,h \ra
\]
for all $h \in D(A)$. Density of the domain implies $A^* f = g$, as we wanted.
\end{proof}

\subsubsection*{Selfadjointness}

Call $A$ \textbf{selfadjoint} if $A^*=A$, meaning $D(A^*)=D(A)$ and $A^*=A$ on their common domain.

Selfadjoint operators have closed graphs, due to closedness of the adjoint in Theorem~\ref{pr:closedadjoint}. Thus:
\begin{proposition} \label{pr:closedself}
If a densely defined linear operator $A$ is selfadjoint then it is closed.
\end{proposition}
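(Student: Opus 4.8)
The statement to prove is Proposition~\ref{pr:closedself}: a densely defined selfadjoint operator $A$ is closed. The plan is to derive this immediately from Theorem~\ref{pr:closedadjoint}, which asserts that the adjoint $A^*$ of any densely defined operator is closed. Since selfadjointness means precisely that $A = A^*$ (same domain, same action), the operator $A$ \emph{is} its own adjoint, and hence inherits closedness from $A^*$.

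Concretely, I would argue as follows. Let $A$ be densely defined and selfadjoint, so $D(A) = D(A^*)$ and $Af = A^*f$ for all $f$ in this common domain. Suppose $f_n \in D(A)$ with $f_n \to f$ and $Af_n \to g$ in $\mathcal{H}$; we must show $f \in D(A)$ and $Af = g$. Rewriting in terms of the adjoint, we have $f_n \in D(A^*)$ with $f_n \to f$ and $A^*f_n \to g$. By Theorem~\ref{pr:closedadjoint}, the graph of $A^*$ is closed, so $f \in D(A^*)$ and $A^*f = g$. Using selfadjointness once more, $f \in D(A)$ and $Af = A^*f = g$, which is exactly what closedness of $A$ requires.

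There is essentially no obstacle here; the content is entirely contained in Theorem~\ref{pr:closedadjoint}, and this proposition is a one-line corollary. The only thing to be mildly careful about is keeping straight the logical direction — one uses that selfadjointness is a \emph{stronger} hypothesis than ``$A$ equals some closed operator,'' so that the closedness of the adjoint (which always holds) transfers to $A$ itself. No density argument or Riesz representation is needed beyond what already went into proving Theorem~\ref{pr:closedadjoint}.
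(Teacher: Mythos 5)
Your proposal is correct and is exactly the paper's argument: the text derives Proposition~\ref{pr:closedself} in one line from Theorem~\ref{pr:closedadjoint}, noting that a selfadjoint operator equals its own adjoint and therefore has a closed graph. Your spelled-out verification of the graph condition is just an unpacking of that same step.
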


The relation between selftadjointness and symmetry is clear:
\begin{proposition}
The densely defined linear operator $A$ is selfadjoint if and only if it is symmetric and $D(A)=D(A^*)$.
\end{proposition}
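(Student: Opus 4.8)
The plan is to unwind both sides into the defining relation \eqref{eq:selfadjoint} of the adjoint, using density of $D(A)$ in $\mathcal{H}$ exactly once. Recall that $A^{*}=A$ means, by definition, the two conditions $D(A^{*})=D(A)$ and $A^{*}f=Af$ for all $f$ in the common domain. So the domain equality is built into selfadjointness, and the real content is the interplay between the pointwise identity $A^{*}=A$ and the symmetry relation \eqref{eq:symmetric}.

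For the forward implication, assume $A$ is selfadjoint. Then $D(A^{*})=D(A)$ is immediate. For symmetry, fix $f,g\in D(A)=D(A^{*})$ and apply \eqref{eq:selfadjoint}: since $f\in D(A^{*})$ and $g\in D(A)$ we get $\la f,Ag\ra=\la A^{*}f,g\ra=\la Af,g\ra$, which is \eqref{eq:symmetric}.

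For the reverse implication, assume $A$ is symmetric with $D(A)=D(A^{*})$. It remains only to check that $A^{*}f=Af$ for every $f\in D(A)=D(A^{*})$. Fix such an $f$. Symmetry \eqref{eq:symmetric} gives $\la f,Ag\ra=\la Af,g\ra$ for all $g\in D(A)$, while the adjoint relation \eqref{eq:selfadjoint} gives $\la f,Ag\ra=\la A^{*}f,g\ra$ for all $g\in D(A)$. Subtracting, $\la A^{*}f-Af,g\ra=0$ for every $g$ in the dense subspace $D(A)$, hence $A^{*}f=Af$. (It is worth remarking that symmetry alone already forces $D(A)\subseteq D(A^{*})$: for $f\in D(A)$ the functional $g\mapsto\la f,Ag\ra=\la Af,g\ra$ is bounded on $D(A)$, so $f\in D(A^{*})$; the extra hypothesis $D(A)=D(A^{*})$ then upgrades this inclusion to selfadjointness. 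One could phrase the whole proof this way if preferred.)

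The argument is entirely formal, so there is no genuine obstacle; the only thing to be careful about is bookkeeping of which domain each vector lies in, and invoking density of $D(A)$ precisely at the step where we conclude that the two representing vectors $Af$ and $A^{*}f$ coincide.
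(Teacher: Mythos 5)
Your proof is correct and follows essentially the same route as the paper: the forward direction reads symmetry directly off the adjoint relation \eqref{eq:selfadjoint}, and the reverse direction combines \eqref{eq:symmetric} with \eqref{eq:selfadjoint} and uses density of $D(A)$ to conclude $A^{*}f=Af$. Your parenthetical observation that symmetry alone already gives $D(A)\subseteq D(A^{*})$ is a correct and worthwhile addition, though not needed for the statement as posed.
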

\begin{proof}
``$\Longrightarrow$''
If $A^*=A$ then the adjoint relation \eqref{eq:selfadjoint} reduces immediately to the symmetry relation \eqref{eq:symmetric}.

\medskip
``$\Longleftarrow$''  The symmetry relation \eqref{eq:symmetric} together with the adjoint relation \eqref{eq:selfadjoint} implies that $\la Af, g \ra = \la A^* f, g \ra$ for all $f,g \in D(A)=D(A^*)$. Since $D(A)$ is dense in $\mathcal{H}$, we conclude $Af=A^* f$.
\end{proof}
For bounded operators, selfadjointness and symmetry are equivalent.
\begin{lemma}
If a linear operator $A$ is bounded on $\mathcal{H}$, then it is selfadjoint if and only if it is symmetric.
\end{lemma}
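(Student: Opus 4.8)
The plan is to reduce this lemma to the characterization of selfadjointness proved just above: a densely defined operator is selfadjoint precisely when it is symmetric and $D(A)=D(A^*)$. Since the implication ``selfadjoint $\Longrightarrow$ symmetric'' already holds for any densely defined operator (indeed it is immediate from the adjoint relation \eqref{eq:selfadjoint} reducing to \eqref{eq:symmetric}), the only real content is the reverse implication, and there the sole point to verify is that the two domains coincide.

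First I would unwind the convention that ``bounded on $\mathcal{H}$'' means $D(A)=\mathcal{H}$ with $\lv Af \rv \leq (\text{const.}) \lv f \rv$ for all $f \in \mathcal{H}$; in particular $A$ is densely defined, so the adjoint $A^*$ is well defined. Then, assuming $A$ is symmetric, I would check that $D(A^*)=\mathcal{H}$: given any $f \in \mathcal{H}$, the linear functional $g \mapsto \la f , Ag \ra$ satisfies $|\la f , Ag \ra| \leq \lv f \rv \, \lv Ag \rv \leq (\text{const.}) \lv f \rv \, \lv g \rv$ by Cauchy--Schwarz together with boundedness of $A$, hence is bounded on $D(A)=\mathcal{H}$, so $f \in D(A^*)$. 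Therefore $D(A^*)=\mathcal{H}=D(A)$, and the preceding proposition (symmetric plus equal domains implies selfadjoint) finishes the argument.

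I do not expect any genuine obstacle here; the one subtlety worth flagging explicitly is precisely the domain convention — that a bounded operator on $\mathcal{H}$ is taken to be defined on the whole space rather than merely on a dense subspace — since it is exactly this that forces $D(A^*)=\mathcal{H}$ and makes the domains automatically agree, which is the step that fails for unbounded symmetric operators.
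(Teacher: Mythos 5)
Your proof is correct and follows essentially the same route as the paper: both arguments hinge on the observation that boundedness forces $D(A^*)=\mathcal{H}=D(A)$, after which selfadjointness and symmetry coincide by the preceding proposition. You simply spell out the Cauchy--Schwarz estimate that the paper leaves implicit.
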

\begin{proof}
Boundedness of $A$ ensures that $D(A^*)=\mathcal{H}=D(A)$, and so the adjoint relation \eqref{eq:selfadjoint} holds for all $f,g \in \mathcal{H}$. Thus $A^*=A$ is equivalent to symmetry.
\end{proof}

\subsubsection*{Example: selfadjointness for Schr\"{o}dinger operators}

Let $L=-\Delta+V$ be a Schr\"{o}dinger operator with potential $V(x)$ that is bounded and real-valued. Choose the domain to be $D(L)=H^2(\Rd)$ in the Hilbert space $L^2(\Rd)$. This Schr\"{o}dinger operator is selfadjoint.
\begin{proof}
Density of $D(L)$ follows from density in $L^2$ of the smooth functions with compact support.

Our main task is to determine the domain of $L^*$. Fix $f,g \in H^2(\Rd)$. From the integration by parts formula $\la f,\Delta g \ra_{L^2}=\la \Delta f,g \ra_{L^2}$ (which one may alternatively prove with the help of the Fourier transform), one deduces that
\[
| \la f, \Delta g \ra_{L^2} | = | \la \Delta f, g \ra_{L^2} | \leq \lv f \rv_{H^2} \lv g \rv_{L^2} .
\]
Also $| \la f, Vg \ra_{L^2} | \leq \lv f \rv_{L^2} \lv V \rv_{L^\infty} \lv g \rv_{L^2}$.
Hence the linear functional $g \mapsto \la f , Lg \ra_{L^2}$ is bounded on $g \in D(L)$. Therefore $f \in D(L^*)$, which tells us $H^2(\Rd) \subset D(L^*)$.

To prove the reverse inclusion, fix $f \in D(L^*)$. Then
\[
| \la f , Lg \ra_{L^2} | \leq (\text{const.}) \lv g \rv_{L^2} , \qquad \forall g \in D(L) = H^2(\Rd) .
\]
Since the potential $V$ is bounded, the last formula still holds if we replace $V$ with $1$, so that
\[
| \la f , (-\Delta+1) g \ra_{L^2} | \leq (\text{const.}) \lv g \rv_{L^2} , \qquad \forall g \in H^2(\Rd) .
\]
Taking Fourier transforms gives
\[
| \la \widehat{f} , (1+4\pi^2|\xi|^2) \widehat{g} \ra_{L^2} | \leq (\text{const.}) \lv \widehat{g} \rv_{L^2} , \qquad \forall g \in H^2(\Rd) .
\]
In particular, we may suppose $\widehat{g} = h \in C^\infty_0(\Rd)$, since every such $\widehat{g}$ gives $g \in H^2(\Rd)$. Hence
\[
| \la (1+4\pi^2|\xi|^2) \widehat{f} , h \ra_{L^2} | \leq (\text{const.}) \lv h \rv_{L^2} , \qquad \forall h \in C^\infty_0(\Rd) .
\]
Taking the supremum of the left side over all $h$ with $L^2$-norm equal to $1$ shows that
\[
\lv(1+4\pi^2|\xi|^2) \widehat{f} \rv_{L^2} \leq (\text{const.})
\]
Hence $(1+|\xi|)^2 \, \widehat{f} \in L^2(\Rd)$, which means $f \in H^2(\Rd)$. Thus $D(L^*) \subset H^2(\Rd)$.

Now that we know the domains of $L$ and $L^*$ agree, we have only to check symmetry, and that is straightforward. When $f,g \in H^2(\Rd)$ we have
\begin{align*}
\la Lf,g \ra
& = - \la \Delta f,g \ra_{L^2} + \la Vf,g \ra_{L^2} \\
& = - \la f, \Delta g \ra_{L^2} + \la f,Vg \ra_{L^2} \\
& = \la f , Lg \ra_{L^2}
\end{align*}
where we integrated by parts and used that $V(x)$ is real-valued.
\end{proof}

\chapter{Spectra: discrete and continuous}

\label{ch:spectrum}

\subsubsection*{Goal} To develop the spectral theory of selfadjoint unbounded linear operators.

\paragraph*{References} \cite{GS} Sections 2.4, 5.1

\cite{HS} Chapters 1, 5, 7

\cite{Ru} Chapter 13

\subsection*{Resolvent set, and spectrum}

Let $A$ be a densely defined linear operator on a complex Hilbert space $\mathcal{H}$, as in the preceding chapter. The operator $A-\lam$ has domain $D(A)$, for each constant $\lam \in \C$. Define the \textbf{resolvent set}
\[
\res(A) = \{ \lam \in \C : \text{$A-\lam$ is invertible (has a bounded inverse defined on $\mathcal{H}$)} \} .
\]
For $\lam$ in the resolvent set, we call the inverse $(A-\lam)^{-1}$ the \textbf{resolvent operator}.

The \textbf{spectrum} is defined as the complement of the resolvent set:
\[
\spec(A) = \C \setminus \res(A).
\]
For example, if $\lam$ is an eigenvalue of $A$ then $\lam \in \spec(A)$, because if $Af=\lam f$ for some $f \neq 0$, then $(A-\lam)f=0$ and so $A-\lam$ is not injective, and hence is not invertible.

\begin{proposition}[\protect{\cite[Theorem 1.2]{HS}}]
The resolvent set is open, and hence the spectrum is closed.
\end{proposition}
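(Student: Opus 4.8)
The plan is to show that $\res(A)$ is open by proving that if $\lambda_0 \in \res(A)$, then every $\lambda$ sufficiently close to $\lambda_0$ also lies in $\res(A)$. The natural tool is a Neumann series (geometric series) argument for the resolvent, exactly as in the bounded-operator case, but carried out carefully because $A$ is unbounded. First I would fix $\lambda_0 \in \res(A)$ and write $R_0 = (A-\lambda_0)^{-1}$, which by definition is a bounded operator on all of $\mathcal{H}$. For a nearby $\lambda$ I would use the algebraic identity
\[
A - \lambda = (A - \lambda_0) - (\lambda - \lambda_0) = (A-\lambda_0)\bigl( I - (\lambda-\lambda_0) R_0 \bigr),
\]
valid as operators on $D(A)$, since $R_0$ maps $\mathcal{H}$ into $D(A)$ and $A-\lambda_0$ is the inverse of $R_0$ there.

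Next I would observe that when $|\lambda - \lambda_0| < 1/\lVert R_0 \rVert$, the operator $I - (\lambda-\lambda_0)R_0$ is invertible on $\mathcal{H}$ with bounded inverse given by the convergent Neumann series $\sum_{n=0}^\infty (\lambda-\lambda_0)^n R_0^n$, whose norm is at most $\bigl(1 - |\lambda-\lambda_0|\,\lVert R_0\rVert\bigr)^{-1}$. Call this bounded inverse $S$. Then I claim $(A-\lambda)^{-1} = S R_0$: composing on the right, $(A-\lambda)(S R_0) = (A-\lambda_0)(I - (\lambda-\lambda_0)R_0) S R_0 = (A-\lambda_0) R_0 = I$ on $\mathcal{H}$; composing on the left (on $D(A)$), $(S R_0)(A-\lambda) = S (I - (\lambda-\lambda_0)R_0)^{-1}\cdot(I-(\lambda-\lambda_0)R_0) \cdots$ — more cleanly, $S R_0 (A-\lambda) = S\,(I-(\lambda-\lambda_0)R_0)\,R_0(A-\lambda_0)\cdots$; I would simply verify that $SR_0$ is a two-sided inverse by using the factorization above and the fact that $R_0$ and $A-\lambda_0$ are mutually inverse. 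Since $S R_0$ is a product of bounded operators, it is bounded and defined on all of $\mathcal{H}$, so $\lambda \in \res(A)$. This shows $\res(A)$ contains the open ball of radius $1/\lVert R_0\rVert$ about $\lambda_0$, hence $\res(A)$ is open and $\spec(A) = \C \setminus \res(A)$ is closed.

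The main obstacle is bookkeeping with domains: one must check that $SR_0$ genuinely maps into $D(A)$ and that the compositions $(A-\lambda)(SR_0)$ and $(SR_0)(A-\lambda)$ make sense and equal the identities on $\mathcal{H}$ and on $D(A)$ respectively, since $A-\lambda$ is only defined on $D(A)$. The point is that $R_0$ already has range $D(A)=D(A-\lambda)$, and $S$ is a bounded operator on $\mathcal{H}$ built from $R_0$, so $SR_0$ also has range in $D(A)$; the bounded factor $S$ commutes with $R_0$ (both being functions of $R_0$), which is what makes the two-sided inverse identities go through. Apart from this, the estimate on the Neumann series is the routine geometric-series bound and presents no difficulty.
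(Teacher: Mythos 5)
Your proof is correct, and it is the standard Neumann-series argument; the paper itself omits the proof entirely, deferring to \cite[Theorem 1.2]{HS}, so there is no in-text proof to compare against, but your argument is essentially the textbook one. The domain bookkeeping you flag is handled correctly: the factorization $A-\lambda=(A-\lambda_0)(I-(\lambda-\lambda_0)R_0)$ holds on $D(A)$ because $R_0$ maps $\mathcal{H}$ into $D(A)$, and since $S$ commutes with $R_0$ one has $SR_0=R_0S$, whose range is $\Ran(R_0)=D(A)$, so both composition identities make sense and $\lambda\in\res(A)$ whenever $|\lambda-\lambda_0|<1/\lVert R_0\rVert$.
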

We omit the proof.

The next result generalizes the fact that Hermitian matrices have only real eigenvalues.
\begin{theorem} \label{th:reality}
If $A$ is selfadjoint then its spectrum is real: $\spec(A) \subset \R$.
\end{theorem}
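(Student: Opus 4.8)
\emph{Proof plan.} The plan is to show that every $\lambda \in \C \setminus \R$ belongs to the resolvent set $\res(A)$, which gives $\spec(A) \subset \R$. Write $\lambda = \sigma + i\tau$ with $\sigma,\tau \in \R$ and $\tau \neq 0$. The whole argument rests on the lower bound
\[
\lv (A-\lambda)f \rv^2 = \lv (A-\sigma)f \rv^2 + \tau^2 \lv f \rv^2 \geq \tau^2 \lv f \rv^2 , \qquad \forall f \in D(A) ,
\]
which I would obtain by expanding $\lv (A-\sigma)f - i\tau f \rv^2$ and observing that the two cross terms combine into a multiple of $\Im \la (A-\sigma)f , f \ra$, which vanishes because $A-\sigma$ is symmetric (so $\la (A-\sigma)f , f \ra \in \R$). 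This estimate immediately shows $A-\lambda$ is injective, and that any inverse we eventually produce satisfies $\lv (A-\lambda)^{-1} \rv \leq |\tau|^{-1}$.

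Next I would check that $\Ran(A-\lambda)$ is closed. If $(A-\lambda)f_n \to g$ in $\mathcal{H}$, then $\{f_n\}$ is Cauchy by the lower bound, so $f_n \to f$ for some $f \in \mathcal{H}$; then $Af_n = (A-\lambda)f_n + \lambda f_n \to g + \lambda f$, and since $A$ is selfadjoint hence closed (Proposition~\ref{pr:closedself}), we get $f \in D(A)$ with $(A-\lambda)f = g$. Then I would show $\Ran(A-\lambda)$ is dense. Applying Proposition~\ref{pr:complement} to the densely defined operator $A-\lambda$ gives $\overline{\Ran(A-\lambda)} \oplus \ker\big((A-\lambda)^*\big) = \mathcal{H}$. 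By the lemma preceding Proposition~\ref{pr:complement} together with selfadjointness, $(A-\lambda)^* = A^* - \overline{\lambda} = A - \overline{\lambda}$, and since $\overline{\lambda} = \sigma - i\tau$ again has nonzero imaginary part, the same lower bound (now for $A-\overline{\lambda}$) forces $\ker\big((A-\lambda)^*\big) = \{0\}$. Hence $\overline{\Ran(A-\lambda)} = \mathcal{H}$.

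Combining the two range facts, $A-\lambda$ maps $D(A)$ bijectively onto $\mathcal{H}$, and the inverse is bounded by $|\tau|^{-1}$; therefore $\lambda \in \res(A)$, completing the proof. I do not expect a genuine obstacle here: the only care needed is in assembling surjectivity of $A-\lambda$ from ``closed range plus dense range,'' which is exactly where closedness of $A$ (Proposition~\ref{pr:closedself}) and the identification of the adjoint of $A-\lambda$ (the lemma, feeding into Proposition~\ref{pr:complement}) both get used; the algebraic point that makes the key estimate work is the symmetry of $A-\sigma$.
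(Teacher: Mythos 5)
Your proposal is correct and follows essentially the same route as the paper's proof: the lower bound $\lv (A-\lambda)f\rv \geq |\Im\lambda|\,\lv f\rv$ via symmetry, dense range from Proposition~\ref{pr:complement} applied to $(A-\lambda)^* = A-\overline{\lambda}$, closed range from the Cauchy-sequence argument and closedness of the selfadjoint operator, and the resulting bound $|\Im\lambda|^{-1}$ on the inverse. The only (cosmetic) difference is that you obtain the key estimate as an exact Pythagorean identity for $\lv (A-\sigma)f - i\tau f\rv^2$, whereas the paper completes a square after Cauchy--Schwarz; both hinge on the same fact that $\la Af,f\ra$ is real.
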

\begin{proof}
We prove the contrapositive. Suppose $\lam \in \C$ has nonzero imaginary part, $\Im \lam \neq 0$. We will show $\lam \in \res(A)$.

The first step is to show $A-\lam$ is injective. For all $f \in D(A)$,
\[
\lv (A-\lam)f \rv^2 = \lv Af \rv^2 - 2 (\Re \lam) \la f,Af \ra + |\lam|^2 \lv f \rv^2
\]
and so
\begin{align}
\lv (A-\lam)f \rv^2
& \geq \lv Af \rv^2 - 2 |\Re \lam| \lv f \rv \lv Af \rv + |\lam|^2 \lv f \rv^2 \notag \\
& = \big( \lv Af \rv - |\Re \lam| \lv f \rv \big)^2 + |\Im \lam|^2 \lv f \rv^2 \notag \\
& \geq |\Im \lam|^2 \lv f \rv^2 . \label{eq:inverseest}
\end{align}
The last inequality implies that $A-\lam$ is injective, using here that $|\Im \lam| > 0$. That is, $\ker(A-\lam)=\{ 0 \}$.

Selfadjointness ($A^*=A$) now gives $\ker(A^* - \lam)=0$, and so $\overline{\Ran(A-\lam)}=\mathcal{H}$ by Proposition~\ref{pr:complement}. That is, $A-\lam$ has dense range.

Next we show $\Ran(A-\lam)=\mathcal{H}$. Let $g \in \mathcal{H}$. By density of the range, we may take a sequence $f_n \in D(A)$ such that $(A-\lam)f_n \to g$. The sequence $f_n$ is Cauchy, in view of \eqref{eq:inverseest}. Hence the sequence $(f_n,(A-\lam)f_n)$ is Cauchy in $\mathcal{H} \times \mathcal{H}$, and so converges to $(f,g)$ for some $f \in \mathcal{H}$. Note each ordered pair $(f_n,(A-\lam)f_n)$ lies in the graph of $A-\lam$, and this graph is closed by Proposition~\ref{pr:closedself} (relying here on selfadjointness  again). Therefore $(f,g)$ belongs to the graph of $A-\lam$, and so $g \in \Ran(A-\lam)$. Thus $A-\lam$ has full range.

To summarize: we have shown $A-\lam$ is injective and surjective, and so it has an inverse operator
\[
(A-\lam)^{-1} : \mathcal{H} \to D(A) \subset \mathcal{H} .
\]
This inverse is bounded with
\[
\lv (A-\lam)^{-1}g \rv \leq |\Im \lam|^{-1} \lv g \rv , \qquad \forall g \in \mathcal{H} ,
\]
by taking $f=(A-\lam)^{-1}g$ in estimate \eqref{eq:inverseest}. The proof is thus complete.
\end{proof}

\subsubsection*{Characterizing the spectrum}

We will characterize the spectrum in terms of approximate eigenfunctions. Given a number $\lam \in \C$ and a sequence $w_n \in D(A)$, consider three conditions:
 \begin{itemize}
    \item[(W1)] $\lv (A-\lam)w_n \rv_\mathcal{H} \to 0$ as $n \to \infty$,
    \item[(W2)] $\lv w_n \rv_\mathcal{H}=1$,
    \item[(W3)] $w_n \rightharpoonup 0$ weakly in $\mathcal{H}$ as $n \to \infty$.
 \end{itemize}
(We considered these conditions in Chapter~\ref{ch:freeSchr} for the special case of the Laplacian).

Condition (W1) says $w_n$ is an ``approximate eigenfunction'', and condition (W2) simply normalizes the sequence. These conditions characterize the spectrum, for a selfadjoint operator.
\begin{theorem} \label{th:basicchar}
If $A$ is selfadjoint then
\[
\spec(A) = \{  \lam \in \C : \text{(W1) and (W2) hold for some sequence $w_n \in D(A)$} \} .
\]
\end{theorem}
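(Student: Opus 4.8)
The plan is to prove the two set inclusions separately, after replacing the condition ``a sequence satisfying (W1) and (W2) exists'' by the equivalent analytic statement that $A-\lam$ fails a uniform lower bound $\lv (A-\lam)w \rv \ge c \lv w \rv$.

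\emph{Easy inclusion.} First I would check that if $\lam \in \res(A)$ then no sequence can satisfy (W1) and (W2). Writing $M$ for the norm of the bounded operator $(A-\lam)^{-1}$, any $w_n \in D(A)$ obeying (W2) satisfies $1 = \lv w_n \rv = \lv (A-\lam)^{-1}(A-\lam)w_n \rv \le M \lv (A-\lam)w_n \rv$, which contradicts (W1). Hence every $\lam$ for which some sequence satisfies (W1), (W2) lies in $\spec(A)$; this is the inclusion ``$\supseteq$''.

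\emph{Hard inclusion.} For ``$\subseteq$'' I would argue by contraposition. A sequence $w_n \in D(A)$ satisfying (W1) and (W2) exists precisely when $\inf\{\lv (A-\lam)w \rv : w \in D(A), \lv w \rv = 1\} = 0$, i.e.\ precisely when there is \emph{no} constant $c>0$ with $\lv (A-\lam)w \rv \ge c \lv w \rv$ for all $w \in D(A)$. So it suffices to prove: if such a $c$ exists, then $\lam \in \res(A)$. If $\lam \notin \R$ this is immediate from Theorem~\ref{th:reality}, so assume $\lam \in \R$. The lower bound forces $A-\lam$ to be injective with a bounded inverse (of norm $\le 1/c$) defined on $\Ran(A-\lam)$. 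Next I would show $\Ran(A-\lam)$ is closed: if $(A-\lam)w_n \to g$, the lower bound makes $\{w_n\}$ Cauchy, hence $w_n \to w$, and closedness of $A-\lam$ (it is selfadjoint, hence closed by Proposition~\ref{pr:closedself}) yields $w \in D(A)$ with $(A-\lam)w = g$. Finally, since $\lam$ is real we have $(A-\lam)^* = A - \lam$, so $\Ran(A-\lam) = \overline{\Ran(A-\lam)} = \ker(A-\lam)^\perp = \mathcal{H}$ by Proposition~\ref{pr:complement} together with injectivity. Thus $A-\lam$ is a bijection of $D(A)$ onto $\mathcal{H}$ with bounded inverse, so $\lam \in \res(A)$, which proves the contrapositive.

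\emph{Main obstacle.} The delicate step is upgrading ``$\Ran(A-\lam)$ is dense'' to ``$\Ran(A-\lam) = \mathcal{H}$'': density alone shows only that $\lam$ is not an eigenvalue, not that it lies in the resolvent set. The closed-range argument above (the lower bound plus closedness of the graph) is exactly what bridges that gap, and it is why \emph{both} selfadjointness facts are needed — closedness of $A$ and the orthogonality $\Ran(A-\lam)^\perp = \ker(A-\lam)$. A reader who prefers a case split could instead handle $\ker(A-\lam)\neq\{0\}$ by taking a constant normalized sequence and $\ker(A-\lam)=\{0\}$ by the range argument, but routing everything through the lower-bound reformulation keeps the proof uniform.
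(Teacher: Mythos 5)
Your proof is correct and follows essentially the same route as the paper's: both directions rest on reality of the spectrum (Theorem~\ref{th:reality}), the identity $\overline{\Ran(A-\lam)}^{\perp} = \ker\bigl((A-\lam)^*\bigr)$ from Proposition~\ref{pr:complement}, and the Cauchy-sequence/closed-graph argument that upgrades a dense range to a full range once a lower bound $\lv (A-\lam)w\rv \geq c\lv w\rv$ is in hand. The only difference is organizational: by recasting ``a (W1)--(W2) sequence exists'' as the failure of a uniform lower bound and arguing by contraposition, you fold the paper's case split (eigenvalue; inverse unbounded on the range; inverse bounded on the range) into a single uniform argument.
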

\begin{proof}
``$\supset$'' Assume (W1) and (W2) hold for $\lam$, and that $A-\lam$ has an inverse defined on $\mathcal{H}$. Then for $f_n=(A - \lam)w_n$ we find
\[
\frac{\lv (A-\lam)^{-1}f_n \rv_\mathcal{H}}{\lv f_n \rv_\mathcal{H}} = \frac{\lv w_n \rv_\mathcal{H}}{\lv (A-\lam)w_n \rv_\mathcal{H}} \to \infty
\]
as $n \to \infty$, by (W1) and (W2). Thus the inverse operator is not bounded, and so $\lam \in \spec(A)$.

``$\subset$'' Assume $\lam \in \spec(A)$, so that $\lam$ is real by Theorem~\ref{th:reality}. If $\lam$ is an eigenvalue, say with normalized eigenvector $f$, then we simply choose $w_n=f$ for each $n$, and (W1) and (W2) hold trivially.

Suppose $\lam$ is not an eigenvalue. Then $A-\lam$ is injective, hence so is $(A-\lam)^*$, which equals $A-\lam$ by selfadjointness of $A$ and reality of $\lam$. Thus $\ker \big( (A-\lam)^* \big) = \{ 0 \}$, and so $\Ran(A-\lam)$ is dense in $\mathcal{H}$ by Proposition~\ref{pr:complement}.

Injectivity ensures that $(A-\lam)^{-1}$ exists on $\Ran(A-\lam)$. If it is unbounded there, then we may choose a sequence $f_n \in \Ran(A-\lam)$ with $\lv (A-\lam)^{-1}f_n \rv_\mathcal{H} = 1$ and $\lv f_n \rv_\mathcal{H} \to 0$. Letting $w_n=(A-\lam)^{-1}f_n$ gives (W1) and (W2) as desired. Suppose on the other hand that $(A-\lam)^{-1}$ is bounded on $\Ran(A-\lam)$. Then the argument in the proof of Theorem~\ref{th:reality} shows that $\Ran(A-\lam)=\mathcal{H}$, which means $\lam$ belongs to the resolvent set, and not the spectrum. Thus this case cannot occur.
\end{proof}

\subsection*{Discrete and continuous spectra}

Define the \textbf{discrete spectrum}
\begin{align*}
& \spec_{disc}(A) \\
& = \{ \lam \in \spec(A) :\text{$\lam$ is an isolated eigenvalue of $A$ having finite multiplicity} \} ,
\end{align*}
where ``isolated'' means that some neighborhood of $\lam$ in the complex plane intersects $\spec(A)$ only at $\lam$. By ``multiplicity'' we mean the geometric multiplicity (dimension of the eigenspace); if $A$ is not selfadjoint then we should use instead the algebraic multiplicity \cite{HS}.

Next define the \textbf{continuous spectrum}
\begin{align*}
& \spec_{cont}(A) \\
& = \{  \lam \in \C : \text{(W1), (W2) and (W3) hold for some sequence $w_n \in D(A)$} \} .
\end{align*}
The continuous spectrum lies within the spectrum, by Theorem~\ref{th:basicchar}. The characterization in that theorem required only (W1) and (W2), whereas the continuous spectrum imposes in addition the ``weak convergence'' condition (W3).

A \textbf{Weyl sequence} for $A$ and $\lam$ is a sequence $w_n \in D(A)$ such that (W1), (W2) and (W3) hold. Thus the preceding definition says the continuous spectrum consists of $\lam$-values for which Weyl sequences exist.

The continuous spectrum can contain eigenvalues that are not isolated (``imbedded eigenvalues'') or which have infinite multiplicity.

\medskip
A famous theorem of Weyl says that for selfadjoint operators, the entire spectrum is covered by the discrete and continuous spectra.
\begin{theorem} \label{th:weyldecomp}
If $A$ is selfadjoint then
\[
\spec(A) = \spec_{disc}(A) \cup \spec_{cont}(A) .
\]
(Further, the discrete and continuous spectra are disjoint.)
\end{theorem}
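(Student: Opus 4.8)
The goal is to show that for a selfadjoint operator $A$, every spectral point is either in $\spec_{disc}(A)$ or $\spec_{cont}(A)$, and that these two sets are disjoint. The containment $\spec_{disc}(A) \cup \spec_{cont}(A) \subset \spec(A)$ is immediate: $\spec_{cont}(A) \subset \spec(A)$ by Theorem~\ref{th:basicchar} (since a Weyl sequence in particular satisfies (W1) and (W2)), and $\spec_{disc}(A) \subset \spec(A)$ by definition, since an eigenvalue always lies in the spectrum. So the real work is the reverse inclusion: given $\lam \in \spec(A)$ that is \emph{not} in $\spec_{disc}(A)$, I must produce a Weyl sequence, i.e., upgrade the sequence from Theorem~\ref{th:basicchar} (which gives only (W1) and (W2)) to one that also satisfies the weak-convergence condition (W3).

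**Main steps.** First, fix $\lam \in \spec(A) \setminus \spec_{disc}(A)$; by Theorem~\ref{th:reality}, $\lam$ is real. There are two cases. \emph{Case 1: $\lam$ is not an isolated point of $\spec(A)$.} Then one can find distinct spectral points $\lam_k \to \lam$. For each $k$, Theorem~\ref{th:basicchar} applied to $\lam_k$ gives a unit vector $w_k$ with $\lv(A-\lam_k)w_k\rv$ small, and by a diagonal/refinement argument one arranges $\lv(A-\lam)w_k\rv \to 0$ as well, giving (W1) and (W2). For (W3): if some subsequence of $w_k$ did not tend weakly to $0$, pass to a further subsequence converging weakly to some $w \neq 0$ — but here I would instead argue directly that the $w_k$ can be chosen \emph{mutually orthogonal}. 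The standard device: using spectral projections $E$ of $A$ onto shrinking intervals around $\lam$ that each still meet $\spec(A)$ (these projections are nonzero precisely because $\lam$ is a non-isolated spectral point), pick unit vectors in the ranges of disjointly-supported spectral projections; orthogonality is then automatic and an orthonormal sequence converges weakly to $0$ by Bessel's inequality. \emph{Case 2: $\lam$ is an isolated point of $\spec(A)$.} Since $\lam \notin \spec_{disc}(A)$, it must be an eigenvalue of infinite multiplicity (an isolated spectral point of a selfadjoint operator is always an eigenvalue — its spectral projection onto a small interval is nonzero and, being finite-dimensional would force $\lam \in \spec_{disc}$). Then the eigenspace $\ker(A-\lam)$ is infinite-dimensional; take any orthonormal sequence $w_n$ in it. Each $w_n$ satisfies (W1) (trivially, $(A-\lam)w_n = 0$) and (W2), and (W3) holds because an orthonormal sequence tends weakly to $0$.

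**Disjointness.** If $\lam \in \spec_{disc}(A)$ then $\lam$ is an isolated eigenvalue of finite multiplicity, so the eigenspace $\ker(A-\lam)$ is finite-dimensional, say of dimension $m$. Suppose for contradiction a Weyl sequence $w_n$ exists. Decompose $w_n = p_n + q_n$ where $p_n$ is the orthogonal projection of $w_n$ onto $\ker(A-\lam)$ and $q_n \perp \ker(A-\lam)$. By (W3), $p_n \to 0$ in norm (weak convergence to $0$ in a finite-dimensional space is norm convergence), so $\lv q_n \rv \to 1$. On the complement of the eigenspace, isolatedness of $\lam$ in $\spec(A)$ means $A-\lam$ restricted there is bounded below: there is $c>0$ with $\lv(A-\lam)q\rv \geq c\lv q\rv$ for all $q \in D(A)$ with $q \perp \ker(A-\lam)$ — this is exactly where the spectral-gap structure around an isolated eigenvalue is used. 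Then $\lv(A-\lam)w_n\rv = \lv(A-\lam)q_n\rv \geq c\lv q_n\rv \to c > 0$, contradicting (W1).

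**Expected obstacle.** The hard part is the construction of the \emph{mutually orthogonal} approximate eigenvectors in Case 1 and the lower bound $\lv(A-\lam)q\rv \geq c\lv q\rv$ on the orthogonal complement in the disjointness argument; both are most cleanly handled with the spectral theorem / spectral projections for selfadjoint operators. If one wishes to avoid invoking the full spectral theorem, Case 1's (W3) can alternatively be obtained by a compactness-and-contradiction argument: take the sequence from Theorem~\ref{th:basicchar}, pass to a weakly convergent subsequence $w_n \rightharpoonup w$; if $w \neq 0$ one shows $(A-\lam)w = 0$ (using that the graph of $A-\lam$ is weakly closed, by selfadjointness and Proposition~\ref{pr:closedself}), so $w$ is an eigenvector — then replace $w_n$ by $(w_n - \la w_n,e\ra e)/\lv\cdot\rv$ projected off a suitable orthonormal eigenbasis, iterating if the multiplicity is finite, and using non-isolatedness to guarantee the process does not terminate in a contradiction. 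Either route works; I would present the spectral-projection version as the cleaner one and remark that it is where selfadjointness is essential.
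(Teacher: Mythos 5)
The paper does not actually prove this theorem --- it states ``We omit the proof'' and refers to \cite[Theorem 7.2]{HS} --- so there is no in-text argument to compare yours against. Judged on its own, your proposal is correct and is essentially the standard argument that the cited reference carries out. The decomposition into the two cases (non-isolated spectral point versus isolated eigenvalue of infinite multiplicity) is exactly right, and both key devices work: choosing unit vectors in the ranges of spectral projections $E(I_k)$ for pairwise disjoint intervals $I_k$ shrinking onto distinct spectral points $\lam_k \to \lam$ gives an orthonormal sequence (hence (W3) by Bessel) with $\lv (A-\lam)w_k \rv \leq \sup_{t \in I_k}|t-\lam| \to 0$; and in the disjointness argument, the spectral gap around an isolated eigenvalue gives $\lv (A-\lam)q \rv \geq \delta \lv q \rv$ on $D(A) \cap \ker(A-\lam)^\perp$, which together with $p_n \to 0$ in norm (weak convergence in a finite-dimensional eigenspace) contradicts (W1). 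The one thing to flag is that every load-bearing step --- nontriviality of $E(I)$ when $I$ meets the spectrum, the fact that an isolated spectral point is an eigenvalue with $\Ran E(\{\lam\}) = \ker(A-\lam)$, and the gap estimate --- rests on the spectral theorem (projection-valued measure / functional calculus) for unbounded selfadjoint operators, which these notes never develop; that is precisely why the author outsources the proof. Your closing remark that the compactness-and-contradiction alternative for (W3) is shakier is accurate: without spectral projections it is genuinely awkward to rule out the sequence repeatedly concentrating on a finite-dimensional eigenspace, so presenting the spectral-projection version as primary is the right call.
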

We omit the proof. See \cite[Theorem 7.2]{HS}.

\subsection*{Applications to Schr\"{o}dinger operators}

The continuous spectrum of the Laplacian $-\Delta$ equals $[0,\infty)$, and the spectrum contains no eigenvalues, as we saw in Chapter~\ref{ch:freeSchr}.

The hydrogen atom too has continuous spectrum $[0,\infty)$, with its Schr\"{o}dinger operator $L=-\Delta-2/|x|$ on $\R^3$ having domain $H^2(\Rd) \subset L^2(\Rd)$; see \cite[Section 8.7]{T}. The discrete spectrum $\{ -1/n^2 : n \geq 1 \}$ of the hydrogen atom was stated in Chapter~\ref{ch:soce}.

As the hydrogen atom example suggests, potentials vanishing at infinity generate continuous spectrum that includes all nonnegative numbers:
\begin{theorem} \label{th:kato}
Assume $V(x)$ is real-valued, continuous, and vanishes at infinity ($V(x) \to 0$ as $|x| \to \infty$).

Then the Schr\"{o}dinger operator $-\Delta + V$ is selfadjoint (with domain $H^2(\Rd) \subset L^2(\Rd)$) and has continuous spectrum $=[0,\infty)$.
\end{theorem}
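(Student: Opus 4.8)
The plan is to establish three separate facts: (a) that $-\Delta+V$ is selfadjoint on $H^2(\Rd)$, (b) that $[0,\infty) \subset \spec_{cont}(-\Delta+V)$, and (c) that $\spec(-\Delta+V) \subset [0,\infty)$, from which $\spec_{cont}=[0,\infty)$ follows since the continuous spectrum always lies inside the spectrum (Theorem~\ref{th:basicchar}) and the spectrum is covered by $\spec_{disc}\cup\spec_{cont}$ (Theorem~\ref{th:weyldecomp}). For (a) I would note that a continuous potential vanishing at infinity is automatically bounded on $\Rd$, so selfadjointness on $H^2(\Rd)$ is exactly the statement proved in the ``selfadjointness for Schr\"{o}dinger operators'' example of Chapter~\ref{ch:selfadjoint}. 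That disposes of the selfadjointness claim with essentially no new work.

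For (b), I would construct Weyl sequences by the same localization trick used in Proposition~\ref{le:weylexist}. Fix $\lam\geq 0$, pick $\omega\in\Rd$ with $4\pi^2|\omega|^2=\lam$, and set $w_n = c_n \kappa(x/n)\,e^{2\pi i\omega\cdot x}$ with $c_n = (n^{d/2}\lv\kappa\rv_{L^2})^{-1}$, exactly as before. Conditions (W2) and (W3) are verified verbatim as in that proposition. For (W1) the only new contribution is the term $V(x)w_n(x)$, and here I would split the integral: on the region $|x|\geq R$ the factor $|V(x)|$ is small (since $V$ vanishes at infinity), so that part of the $L^2$ norm is $\leq \e\lv w_n\rv_{L^2}=\e$; on $|x|\leq R$ we have $|V|\leq \lv V\rv_{L^\infty}$ while $\int_{|x|\leq R}|w_n|^2\,dx\to 0$ as $n\to\infty$ because the $L^2$ mass of $w_n$ escapes to infinity (its support is contained in $\{|x|\leq 2n\}$ and the mass is spread uniformly). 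Letting $n\to\infty$ then $R\to\infty$ gives $\lv V w_n\rv_{L^2}\to 0$, and combined with the free-Laplacian estimate from Proposition~\ref{le:weylexist} this yields (W1). Hence each $\lam\geq 0$ admits a Weyl sequence and lies in $\spec_{cont}$.

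For (c) I would show that every $\lam<0$ and every nonreal $\lam$ lies in the resolvent set. Nonreal $\lam$ are handled by Theorem~\ref{th:reality} (selfadjointness forces the spectrum into $\R$). For $\lam<0$ the idea is to use ellipticity: for $f\in H^2(\Rd)$,
\[
\la (-\Delta+V-\lam)f,f\ra = \lv\nabla f\rv_{L^2}^2 + \int_\Rd V|f|^2\,dx - \lam\lv f\rv_{L^2}^2 .
\]
The obstacle here is that $V$ need not be nonnegative, so the middle term can be negative. The key point is that $V$ vanishes at infinity, which lets one absorb the negative part of $V$ into the other terms at the cost of shrinking the spectral gap: one shows $\int V|f|^2 \geq -\e\lv\nabla f\rv^2 - C_\e\lv f\rv^2$ for any $\e>0$ (split $\Rd$ into $|x|<R$ where $|V|\leq \lv V\rv_{L^\infty}$, absorbing via the uncertainty-principle type inequality $\lv f\rv_{L^2(B_R)}^2 \leq \e\lv\nabla f\rv^2 + C\lv f\rv^2$, and $|x|\geq R$ where $|V|<\e$). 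This gives a coercivity bound $\la(-\Delta+V-\lam)f,f\ra \geq c(\lam)\lv f\rv^2$ for $\lam$ sufficiently negative, hence injectivity with a lower bound, and then the closed-range/density argument of Theorem~\ref{th:reality} (using selfadjointness and Propositions~\ref{pr:complement} and \ref{pr:closedself}) produces a bounded inverse. This shows $(-\infty,-T)\subset\res$ for some $T>0$. To reach all of $(-\infty,0)$ one observes that $\spec(-\Delta+V)\cap(-\infty,0)$ consists only of isolated eigenvalues of finite multiplicity (by Theorem~\ref{th:weyldecomp}, since no Weyl sequence can exist below $0$: an analogous coercivity estimate localized in frequency rules out (W1)+(W2) for $\lam<0$ outside a compact set, and more carefully one cites the standard fact that $V\to0$ forces the essential spectrum to be $[0,\infty)$). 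The cleanest route, which I would take, is: the continuous spectrum is $[0,\infty)$ by the essential-spectrum stability under the relatively compact perturbation $V$ (Weyl's theorem on essential spectrum), so that $\spec\setminus[0,\infty)$ is discrete, and since there are no nonreal points either, $\spec\subset[0,\infty)\cup(\text{discrete negative eigenvalues})$; but the theorem as stated only claims $\spec_{cont}=[0,\infty)$, which is already established by (b) together with the fact that negative eigenvalues are discrete, not continuous. Thus (c) reduces to showing no $\lam<0$ admits a Weyl sequence, which the coercivity estimate above delivers: if (W1) and (W2) held then $\la(-\Delta+V-\lam)w_n,w_n\ra\to 0$, contradicting $\la(-\Delta+V-\lam)w_n,w_n\ra\geq |\lam|\cdot\tfrac12$ for large $n$ after absorbing $V$ using weak convergence (W3) to kill the compactly-supported part of $V$. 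I expect this last absorption step — making rigorous that $\int V|w_n|^2\to 0$ along a Weyl sequence, using that multiplication by a $V$ vanishing at infinity is a compact-type operation against weakly null sequences — to be the main technical obstacle.
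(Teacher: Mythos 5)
First, note that the paper does not actually prove Theorem~\ref{th:kato}: it defers entirely to \cite[Corollary~14.10]{HS}, remarking only that the stronger version there uses a decomposition $V=V_2+V_\infty$ with $V_2\in L^2$ and $\lv V_\infty\rv_{L^\infty}<\e$. So your proposal is supplying an argument the notes omit, and its overall architecture is the standard one and is sound: (a) is immediate since a continuous potential vanishing at infinity is bounded, so the selfadjointness example of Chapter~\ref{ch:selfadjoint} applies verbatim; (b) is the correct adaptation of Proposition~\ref{le:weylexist}, and your near/far splitting of $\lv Vw_n\rv_{L^2}$ (using $|V|<\e$ for $|x|\ge R$ and $\int_{|x|\le R}|w_n|^2\le c_n^2\lv\kappa\rv_{L^\infty}^2|B_R|\to 0$) is exactly the right replacement for the $V\in L^2$ trick used in Chapter~\ref{ch:sechsquared}; and the final reduction in (c) — that since the theorem only asserts $\spec_{cont}=[0,\infty)$, it suffices to rule out Weyl sequences for $\lam\notin[0,\infty)$, with nonreal $\lam$ excluded by Theorem~\ref{th:reality} via Theorem~\ref{th:basicchar} — is the right way to avoid the detour through resolvent estimates for $\lam\ll 0$ and through Weyl's essential-spectrum theorem (which, as you half-notice, would be circular to invoke here).

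The one genuine gap is the step you yourself flag: the claim that $\int_{\Rd} V|w_n|^2\,dx\to 0$ along a Weyl sequence "using weak convergence (W3)". Weak convergence $w_n\rightharpoonup 0$ in $L^2$ with $\lv w_n\rv_{L^2}=1$ does \emph{not} imply this, even for compactly supported continuous $V$: the concentrating bumps $w_n(x)=n^{d/2}\phi(nx)$ are weakly null and normalized, yet $\int V|w_n|^2\to V(0)$. What saves the argument is (W1) itself: from $\lv(-\Delta+V-\lam)w_n\rv_{L^2}\to 0$, $\lv w_n\rv_{L^2}=1$, and boundedness of $V$ you get $\lv\Delta w_n\rv_{L^2}\le C$, hence $\{w_n\}$ is bounded in $H^2(\Rd)$ (and in particular in $H^1(B_R)$ for every $R$). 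Rellich--Kondrachov then upgrades the weak $L^2$ convergence of (W3) to strong convergence $w_n\to 0$ in $L^2(B_R)$, so $\int_{|x|<R}|V||w_n|^2\le\lv V\rv_{L^\infty}\lv w_n\rv_{L^2(B_R)}^2\to 0$, while $\int_{|x|\ge R}|V||w_n|^2\le\sup_{|x|\ge R}|V|\to 0$ as $R\to\infty$. With that in hand your coercivity contradiction for $\lam<0$ closes: $\la(-\Delta+V-\lam)w_n,w_n\ra\ge |\lam|+\int V|w_n|^2\to|\lam|>0$, while (W1) and (W2) force the left side to $0$. So the proposal is correct once this absorption step is carried out via the $H^2$ bound plus local compactness rather than weak convergence alone.
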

For a proof see \cite[Corollary 14.10]{HS}, where a stronger theorem is proved that covers also the Coulomb potential $-2/|x|$ for the hydrogen atom. Note the Coulomb potential vanishes at infinity but is discontinuous at the origin, where it blows up. The stronger version of the theorem requires (instead of continuity and vanishing at infinity) that for each $\e>0$, the potential $V(x)$ be decomposable as $V=V_2+V_\infty$ where $V_2 \in L^2$ and $\lv V_\infty \rv_{L^\infty} < \e$. This decomposition can easily be verified for the Coulomb potential, by ``cutting off'' the potential near infinity.

Theorem~\ref{th:kato} implies that any isolated eigenvalues of $L$ must lie on the negative real axis (possibly accumulating at $0$). For example, the $-2\sech^2$ potential in Chapter~\ref{ch:sechsquared} generates a negative eigenvalue at $-1$.

\subsubsection*{Connection to generalized eigenvalues and eigenfunctions}

Just as the discrete spectrum is characterized by eigenfunctions in $L^2$, so the full spectrum is characterized by existence of a generalized eigenfunction that grows at most polynomially at infinity.
\begin{theorem}
Assume $V(x)$ is real-valued and bounded on $\Rd$. Then the Schr\"{o}dinger operator $-\Delta + V$ has spectrum
\begin{align*}
\spec(-\Delta+V) & = \\
\text{closure of\ } & \{ \lam \in \C : \text{$(-\Delta+V)u=\lam u$ for some polynomially bounded $u$} \} .
\end{align*}
\end{theorem}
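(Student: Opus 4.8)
The plan is to prove the two inclusions separately. Write $\Sigma$ for the set $\{ \lam \in \C : (-\Delta+V)u = \lam u \text{ for some polynomially bounded } u \not\equiv 0 \}$, so that we must show $\spec(-\Delta+V) = \overline{\Sigma}$. Since the spectrum is closed (it is the complement of the open resolvent set), it suffices to show $\Sigma \subset \spec(-\Delta+V)$ and $\spec(-\Delta+V) \subset \overline{\Sigma}$.

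For the easy direction, $\spec(-\Delta+V) \subset \overline{\Sigma}$, I would use Theorem~\ref{th:basicchar}: if $\lam \in \spec(-\Delta+V)$ then there is a sequence $w_n \in H^2(\Rd)$ with $\lv w_n \rv_{L^2}=1$ and $\lv (-\Delta+V-\lam)w_n \rv_{L^2} \to 0$. The idea is to convert such an approximate-eigenfunction sequence into an honest polynomially bounded generalized eigenfunction for a nearby $\lam'$. One route: cut off and mollify, then observe that a genuine eigenfunction equation $(-\Delta+V-\lam')u=0$ with $u$ polynomially bounded can be produced by elliptic a-priori estimates (local $H^2$ bounds imply local $C^0$ bounds in low dimensions, or one passes to weak solutions in general). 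More robustly, I would appeal to the standard fact (Berezanskii/Simon expansion theory) that a selfadjoint Schr\"odinger operator with bounded potential possesses, for spectrally almost every $\lam$ in its spectrum, a polynomially bounded generalized eigenfunction; since the spectrum is the closed support of the spectral measure, every spectral point is a limit of such $\lam$, giving $\spec(-\Delta+V)\subset\overline{\Sigma}$.

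For the reverse inclusion $\Sigma \subset \spec(-\Delta+V)$, suppose $(-\Delta+V)u = \lam u$ with $u\not\equiv 0$ polynomially bounded, say $|u(x)| \leq C(1+|x|)^N$. If $\lam \notin \spec(-\Delta+V)$ I want a contradiction. The natural device is to localize: pick a smooth cutoff $\kappa$ supported in $B(2)$ with $\kappa \equiv 1$ on $B(1)$, set $w_n = c_n \kappa(x/n) u(x)$ with $c_n = \lv \kappa(\cdot/n) u \rv_{L^2}^{-1}$. Because $u$ grows only polynomially and $\kappa$ is supported on an annulus of radius $\sim n$, the Leibniz error terms $(-\Delta+V-\lam)w_n = c_n\big(-2\nabla\kappa(x/n)\cdot\nabla u/n - u\,\Delta\kappa(x/n)/n^2\big)$ are supported on $\{n \leq |x| \leq 2n\}$; a ratio estimate comparing the $L^2$-norm of these error terms on the shell to $c_n^{-1} = \lv \kappa(\cdot/n) u\rv_{L^2}$ shows $\lv (-\Delta+V-\lam)w_n\rv_{L^2} \to 0$, so $(W1)$ and $(W2)$ hold and $\lam \in \spec(-\Delta+V)$ by Theorem~\ref{th:basicchar}.

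The main obstacle is the ratio estimate in the last step: one needs $c_n^{-1} = \lv\kappa(\cdot/n)u\rv_{L^2}$ to dominate the shell-supported error norms, and this is \emph{not} automatic for a general polynomially bounded $u$ — a solution could conceivably be concentrated well inside $B(n)$ so that its mass on the shell $n \leq |x|\leq 2n$ is negligible compared to its total mass on $B(2n)$, yet also the derivatives $\nabla u$ on the shell could be large. The resolution uses that $u$ solves an elliptic equation: interior elliptic estimates bound $\lv \nabla u \rv_{L^2(\text{shell})}$ by $\lv u \rv_{L^2(\text{slightly larger shell})}$ (with constants independent of $n$, since the coefficients $V,\lam$ are bounded), and a dyadic/averaging argument over a range of scales $n$ forces the shell mass to be a non-negligible fraction of the ball mass for \emph{infinitely many} $n$ (otherwise $u$ would decay faster than any polynomial, contradicting $u\not\equiv 0$ via unique continuation, or more simply contradicting the growth being exactly polynomial after renormalization). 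Passing to that subsequence of good scales $n$ gives the Weyl sequence. Getting this averaging argument clean — rather than hand-waving "a little thought shows" as in the rectangle proof — is where the real work lies; everything else is bookkeeping with cutoffs and the already-established Theorem~\ref{th:basicchar}.
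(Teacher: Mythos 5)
First, a point of reference: the paper does not prove this theorem --- it states it and defers to Gustafson--Sigal \cite[Theorem 5.22]{GS} --- so there is no in-text argument to compare yours against. Your two-inclusion plan (Sch'nol-type cutoff argument for $\Sigma \subset \spec(-\Delta+V)$, eigenfunction-expansion theory for $\spec(-\Delta+V) \subset \overline{\Sigma}$) is exactly the standard route and the one taken in that reference. However, for the direction $\spec \subset \overline{\Sigma}$ your first suggested mechanism does not work: from a Weyl sequence one cannot ``cut off and mollify'' to produce a genuine nonzero polynomially bounded solution, because the $L^2$-normalized approximate eigenfunctions may converge weakly to zero and there is no compactness on $\Rd$ to extract a nonzero limit. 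Your fallback --- citing the Berezanskii/Gel'fand expansion theorem that spectrally almost every $\lam$ in the spectrum admits a polynomially bounded generalized eigenfunction --- is the only viable route, and it \emph{is} the substance of this half of the theorem; importing it wholesale is defensible (the paper does the same by omitting the proof), but it should be presented as the load-bearing ingredient rather than as one of two interchangeable options.

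In the Sch'nol direction your architecture is right but the crucial dichotomy is stated backwards. Write $F(r)=\lv u \rv_{L^2(B(r))}^2$. The error $(-\Delta+V-\lam)w_n$ is supported on the shell $\{n\le|x|\le 2n\}$ and carries factors $n^{-1}$ and $n^{-2}$ from the dilated cutoff; together with the Caccioppoli estimate for $\nabla u$ this gives $\lv(-\Delta+V-\lam)w_n\rv_{L^2}/\lv w_n\rv_{L^2}\lesssim n^{-1}\big(F(Cn)/F(n)\big)^{1/2}$. What you need is therefore that $F(Cn)/\big(n^2F(n)\big)$ be \emph{small} along a subsequence --- the shell mass must be a negligible multiple of $n^2$ times the ball mass infinitely often --- not ``non-negligible'' as you wrote. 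The correct pigeonhole is: if $F(Cn)\ge\delta\,n^2F(n)$ for all large $n$, then iterating along dyadic scales $n_k=2^k n_0$ gives $F(n_K)\gtrsim 2^{cK^2}$, which is super-polynomial in $n_K$ and contradicts $|u(x)|\le C(1+|x|)^N$; hence for every $\delta>0$ infinitely many good scales exist, and a diagonal extraction gives the Weyl sequence. Your parenthetical justification (``otherwise $u$ would decay faster than any polynomial'') is also not the relevant contradiction --- the contradiction is with polynomial \emph{growth} of $F$, not with decay, and unique continuation is not needed. With the dichotomy corrected this half of the argument is sound.
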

We omit the proof; see \cite[Theorem 5.22]{GS}.

\subsubsection*{Further reading}

A wealth of information on spectral theory, especially for Schr\"{o}dinger operators, can be found in the books \cite{GS,HS,RS2,RS4}.

\chapter{Discrete spectrum revisited}

\label{ch:revisited}

\subsubsection*{Goal} To fit the discrete spectral Theorem~\ref{th:spec} (from Part~\ref{part:discrete} of the course) into the spectral theory of selfadjoint operators and, in particular, to prove the absence of continuous spectrum in that situation.

\subsection*{Discrete spectral theorem}

The discrete spectral Theorem~\ref{th:spec} concerns a symmetric,
elliptic, bounded sesquilinear form $a(u,v)$ on an infinite dimensional
Hilbert space $\mathcal{K}$, where $\mathcal{K}$ imbeds compactly and densely into the Hilbert
space $\mathcal{H}$. The theorem guarantees existence of an ONB for $\mathcal{H}$ consisting
of eigenvectors of $a$:
\[
a(u_j,v) = \gamma_j \la u_j , v \ra_\mathcal{H} \qquad \forall v \in \mathcal{K} ,
\]
where the eigenvalues satisfy
\[
0 < \gamma_1 \leq \gamma_2 \leq \gamma_3 \leq \cdots \to \infty .
\]
We want to interpret these eigenvalues as the discrete spectrum of some
selfadjoint, densely defined linear operator on $\mathcal{H}$. By doing so, we will
link the discrete spectral theory in Part~\ref{part:discrete} of the
course with the spectral theory of unbounded operators in
Part~\ref{part:continuous}.

Our tasks are to identify the operator $A$ and its domain, to prove $A$ is
symmetric, to determine the domain of the adjoint, to conclude
selfadjointness, and finally to show that the spectrum of $A$ consists
precisely of the eigenvalues $\gamma_j$.

\subsubsection*{Operator $A$ and its domain}
In the proof of Theorem~\ref{th:spec} we found a bounded, selfadjoint
linear operator $B : \mathcal{H} \to \mathcal{K} \subset \mathcal{H}$ with eigenvalues $1/\gamma_j$ and eigenvectors $u_j$:
\[
B u_j = \frac{1}{\gamma_j} u_j .
\]
We showed $B$ is injective (meaning its eigenvalues are nonzero). Notice
$B$ has dense range because its eigenvectors $u_j$ span $\mathcal{H}$.

(\emph{Aside.} This operator $B$ relates to the sesquilinear form $a$ by satisfying $a(Bf,v) = \la f , v \ra_\mathcal{H}$ for all $v \in \mathcal{K}$. We will not need that formula below.)

Define
\[
A = B^{-1} : \Ran(B) \to \mathcal{H} .
\]
Then $A$ is a linear operator, and its domain
\[
D(A) = \Ran(B)
\]
is dense in $\mathcal{H}$.

\subsubsection*{Symmetry of $A$}
Let $u,v \in D(A)$. Then
\begin{align*}
\la Au,v \ra_\mathcal{H}
& = \la Au,BAv \ra_\mathcal{H} && \text{since $BA=\text{Id}$,} \\
& = \la BAu,Av \ra_\mathcal{H} && \text{since $B$ is selfadjoint,} \\
& = \la u,Av \ra_\mathcal{H} && \text{since $BA=\text{Id}$.}
\end{align*}

\subsubsection*{Domain of the adjoint}
First we show $D(A) \subset D(A^*)$. Let $u \in D(A)$. For all $v \in
D(A)$ we have
\begin{align*}
| \la u,Av \ra_\mathcal{H} |
& = | \la Au,v \ra_\mathcal{H} |  && \text{by symmetry} \\
& \leq \lv Au \rv_\mathcal{H} \lv v \rv_\mathcal{H} .
\end{align*}
Hence the functional $v \mapsto \la u,Av \ra_\mathcal{H}$ is bounded on $D(A)$ with
respect to the $\mathcal{H}$-norm, so that $u$ belongs to the domain of the adjoint
$A^*$.

Next we show $D(A^*) \subset D(A)$. Let $u \in D(A^*) \subset \mathcal{H}$. We have
\[
| \la u,Av \ra_\mathcal{H} | \leq (\text{const.}) \lv v \rv_\mathcal{H} \qquad \forall v \in
D(A) = \Ran(B).
\]
Writing $v=Bg$ gives
\[
| \la u,g \ra_\mathcal{H} | \leq (\text{const.}) \lv Bg \rv_\mathcal{H} \qquad \forall g \in \mathcal{H} .
\]
One can express $u$ in terms of the ONB as $u = \sum_j d_j u_j$. Fix $J
\geq 1$ and choose $g = \sum_{j=1}^J \gamma_j^2 d_j u_j \in \mathcal{H}$, so that
$Bg = \sum_{j=1}^J \gamma_j d_j u_j$. We deduce from the last inequality
that
\[
\sum_{j=1}^J \gamma_j^2 |d_j|^2 \leq (\text{const.}) \big( \sum_{j=1}^J
\gamma_j^2 |d_j|^2 \big)^{1/2} ,
\]
and so
\[
\sum_{j=1}^J \gamma_j^2 |d_j|^2 \leq (\text{const.})^2
\]
Letting $J \to \infty$ implies that
\[
\sum_j \gamma_j^2 |d_j|^2 \leq (\text{const.})^2
\]
and so the sequence $\{ \gamma_j d_j \}$ belongs to $\ell^2$. Put
$f=\sum_j \gamma_j d_j u_j \in \mathcal{H}$. Then $Bf = \sum_j d_j u_j = u$, and so
$u \in \Ran(B) = D(A)$, as desired.

\subsubsection*{Selfadjointness, and discreteness of the spectrum}
\begin{theorem} \label{th:pure}
$A$ is selfadjoint, with domain
\[
D(A) = \Ran(B) = \big\{ \sum_j \gamma_j^{-1} c_j u_j : \{ c_j \} \in
\ell^2 \big\} .
\]
Furthermore, $\spec(A)= \spec_{disc}(A) = \{ \gamma_j :  j \geq 1 \}$.
\end{theorem}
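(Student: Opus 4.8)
The plan is to assemble the facts already established above and then compute the resolvent of $A$ explicitly in the eigenbasis. Selfadjointness is immediate: symmetry of $A$ was proved above, and we also showed $D(A)=D(A^*)$, so $A$ is selfadjoint by the earlier Proposition characterizing selfadjoint operators as the symmetric ones whose domain equals that of their adjoint. The description of $D(A)$ likewise follows from work already done. Since $\{u_j\}$ is an ONB of $\mathcal{H}$ and $Bu_j=\gamma_j^{-1}u_j$, for $f=\sum_j\la f,u_j\ra_\mathcal{H}u_j\in\mathcal{H}$ we have $Bf=\sum_j\gamma_j^{-1}\la f,u_j\ra_\mathcal{H}\,u_j$; writing $c_j=\la f,u_j\ra_\mathcal{H}$ exhibits $\Ran(B)$ as $\{\sum_j\gamma_j^{-1}c_ju_j:\{c_j\}\in\ell^2\}$. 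Conversely, given $\{c_j\}\in\ell^2$ the series $\sum_j\gamma_j^{-1}c_ju_j$ converges in $\mathcal{H}$ because $\gamma_j^{-1}\le\gamma_1^{-1}$, and it equals $B(\sum_jc_ju_j)$. Together with $D(A)=\Ran(B)$ this gives the stated formula.

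Next I would record that every $\gamma_j$ is an eigenvalue of $A$ of finite multiplicity. From $Bu_j=\gamma_j^{-1}u_j$ we get $u_j=B(\gamma_ju_j)\in\Ran(B)=D(A)$ and, applying $A=B^{-1}$, $Au_j=\gamma_ju_j$; hence $\{\gamma_j:j\ge1\}\subset\spec(A)$. If $(A-\gamma_j)v=0$ with $v\in D(A)$, write $v=Bf$ so that $Av=f=\sum_k\gamma_k\la v,u_k\ra_\mathcal{H}u_k$; then $(\gamma_k-\gamma_j)\la v,u_k\ra_\mathcal{H}=0$ for all $k$, so $\ker(A-\gamma_j)$ is the span of those $u_k$ with $\gamma_k=\gamma_j$, a finite-dimensional space since $\gamma_k\to\infty$.

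For the reverse inclusion $\spec(A)\subset\{\gamma_j:j\ge1\}$ I would exhibit a bounded everywhere-defined inverse of $A-\lam$ whenever $\lam\notin\{\gamma_j\}$. The set $\{\gamma_j\}$ is closed (it is discrete and tends to $\infty$), so $d:=\operatorname{dist}(\lam,\{\gamma_j\})>0$; also $\gamma_j/(\gamma_j-\lam)\to1$, so this ratio is bounded by some $M$. Define $R_\lam f=\sum_j(\gamma_j-\lam)^{-1}\la f,u_j\ra_\mathcal{H}\,u_j$. Then $\lv R_\lam f\rv_\mathcal{H}^2\le d^{-2}\lv f\rv_\mathcal{H}^2$, so $R_\lam$ is bounded on $\mathcal{H}$; and rewriting $R_\lam f=\sum_j\gamma_j^{-1}\bigl(\tfrac{\gamma_j}{\gamma_j-\lam}\la f,u_j\ra_\mathcal{H}\bigr)u_j$, whose coefficient sequence has $\ell^2$-norm at most $M\lv f\rv_\mathcal{H}$, shows $R_\lam f\in\Ran(B)=D(A)$ by the domain formula. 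A termwise check in the eigenbasis gives $(A-\lam)R_\lam f=f$ for all $f\in\mathcal{H}$ and $R_\lam(A-\lam)g=g$ for all $g\in D(A)$ (using $Ag=\sum_j\gamma_j\la g,u_j\ra_\mathcal{H}u_j$, valid because $g=Bh$ forces $Ag=h$). Hence $(A-\lam)^{-1}=R_\lam$ is bounded and defined on all of $\mathcal{H}$, so $\lam\in\res(A)$ (the case $\lam=0$ is subsumed, with $R_0=B$). This proves $\spec(A)=\{\gamma_j:j\ge1\}$, and since that set is discrete in $\R$ with each point an isolated eigenvalue of finite multiplicity, $\spec(A)=\spec_{disc}(A)$; in particular $\spec_{cont}(A)=\emptyset$ by Theorem~\ref{th:weyldecomp}.

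The only genuinely delicate point is keeping the bookkeeping honest when passing infinite eigenfunction expansions through the \emph{unbounded} operator $A$. The legitimate move is never to apply $A$ termwise to a series, but rather to use $A=B^{-1}$ together with $AB=\mathrm{id}$: a vector $g\in D(A)=\Ran(B)$ automatically comes written as $g=Bh$ with $Ag=h\in\mathcal{H}$, and the coefficients of $h$ are then $\gamma_j\la g,u_j\ra_\mathcal{H}$. Once this is observed, all the identities above are routine, and every estimate reduces to the elementary facts $\gamma_j\ge\gamma_1$ and $\gamma_j/(\gamma_j-\lam)\to1$.
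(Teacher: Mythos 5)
Your proposal is correct and follows essentially the same route as the paper: selfadjointness from the previously established symmetry and equality $D(A)=D(A^*)$, and the spectrum identified by constructing the resolvent explicitly in the eigenbasis (your $R_\lam$ is the paper's operator $C$, with the same boundedness and range arguments). You supply slightly more detail than the paper on the finite multiplicity of each $\gamma_j$ and on avoiding termwise application of the unbounded $A$, but the substance is identical.
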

\begin{proof}
We have shown above that $A$ is symmetric and $D(A^*)=D(A)$, which
together imply that $A$ is selfadjoint.

We will show that if
\[
\lam \in \C \setminus \{ \gamma_1,\gamma_2,\gamma_3,\ldots \}
\]
then $A-\lam$ is invertible, so that $\lam$ belongs to the resolvent set.
Thus the spectrum consists of precisely the eigenvalues $\gamma_j$. Note
each eigenvalue has finite multiplicity by Theorem~\ref{th:spec}, and is
isolated from the rest of the spectrum; hence $A$ has purely discrete
spectrum.

The inverse of $A-\lam$ can be defined explicitly, as follows. Define a
bounded operator $C : \mathcal{H} \to \mathcal{H}$ on $f=\sum_j c_j u_j \in \mathcal{H}$ by
\[
Cf = \sum_j (\gamma_j - \lam)^{-1} c_j u_j ,
\]
where we note that $(\gamma_j - \lam)^{-1}$ is bounded for all $j$, and in
fact approaches $0$ as $j \to \infty$, because $|\gamma_j - \lam|$ is
never zero and tends to $\infty$ as $j \to \infty$. This new operator has
range $\Ran(C)=\Ran(B)$, because $(\gamma_j - \lam)^{-1}$ is comparable to
$\gamma_j^{-1}$ (referring here to the characterization of $\Ran(B)$ in
Theorem~\ref{th:pure}). Thus $\Ran(C)=D(A)$.

Clearly  $(A-\lam)Cf=f$ by definition of $A$, and so $\Ran(A-\lam)=\mathcal{H}$.
Similarly one finds that $C(A-\lam)u=u$ for all $u \in D(A)$. Thus
$C$ is the inverse operator of $A-\lam$. Because $C$ is bounded on all of $\mathcal{H}$ we conclude $A-\lam$ is invertible, according to the definition in Chapter~\ref{ch:selfadjoint}.
\end{proof}

\subsection*{Example: Laplacian on a bounded domain}

To animate the preceding theory, let us consider the Laplacian on a
bounded domain $\Omega \subset \Rd$, with Dirichlet boundary conditions.
We work with the Hilbert spaces
\[
\mathcal{H} = L^2(\Omega), \qquad \mathcal{K} = H^1_0(\Omega) ,
\]
and the sesquilinear form
\[
a(u,v) = \int_\Omega \nabla u \cdot \nabla v \, dx + \int_\Omega uv \, dx
= \la u,v \ra_{H^1} ,
\]
which in Chapter~\ref{ch:aONB} gave eigenfunctions satisfying
$(-\Delta+1)u=(\lam+1) u$ weakly. In this setting, $u=Bf$ means that
$(-\Delta +1)u=f$ weakly. Note $B : L^2(\Omega) \to H^1_0(\Omega)$, and
recall that $A=B^{-1}$.
\begin{proposition}
The domain of the operator $A$ contains $H^2(\Omega) \cap H^1_0(\Omega)$, and
\[
A = -\Delta + 1
\]
on $H^2(\Omega) \cap H^1_0(\Omega)$.

Furthermore, if $\partial \Omega$ is smooth then $D(A) = H^2(\Omega) \cap
H^1_0(\Omega)$, in which case $A=-\Delta+1$ on all of its domain.
\end{proposition}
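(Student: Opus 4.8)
The plan is to prove the two inclusions $H^2(\Omega)\cap H^1_0(\Omega)\subseteq D(A)$ and (under the smoothness hypothesis on $\partial\Omega$) $D(A)\subseteq H^2(\Omega)\cap H^1_0(\Omega)$ separately, in each case using the identification $D(A)=\Ran(B)$ together with the weak characterization: $u=Bf$ means precisely that $u\in H^1_0(\Omega)$ and $\la u,v\ra_{H^1}=\la f,v\ra_{L^2}$ for all $v\in H^1_0(\Omega)$. Since $B$ is injective and $A=B^{-1}$, once we know $u=Bf$ we automatically know $u\in D(A)$ and $Au=f$.

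First I would establish the easy inclusion, which needs no regularity of $\partial\Omega$. Given $u\in H^2(\Omega)\cap H^1_0(\Omega)$, set $f:=-\Delta u+u$, which lies in $L^2(\Omega)$ because $u$ has two derivatives in $L^2$. Green's first identity for an $H^2$ function tested against $v\in H^1_0(\Omega)$ — proved by approximating $v$ by functions in $C^\infty_0(\Omega)$, integrating by parts (the boundary term vanishing since those test functions have compact support), and passing to the limit (legitimate because both sides are continuous in $v$ with respect to the $H^1$-norm, using $\Delta u\in L^2$) — yields $\int_\Omega(\nabla u\cdot\nabla v+uv)\,dx=\int_\Omega fv\,dx$ for all $v\in H^1_0(\Omega)$. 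That is exactly the statement $u=Bf$, so $u\in\Ran(B)=D(A)$ and $Au=B^{-1}u=f=-\Delta u+u$. This simultaneously proves $H^2\cap H^1_0\subseteq D(A)$ and that $A$ acts as $-\Delta+1$ on that set.

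For the reverse inclusion I would invoke global elliptic regularity. Let $u\in D(A)=\Ran(B)$, so $u\in H^1_0(\Omega)$ and $u=Bf$ for some $f\in L^2(\Omega)$; unwinding the weak equation, $u$ is a weak solution of $-\Delta u=f-u$ with right-hand side in $L^2(\Omega)$ and zero Dirichlet data. The $H^2$ regularity theorem for the Dirichlet problem up to the boundary (e.g.\ \cite{E} or \cite[Theorem 8.12]{GT}), which requires $\partial\Omega$ to be smooth, then gives $u\in H^2(\Omega)$; since already $u\in H^1_0(\Omega)$, we conclude $u\in H^2(\Omega)\cap H^1_0(\Omega)$. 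Combined with the first step, this gives $D(A)=H^2(\Omega)\cap H^1_0(\Omega)$ and $A=-\Delta+1$ on all of its domain.

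The main obstacle is precisely this last step. Interior elliptic regularity — already used in the excerpt to show eigenfunctions are $C^\infty$ inside $\Omega$ — only yields $u\in H^2_{\mathrm{loc}}(\Omega)$; upgrading to $u\in H^2(\Omega)$ genuinely requires controlling $u$ near $\partial\Omega$, and that is where smoothness of the boundary is used (without it, $D(A)$ can be strictly larger than $H^2\cap H^1_0$, as happens on domains with reentrant corners). Everything else — the integration-by-parts identity and the bookkeeping with $B$ and $B^{-1}$ — is routine.
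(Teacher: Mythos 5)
Your proposal is correct and follows essentially the same route as the paper: integration by parts to show that $u\in H^2(\Omega)\cap H^1_0(\Omega)$ satisfies the weak equation defining $u=B(-\Delta u+u)$, hence lies in $\Ran(B)=D(A)$ with $Au=-\Delta u+u$, and then global elliptic regularity for the Dirichlet problem to obtain the reverse inclusion when $\partial\Omega$ is smooth. Your added remarks on why interior regularity alone does not suffice are accurate but not needed beyond what the paper states.
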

\begin{proof}
For all $u \in H^2(\Omega) \cap H^1_0(\Omega), v \in H^1_0(\Omega)$, we have
\begin{align*}
\la u,v \ra_{H^1}
& = \la -\Delta u +u,v \ra_{L^2} && \text{by parts} \\
& = \la B(-\Delta u + u),v \ra_{H^1} && \text{by definition of $B$.}
\end{align*}
Since both $u$ and $B(-\Delta u + u)$ belong to $H^1_0(\Omega)$, and $v
\in H^1_0(\Omega)$ is arbitrary, we conclude from above that $u=B(-\Delta
u + u)$. Therefore $u \in \Ran(B)=D(A)$, and so $H^2(\Omega) \cap
H^1_0(\Omega) \subset D(A)$.

Further, we find $Au=-\Delta u + u$ because $B=A^{-1}$, and so
\[
A = -\Delta + 1 \qquad \text{on $H^2(\Omega) \cap H^1_0(\Omega)$.}
\]

Finally we note that if $\partial \Omega$ is $C^2$-smooth then by elliptic regularity the weak solution $u$ of
$(-\Delta+1)u=f$ belongs to $H^2(\Omega)$, so that $\Ran(B) \subset
H^2(\Omega) \cap H^1_0(\Omega)$. Thus
\[
D(A) = H^2(\Omega) \cap H^1_0(\Omega)
\]
when $\partial \Omega$ is smooth enough. In that case $A=-\Delta+1$ on
all of its domain.
\end{proof}

\end{document}